\renewcommand{\le}{\leqslant}
\renewcommand{\ge}{\geqslant}
\newcommand{\bad}{\mathbf{Bad}}
\newcommand{\rad}{\mathrm{rad}}
\renewcommand{\cent}{\mathrm{cent}}
\newcommand{\diam}{\mathrm{diam}}
\newcommand{\AAA}{\mathcal{A}}
\newcommand{\BBB}{\mathcal{B}}
\newcommand{\PPP}{\mathcal{P}}
\newcommand{\DDD}{\mathcal{D}}
\newcommand{\RR}{\mathbb{R}}
\newcommand{\ZZ}{\mathbb{Z}}
\newcommand{\QQ}{\mathbb{Q}}
\newcommand{\NN}{\mathbb{N}}
\newcommand{\NNN}{\mathcal N}
\newcommand{\CCC}{\mathcal{C}}
\newcommand{\KKK}{\mathcal{K}}
\newcommand{\calk}{\mathcal{K}}
\newcommand{\R}{\mathbb R}
\newcommand{\WWW}{\mathcal{W}}
\newcommand{\SSS}{\mathcal{S}}
\newcommand{\vr}{\mathbf{r}}
\newcommand{\vd}{\mathbf{d}}
\newtheorem{theorem}{Theorem}[section]
\newtheorem{lemma}[theorem]{Lemma}
\newtheorem{proposition}[theorem]{Proposition}
\newtheorem{corol}[theorem]{Corollary}
\theoremstyle{remark}
\newtheorem{rem}[theorem]{Remark}
\theoremstyle{definition}
\newtheorem{example}[theorem]{Example}
\newtheorem{defn}[theorem]{Definition}
\newcommand{\HD}{{\dim_H}}
\newcommand{\butnot}{\setminus}
\newcommand{\w}{\widetilde}
\global\long\def\sub{\subseteq}
\global\long\def\subp{\supseteq}
\global\long\def\sep{\;:\;}
\global\long\def\bbr{\mathbb{R}}
\global\long\def\bbn{\mathbb{N}}
\global\long\def\calh{\mathcal{H}}
\global\long\def\calc{\mathcal{C}}
\global\long\def\calb{\mathcal{B}}
\global\long\def\cala{\mathcal{A}}
\global\long\def\br{\mathbf{r}}
\global\long\def\eps{\varepsilon}
\renewcommand{\epsilon}{\eps}
\renewcommand{\subset}{\sub}
\newcommand{\OC}[2]{(#1,#2]}
\newcommand{\smallemptyset}{{\varnothing}}
\renewcommand{\emptyset}{{\diameter}}
\tikzstyle{densely dotted}=[dash pattern=on \pgflinewidth off 1pt]
\tikzset{negated/.style={
		decoration={markings,
			mark= at position 0.5 with {
				\node[transform shape] (tempnode) {$\backslash$};}},postaction={decorate}}}
\newif\ifdraft
\newif\ifcolorcomments
\newcommand{\allowcomments}[4]{
\newcommand{#1}[1]{\ifdraft{\ifcolorcomments{ \textcolor{#4}{##1 --#3}}\else{\textsl{ ##1 \ --#3}}\fi}\else{}\fi}
}
\allowcomments{\comdzmitry}{DB}{Dzmitry}{blue}
\allowcomments{\comstephen}{SH}{Stephen}{orange}
\allowcomments{\comerez}{EN}{Erez}{red}
\allowcomments{\comdavid}{DS}{David}{Green}
\begin{document}

\title{Schmidt games and Cantor winning sets}

\author{
 Dzmitry Badziahin, Stephen Harrap, Erez Nesharim, David Simmons}
\date{}

\maketitle


\begin{abstract}
Schmidt games and the Cantor winning property give alternative notions of largeness, similar to the more standard notions of measure and category. Being intuitive, flexible, and applicable to recent research made them an active object of study. We survey the definitions of the most common variants and connections between them. A new game called the Cantor game is invented and helps with presenting a unifying framework. We prove surprising new results such as the coincidence of absolute winning and $1$ Cantor winning in metric spaces, and the fact that $1/2$ winning implies absolute winning for subsets of $\bbr$, and we suggest a prototypical example of a Cantor winning set to show the ubiquity of such sets in metric number theory and ergodic theory. 
\end{abstract}

\begin{center}
\textit{Keywords: }\text{Schmidt games; Cantor-winning; Diophantine approximation}
\end{center}

\section{Introduction and Results}\label{intro}

When attacking various
problems in
Diophantine approximation, the application of certain games has proven extremely fruitful.
Many authors have appealed to Schmidt's celebrated
game \cite{Schmidt1} (e.g. \cite{BBFKW, Dani2, EinsiedlerTseng, Farm, Fishman2, {KleinbockWeiss2}, Moshchevitin, Tseng2}), 
the \textit{absolute game} \cite{McMullen_absolute_winning} (e.g. \cite{BFKRW, CCM, FSU4, HKSW, MayedaMerrill}),
and the \textit{potential game} \cite[Appendix C]{FSU4} (e.g. \cite{AGGT, AGK, GuanYu, NesharimSimmons, Wu, Wu2}).
It is the amenable properties of the winning sets associated with games of this type that make the games such attractive tools. These properties most commonly  include (here we use winning in a loose sense to mean winning for either the $\alpha$ game, the $c$ potential game, or the absolute game):
\begin{itemize}
    \item[(W1)] A winning set is dense within the ambient space. In $\RR^N$ winning sets are thick; i.e., their intersection with any open set has Hausdorff dimension $N$.
    \item[(W2)] The intersection of a countable collection of winning sets is 
 winning.
    \item[(W3)] The image of a winning set under a certain type of mapping (depending on the game in question) is 
winning.
\end{itemize}


At the same time, many related problems in Diophantine approximation have been solved via the method of constructing certain tree-like
(or ``Cantor-type'') structures inside a Diophantine set of interest
\cite{An2, An1, BPV, BadziahinVelani, Beresnevich_BA, Davenport,
PollingtonVelani,Yang}.
One of the key ingredients in the proof of a famous Schmidt
conjecture in~\cite{BPV} was the construction of a certain generalised Cantor
set in $\RR$. The main ideas were formalised in the subsequent
paper~\cite{BadziahinVelani} and then in~\cite{BadziahinHarrap} the
theory of \textit{generalised Cantor sets} and \textit{Cantor
winning sets} was finalised in the general setting of complete
metric spaces and a host of further applications to problems in
Diophantine approximation was given.
The family of $\eps$ Cantor winning sets satisfies
properties (W1)-(W3). 

It is therefore very natural to investigate how these two
different approaches are related, and that is precisely the intention of this paper. 

\subsection{Known connections}\label{oldresults}

In \S\ref{definitions} we provide a complete set of definitions for the unfamiliar reader, along with further commentary relating to the results in this and the succeeding subsection. 

The main results of this paper can be summarised in Figure \ref{fig:flow}. In Figure \ref{fig:flow} the solid arrows represent known implications, the dashed arrows represent our new results, and a crossed arrow stands for where an implication does not hold in general.
\begin{figure}

\begin{center}
\label{fig:flow}
\begin{tikzcd}[background rectangle/.style={fill=yellow!30}, show background rectangle]
        & \text{$\frac12$ winning}
        \arrow[dd, dashed, "\, \mathbb R"] \arrow[r]
        & \text{$\alpha$ winning} \arrow[dd, leftrightarrow, dashed, negated]\\
        \text{Absolute winning} \arrow[ur, bend left] \arrow[dr, bend left]  & & \\
         & \text{$1$ Cantor winning}\arrow[r] \arrow[ul, dashed, bend left] & \text{$\varepsilon$ Cantor winning} \arrow[dl, "t", leftrightarrow, dashed, to path=|- (\tikztotarget)]\\
    \text{$0$ Potential winning} \arrow[r] \arrow[uu, leftrightarrow] & \text{$c$ Potential winning} &
    \end{tikzcd}

\end{center}
\caption{Main results}
\label{fig:flow}
\end{figure}
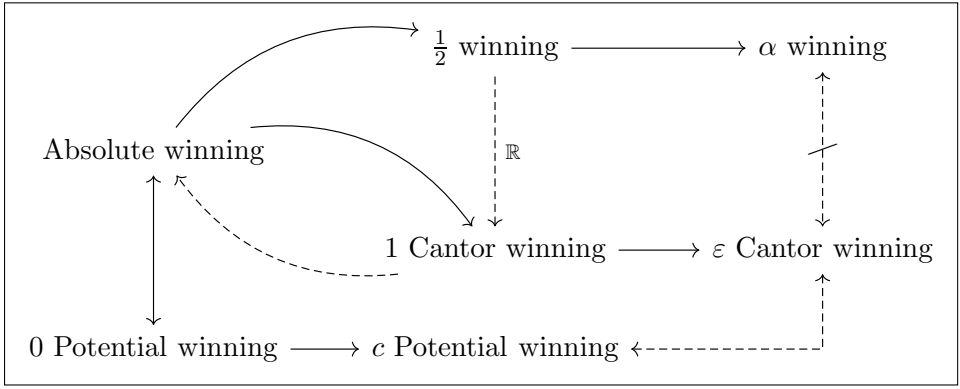


The following connections are already known:

\begin{proposition}[\cite{McMullen_absolute_winning}]  \label{mcmullen}
    If $E \sub \RR^N$ is absolute winning then $E$ is $\alpha$ winning for every $\alpha \in (0, 1/2)$.
\end{proposition}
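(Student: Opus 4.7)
The plan is to derive Alice's winning strategy in Schmidt's $(\alpha,\beta)$ game from her winning strategy in the absolute game, via a round-by-round simulation in which Bob's Schmidt moves are reinterpreted as Bob's moves in an auxiliary absolute game.

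Fix $\alpha \in (0,1/2)$ and $\beta \in (0,1)$, and pick a parameter $\beta' > 0$, lying in the admissible range for the absolute game, that satisfies
\[
\beta' \;\le\; \alpha\beta \qquad\text{and}\qquad \beta' \;\le\; 1-2\alpha.
\]
Such a $\beta'$ exists precisely because $\alpha<1/2$ makes $1-2\alpha$ positive. Since $E$ is absolute winning, Alice has a winning strategy $\sigma$ in the $\beta'$-absolute game on $E$; this is the only ingredient of the proof beyond elementary geometry.

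To play Schmidt's game, Alice maintains a simulated absolute game in parallel. When Bob plays $B_n$ of radius $r_n$, declare $B_n^{\mathrm{abs}} := B_n$ and let $A_n^{\mathrm{abs}}$ be the ball of radius at most $\beta' r_n$ that $\sigma$ prescribes for removal. Alice then chooses her Schmidt move $A_n \subset B_n$ of radius $\alpha r_n$ disjoint from $A_n^{\mathrm{abs}}$. Bob's response $B_{n+1} \subset A_n$ is then automatically legal in the absolute game: it lies in $B_n^{\mathrm{abs}} \setminus A_n^{\mathrm{abs}}$ and has radius $\alpha\beta r_n \ge \beta' r_n$. Setting $B_{n+1}^{\mathrm{abs}} := B_{n+1}$ continues the simulation; the two games produce the same nested sequence of balls, whose unique common point lies in $E$ by the winning property of $\sigma$.

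The one substantive step is the geometric one of producing $A_n$. Its centre must lie within $(1-\alpha)r_n$ of the centre of $B_n$ so that $A_n \subset B_n$, and at distance at least $(\alpha+\beta')r_n$ from the centre of $A_n^{\mathrm{abs}}$ so that $A_n$ misses it. The worst case is when $A_n^{\mathrm{abs}}$ is concentric with $B_n$, and then such a centre exists exactly when $\alpha+\beta' \le 1-\alpha$, i.e.\ $\beta' \le 1-2\alpha$. This is the only non-routine obstacle, and it is precisely where the hypothesis $\alpha<1/2$ is consumed; the same simulation collapses at $\alpha = 1/2$ because the margin $1-2\alpha$ disappears.
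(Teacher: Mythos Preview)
Your argument is correct and is the standard simulation proof; the paper itself does not prove this proposition but merely cites it from McMullen's original paper, so there is nothing to compare against here. One cosmetic point: to guarantee that $A_n$ and $A_n^{\mathrm{abs}}$ are \emph{strictly} disjoint (so that any $B_{n+1}\subset A_n$ certainly satisfies $B_{n+1}\subset B_n\setminus A_n^{\mathrm{abs}}$), you should take $\beta' < 1-2\alpha$ with strict inequality, which is harmless since $\alpha<1/2$.
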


One can actually slightly improve on this via the following folklore argument (see \cite[Proposition 2.1]{GuanYu} for a related observation):
\begin{proposition}\label{lemmacontinuity}
If $E$ is winning and $\alpha_0: = \sup\left\{\alpha:\textup{$E$ is
$\alpha$ winning}\right\}$, then $E$ is $\alpha_0$ winning.
\end{proposition}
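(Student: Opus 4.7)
The plan is to combine monotonicity of the winning property in $\alpha$ with a limiting/compactness argument.

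\emph{Monotonicity.} First I would show that if $E$ is $\alpha$-winning and $\alpha' < \alpha$, then $E$ is $\alpha'$-winning. Given $\beta' \in (0, 1)$, set $\beta := \beta' \alpha'/\alpha \in (0, 1)$, so that $\alpha \beta = \alpha' \beta'$. Let $\sigma$ be Alice's winning strategy in the $(\alpha, \beta)$-game. In the $(\alpha', \beta')$-game, given Bob's ball $B_i$, Alice queries $\sigma$ to obtain $A_i^* \subseteq B_i$ of radius $\alpha \cdot \rad(B_i)$ and then plays an arbitrary sub-ball $A_i \subseteq A_i^*$ of radius $\alpha' \cdot \rad(B_i)$. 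Bob's response $B_{i+1} \subseteq A_i \subseteq A_i^*$ has radius $\beta' \alpha' \cdot \rad(B_i) = \beta \alpha \cdot \rad(B_i) = \beta \cdot \rad(A_i^*)$, so it is a valid $(\alpha, \beta)$-move and $\sigma$ continues to apply. The resulting intersection $\bigcap B_i$ lies in $E$ by the winning property of $\sigma$.

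\emph{Attaining the supremum.} By monotonicity, $E$ is $\alpha$-winning for every $\alpha \in (0, \alpha_0)$. To promote this to $\alpha = \alpha_0$, fix $\beta_0 \in (0, 1)$ and pick $\alpha_n \nearrow \alpha_0$ with $\alpha_n > \alpha_0 \beta_0$, setting $\beta_n := \alpha_0 \beta_0 / \alpha_n \searrow \beta_0$. Then $\alpha_n \beta_n = \alpha_0 \beta_0$, so all of the approximating games share a common contraction ratio and their history trees have a uniform structure at each depth. Alice has winning strategies $\sigma_n$ in the $(\alpha_n, \beta_n)$-games; topologizing the space of strategies as functions from the history tree to balls (each ball parametrised by its centre and radius), I would apply a Tychonoff / diagonal extraction to obtain a subsequential limit strategy $\sigma$ in the $(\alpha_0, \beta_0)$-game.

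\emph{Main obstacle.} The crux of the argument is verifying that $\sigma$ is genuinely a winning strategy in the limit: namely, that for every valid play by Bob, $\bigcap B_i \in E$. This amounts to a continuity/closedness property for the winning condition under strategy limits, and requires tracking the convergence of ball centres and radii across the history tree as $n \to \infty$. In $\RR^N$ (or more generally in proper metric spaces) this can be handled via standard compactness of bounded sets of balls in the centre--radius parametrisation. In a general complete metric space, one must work a little harder---presumably by invoking compactness at each finite depth of the history tree and using completeness to realise the limiting intersection point as an element of $E$.
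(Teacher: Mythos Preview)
Your monotonicity argument is fine, but the compactness/limiting step has a genuine gap that cannot be repaired along these lines. The target set $E$ is not assumed to be closed. Even if you succeed in extracting a limit strategy $\sigma$ for the $(\alpha_0,\beta_0)$ game from the $\sigma_n$, all you can hope to show is that the outcome of any play against $\sigma$ is a limit of outcomes lying in $E$; this puts the outcome in $\overline E$, not in $E$. Since winning sets are typically not closed (the motivating example of badly approximable numbers is a countable union of nowhere dense closed sets, hence meagre and certainly not closed), this is fatal. There is also a secondary issue: the history tree has uncountably many nodes at each level, so a diagonal/Tychonoff extraction does not produce a sequentially convergent strategy in any obvious way; but even granting that, the closedness problem remains.

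The paper's proof avoids limits entirely by a direct simulation: given $\beta_0$, it manufactures a \emph{single} pair $(\alpha,\beta)$ with $\alpha<\alpha_0$ and $\alpha\beta=(\alpha_0\beta_0)^2$, and then shows that Alice can simulate one turn of the $(\alpha,\beta)$ game using two consecutive turns of the $(\alpha_0,\beta_0)$ game (on the intermediate turn she simply recentres at Bob's last centre). The specific choice of $\alpha$ in \eqref{def_ab} is arranged precisely so that the triangle-inequality checks for legality go through. Because this uses a fixed winning strategy for one $(\alpha,\beta)$ game rather than a limit of strategies, the outcome lands in $E$ exactly, with no closure taken.
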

For the sake of completeness we will prove this proposition in Section~\ref{definitions}. 
Potential winning is related to absolute winning by:
\begin{proposition}[{\cite[Theorem C.8]{FSU4}}]\label{thm:potential_implies_absolute}
    Let $X$ be a complete doubling\footnote{See Definition~\ref{def:doubling} for the definition of doubling metric spaces.} metric space. If $E \sub X$ is $c$ potential winning for every $c>0$ (i.e., $0$ potential winning), then $E$ is absolute winning. 
\end{proposition}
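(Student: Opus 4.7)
Fix a parameter $\beta \in (0, 1/3)$ for the absolute game; the goal is to exhibit a winning strategy for Alice. Choose $c > 0$ small, to be specified in terms of $\beta$ and the doubling constant of $X$. By hypothesis Alice has a winning strategy $\tau$ for the $c$-potential game with an auxiliary parameter $\beta' \le \beta$. The idea is to run $\tau$ virtually against Bob's absolute-game plays, obtaining a countable list $\mathcal{A} = \bigcup_n \mathcal{A}_n$ of balls whose removal forces the intersection point to lie in $E$, and then to schedule the removal of these balls one at a time as Alice's absolute-game holes.

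Concretely, since Bob's absolute-game sequence $B_0 \supseteq B_1 \supseteq \cdots$ with $\rad(B_{n+1}) \ge \beta \rad(B_n)$ is also a legal Bob play in the $(\beta', c)$-potential game, feeding it into $\tau$ yields at round $n$ a countable collection $\mathcal{A}_n$ satisfying $\sum_{A \in \mathcal{A}_n} \rad(A)^c \le (\beta' \rad(B_n))^c$. In particular each $A \in \mathcal{A}_n$ has radius at most $\beta' \rad(B_n) \le \beta \rad(B_n)$, so $A$ is in principle a valid absolute-game hole. Since $\tau$ is winning, the unique intersection point $x^* = \bigcap_n B_n$ lies in $E \cup \bigcup_{A \in \mathcal{A}} A$. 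In the actual absolute game, Alice maintains a queue of balls from $\mathcal{A}$ not yet removed and at each round plays one ball of radius $\le \beta \rad(B_n)$ from the queue, prioritising large radii.

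Two issues must be handled. First, a ball $A$ of radius $r$ from the potential list can only be used as an absolute-game hole once $\rad(B_n) \ge r/\beta$, so balls are processed largest-first. Second, and this is the crux, one must check that every queued ball containing $x^*$ is removed before $\rad(B_n)$ becomes too small. This is where the doubling hypothesis enters crucially: a covering argument based on the doubling constant of $X$ shows that the balls in $\mathcal{A}$ at a given scale $\approx \beta^{k} \rad(B_0)$ that can intersect $B_n$ are finite in number, with a bound depending only on $c$, $\beta$, and the doubling constant (not on $n$). Choosing $c$ sufficiently small keeps this count compatible with the number of absolute-game rounds available while $\rad(B_n)$ lives at that scale.

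The main obstacle is precisely this quantitative calibration: matching the doubling-based per-scale count of queued balls to the geometric ``budget'' of absolute-game rounds at each scale, in such a way that the queue never runs over. Once this is arranged, every ball of $\mathcal{A}$ meeting $x^*$ is eventually excised by one of Alice's holes; from that round onwards, Bob's balls avoid it. Consequently $x^* \notin \bigcup_{A \in \mathcal{A}} A$, so by the potential-winning condition $x^* \in E$, and Alice wins the $\beta$-absolute game.
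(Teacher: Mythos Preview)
The paper does not prove this proposition; it is quoted from \cite[Theorem C.8]{FSU4} without argument, so there is no in-paper proof to compare against. Evaluating your plan on its own merits:

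The architecture---run a $(c,\beta')$ potential strategy along Bob's absolute-game play and convert its output into absolute-game holes---is the natural one, and you are right that every $A$ produced has $\rad(A)\le\beta'\rho_n\le\beta\rho_n$, hence is individually a legal hole. The gap is in the scheduling step, which you flag as ``the main obstacle'' but do not actually resolve. If Bob plays the extremal sequence $\rho_n=\beta^n\rho_0$, the constraint $\sum_{A\in\mathcal A_{m+1}}\rad(A)^c\le(\beta'\rho_m)^c$ permits $\mathcal A_{m+1}$ to contain on the order of $\beta^{-(k-m)c}$ balls of scale $\beta^k\rho_0$; summing over $m\le k$ gives roughly $\beta^{-kc}/(\beta^{-c}-1)$ balls at scale $k$, and this diverges with $k$ for \emph{every} fixed $c>0$. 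Alice, however, has only $O(1)$ absolute-game rounds while $\rho_n$ sits at scale $k$, and only $k$ rounds in total up to that point, so a largest-first queue cannot be cleared. Your appeal to doubling does not rescue this: doubling bounds the number of \emph{disjoint} balls of a given scale meeting $B_n$, but nothing prevents the potential strategy from outputting many overlapping balls at the same scale, all containing the eventual outcome $x^*$ and hence all intersecting every $B_n$.

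What is missing is a different control mechanism, not a finer bookkeeping of the same one. The conversions in this paper from potential winning to other games (Theorems~\ref{thm:potentialCantor} and~\ref{potentialschmidt}) work by tracking a potential function $\varphi$ built from the $c$-th powers of the radii and choosing Alice's move to force a geometric decay of $\varphi$; the small balls are never individually scheduled, they are dominated in aggregate. A correct proof of Proposition~\ref{thm:potential_implies_absolute} needs an analogous device: Alice deletes the single hole that most reduces such a potential, and doubling enters by guaranteeing that some legal hole achieves a definite multiplicative drop---not by bounding the raw cardinality of a queue.
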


Meanwhile,
absolute winning
is
related to Cantor winning
in the following way:

\begin{proposition}[{\cite[Theorem 12]{BadziahinHarrap}}]\label{BHabs}
    Absolute winning subsets of $\RR^N$ are $1$ Cantor winning.
\end{proposition}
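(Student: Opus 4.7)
The plan is to simulate Alice's absolute winning strategy in parallel at every branch of a Cantor-like construction. Fix a ball $B_0 \subseteq \RR^N$ and a large integer $R$. Since $E$ is absolute winning, Alice has a strategy $\sigma$ which wins the $\beta$-absolute game for every $\beta > 0$, and I will take $\beta := 1/(3R)$ (any $\beta$ slightly smaller than $R^{-1}$ will do). Fix a family of grids on $\RR^N$, one for each scale $R^{-n} r(B_0)$, and declare the \emph{grid-children} of a ball $B$ of radius $R^{-n} r(B_0)$ to be those grid balls of radius $R^{-n-1} r(B_0)$ contained in $B$; there are on the order of $R^N$ such grid-children.

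I construct the Cantor sequence $(\mathcal{B}_n)_{n \geq 0}$ inductively, with $\mathcal{B}_0 = \{B_0\}$. Each ball $B \in \mathcal{B}_n$ comes equipped with a history $B_0 \supseteq B^{(1)} \supseteq \cdots \supseteq B^{(n)} = B$ of legal Bob-moves in the $\beta$-absolute game. I feed this history to Alice's strategy $\sigma$ to obtain a ball $A(B)$ with $r(A(B)) \le \beta \, r(B) = r(B)/(3R)$, and I take $\mathcal{B}_{n+1}$ to be the collection of all grid-children of any $B \in \mathcal{B}_n$ which do not meet $A(B)$. Each such kept grid-child is itself a legal next Bob-move after $B$, because its radius equals $R^{-n-1} r(B_0) \ge \beta \, r(B)$ and it avoids $A(B)$.

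Two observations then finish the argument. First, the removed ball $A(B)$ has radius at most $r(B_0)/(3R^{n+1})$, which is strictly smaller than the grid-child radius $r(B_0)/R^{n+1}$; therefore $A(B)$ can meet at most a dimension-dependent constant $c_N$ of the grid-children of $B$. Consequently, at each level I discard at most $c_N$ children per parent, which is bounded by a constant (i.e., by $R^{N(1-1)}$ up to the implicit constant) and matches the $1$ Cantor winning requirement. Second, every point $x \in \bigcap_n \bigcup \mathcal{B}_n$ is the limit of a unique nested sequence $B_0 \supseteq B^{(1)} \supseteq B^{(2)} \supseteq \cdots$ which is a full legal play of the $\beta$-absolute game in which Alice followed her winning strategy $\sigma$; hence $x \in E$. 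Combined, these give $\bigcap_n \bigcup \mathcal{B}_n \subseteq E$, as required.

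The main technical point to verify carefully is the bookkeeping between the metric constraints of the absolute game (Alice's removed ball has radius $\le \beta \, r(B)$, Bob's next ball has radius $\ge \beta \, r(B)$) and the fixed scaling ratio $R^{-1}$ of the Cantor construction. Taking $\beta$ strictly less than $R^{-1}$ — for example $\beta = 1/(3R)$ — simultaneously makes every grid-child a legal Bob-move, guarantees that only a bounded number of grid-children per parent are touched by $A(B)$, and leaves ample slack in Alice's choice to remain consistent with $\sigma$. This is the step where one must be most careful; once these parameters are aligned, the rest of the argument is the direct translation above.
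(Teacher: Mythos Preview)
Your argument is correct and is essentially the direct proof given in the cited reference \cite{BadziahinHarrap}. The present paper does not supply its own proof of this proposition; it quotes it as a known result and then recovers it by a different route, namely as a consequence of Theorem~\ref{potentialequiv} and Proposition~\ref{thm:potential_implies_absolute} (absolute winning $\Rightarrow$ $0$ potential winning trivially, and $0$ potential winning $\Rightarrow$ $1$ Cantor winning via the backward direction of Theorem~\ref{potentialequiv}). So your approach is the elementary, self-contained one, while the paper's machinery absorbs this statement into the broader equivalence of Corollary~\ref{CWabs}; the latter buys the converse and the connection to the potential game at the cost of more infrastructure.

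Two small points to tighten. First, your ``family of grids'' should simply be the standard splitting structure $\SSS(B,R)$ on $(\RR^N,\|\cdot\|_\infty)$ used in Definition~\ref{def:cantorWinning}; otherwise the resulting construction is not literally a $(B_0,R,\vr)$ Cantor set in the sense required. Second, the sentence about $c_N$ being ``bounded by $R^{N(1-1)}$ up to the implicit constant'' is misleading, since $R^{N(1-1)}=1$. The correct statement is that for every $\eps<1$ one may choose $R_\eps$ so large that the fixed constant $c_N$ (at most $2^N$, since $A(B)$ has diameter less than the side length of a grid-child) satisfies $c_N\le R^{N(1-\eps)}$ for all $R\ge R_\eps$; this is exactly what Definition~\ref{def:cantorWinning} asks for $\eps_0=1$.
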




\subsection{New results}\label{newresults}

In some sense, the above statements suggest that the construction of Cantor winning sets mimics the playing of a topological game.  In \S\ref{sec:cantor_game} we introduce a new game called the ``Cantor game''. We  prove that its associated winning sets are exactly the Cantor winning sets (cf. Theorem \ref{thm:potentialCantor}); to do this we utilise ideas related to the potential game. This serves as a justification for the term ``Cantor winning''.

Next, we prove a
connection between Cantor winning and potential winning, and, in particular, provide a converse to Proposition~\ref{BHabs}:

\begin{theorem}\label{potentialequivR}
A set $E \sub \RR^N$ is $\eps$ Cantor winning if and only if it is
$N(1-\eps)$ potential winning. In particular, $E$ is Cantor winning if and only if $E$ is potential winning, and if $E\sub \RR^N$ is $1$ Cantor winning then it is absolute winning.
\end{theorem}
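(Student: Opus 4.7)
My plan is to reduce the theorem to the Cantor game equivalence (Theorem~\ref{thm:potentialCantor}), which identifies $\varepsilon$ Cantor winning sets with winning sets of an appropriately defined ``$\varepsilon$ Cantor game''. With this identification in hand, both directions of the equivalence with $N(1-\varepsilon)$ potential winning amount to translating strategies between two topological games played on nested families of balls in $\RR^N$, and the heart of the argument is a volume count that matches the two winning parameters.

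For the implication ``$\varepsilon$ Cantor winning $\Rightarrow$ $N(1-\varepsilon)$ potential winning'', I would take Alice's winning Cantor strategy and convert each of Bob's potential-game moves on a ball $B$ of radius $r$ into a Cantor move at the generation of scale $R^{-n} \approx \beta r$ inside $B$, declaring the Cantor-removed sub-balls to be Alice's obstacles. The key verification is that the total $c$-potential $\sum r_i^c$ with $c = N(1-\varepsilon)$ is bounded by the required budget. In $\RR^N$ the number of candidate sub-balls of radius $R^{-n}$ inside $B$ scales like $(rR^{n})^{N}$, and the Cantor strategy removes a fraction such that the surviving tree has Hausdorff dimension at least $N\varepsilon$; matching the Hausdorff $c$-measure scaling against this ball count forces exactly $c = N(1-\varepsilon)$, so that the per-move contributions can be absorbed into a single fixed potential budget. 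For the converse, I would reverse the procedure: given an $N(1-\varepsilon)$-potential strategy, I build a Cantor structure along the full $R$-adic tree, converting the potential budget into a count of admissible removals per parent ball at each generation, exploiting the fact that at a fixed generation $n$ all radii equal $R^{-n}$, so $\sum r_i^c$ collapses directly to a ball count.

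The main obstacle is the scheduling mismatch between the two games: a potential-game play is a single sequence of nested balls chosen adversarially by Bob, whereas a Cantor-game play processes an entire generation at once. I expect to handle this by running many copies of the potential game in parallel along the branches of the Cantor tree, aggregating the resulting obstacles into one coherent Cantor strategy, and symmetrically simulating the Cantor strategy by a depth-first walk in the potential game. The key technical care is that the strategies produced must be legitimate (independent of future Bob-moves) and that potential budgets accumulated across different scales must be distributed among Cantor generations without double-counting; this is where the specific shape of the ``$\varepsilon$ Cantor game'' from Section~\ref{sec:cantor_game} should allow a clean bookkeeping.

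Finally, the closing assertion is immediate from the equivalence just established: if $E \sub \RR^N$ is $1$ Cantor winning, then $E$ is $N(1-1) = 0$ potential winning, i.e.\ $c$ potential winning for every $c > 0$. Since $\RR^N$ is a complete doubling metric space, Proposition~\ref{thm:potential_implies_absolute} applies and yields that $E$ is absolute winning.
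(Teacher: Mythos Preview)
Your plan is essentially correct and close in spirit to the paper's, but there are two differences worth noting.

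First, the paper does not route through Theorem~\ref{thm:potentialCantor} at all. Instead it proves a more general statement (Theorem~\ref{potentialequiv}) for any splitting structure on a doubling metric space, and Theorem~\ref{potentialequivR} is the specialisation to the standard structure on $\RR^N$ with $\delta = N$. Second, and more substantively, your forward direction is phrased as a dynamic simulation of the Cantor \emph{game}: given Bob's potential-game ball, you want to locate a nearby structure ball, feed it to Alice's Cantor-game strategy, and output her removed sub-balls. This works in principle but forces you to manage histories for Cantor-game moves that do not naturally arise from the potential-game play (Bob's balls need not be aligned with the splitting structure), and you have not said how. The paper sidesteps this entirely: it fixes, once and for all, a single $(B_0,R,\vr)$ Cantor set $\KKK \sub E$ with $r_{m,n} = R^{N(n-m+1)(1-\eps)}$, and Alice's potential-game strategy on turn $k$ is simply to delete all balls in $\bigcup_{n\ge m}\AAA_{m,n}$ that meet Bob's current ball, where $m$ is determined by $\rad(D_k)$. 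No game simulation is needed; the legality check \eqref{NTS} is exactly the volume count you describe, and the doubling constant $C_1$ handles the misalignment. Your backward direction (run the potential game in parallel along the $R$-adic tree, aggregate Alice's deletions into the $\AAA_{m,n}$) is precisely what the paper does, including your observation that one must track histories because the potential game is not positional.

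Your derivation of the absolute winning conclusion from Proposition~\ref{thm:potential_implies_absolute} is exactly how the paper closes.
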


\begin{rem}\label{rem:barak}
It is proved in \cite[Theorem 5.2]{BadziahinHarrap} that hyperplane absolute winning sets (cf. \S\ref{sec:absgame} for the definition) in $\bbr^N$ are $\frac{1}{N}$ Cantor winning, however, the converse is not true. It is possible to develop a theory of generalised Cantor sets in which the removed sets come from neighborhoods of hyperplanes and then use the hyperplane potential game (cf. \cite[Appendix C]{FSU4} or \cite{NesharimSimmons} for the definition) to state a complete analogue of Theorem \ref{potentialequivR}. This will not be pursued in this note.
\end{rem}

The connection between winning sets for Schmidt's game and Cantor winning sets is more delicate, as demonstrated by the existence of the following counterexamples (which are given explicitly in \S\ref{counters}):
\begin{theorem}\label{theoremPWnotW}
There is a Cantor winning set in $\RR$ that is not winning.
\end{theorem}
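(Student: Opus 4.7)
The plan is to exhibit an explicit set $E \subset \RR$ that is $\eps$-Cantor winning for some $\eps > 0$, yet for which Bob has a winning strategy in Schmidt's $(\alpha,\beta)$-game for every $\alpha \in (0,1/2)$. The idea exploits a structural difference between the two games: the Cantor game operates along a fixed geometric sequence of scales, while Schmidt's game allows Bob's scale of engagement to be set freely via his choice of $\beta$. One can therefore build $E$ with dense Cantor-like branching at specific scales $R^{-n}$ while leaving exploitable ``gaps'' at intermediate scales.

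Concretely, fix a large integer $R$ and some $\delta \in (0,1)$ (close to $1$ for a weak but valid Cantor winning structure). Build $E$ recursively: starting from $\mathcal{E}_0 = \{[0,1]\}$, at level $n$ and inside every surviving parent interval $J$ of length $R^{-n}$, pack $\lceil R^{1-\delta} \rceil$ children intervals of length $R^{-n-1}$ tightly inside a single sub-interval of $J$ of length $R^{-n-\delta}$. Alternate the position of this cluster within the parent (for instance between leftmost and rightmost positions) at consecutive levels so that $E := \bigcap_n \bigcup \mathcal{E}_n$ does not concentrate at a single point. The $\eps$-Cantor winning property with $\eps = 1-\delta$ then follows directly by inspection from the branching structure: each parent spawns approximately $R^{1-\delta}$ children, which is what the Cantor winning definition requires.

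To prove $E$ is not $\alpha$-winning, for each $\alpha \in (0,1/2)$ I would specify a $\beta \in (0,1)$ and describe Bob's strategy in the $(\alpha,\beta)$-game. Bob's plan is, over a bounded number of rounds (depending on $\alpha$), to manoeuvre play into the large gap inside the current parent interval --- a region of relative size close to $1$ that is disjoint from $E$. Once Bob's ball lies in this gap, all subsequent balls stay within it, so the limit point avoids $E$. The main obstacle will be making this strategy succeed against every $\alpha > 0$ with a \emph{single} fixed choice of $E$; this is handled by letting the construction parameter vary with the level, choosing $\delta_n \to 1$ so that at sufficient depth the relative gap size becomes arbitrarily close to $1$. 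Quantitative bookkeeping then shows that for every $\alpha > 0$ there is a level beyond which Alice's forced ball-radius is dominated by the gap width, and from that level Bob executes his gap-placement strategy to win.
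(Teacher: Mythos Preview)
Your construction rests on a misreading of Definition~\ref{def:cantorWinning}. A set $E$ is $\eps$ Cantor winning if, for \emph{every} ball $B_0$ and every sufficiently large $R$, the set $E$ \emph{contains} a $(B_0,R,\vr)$ Cantor set with $r_{m,n}\le f(R)^{(n-m+1)(1-\eps)}$. The quantity $r_{m,n}$ bounds the number of children that are \emph{removed} from each level-$m$ parent, not the number that survive. Your sentence ``each parent spawns approximately $R^{1-\delta}$ children, which is what the Cantor winning definition requires'' has this backwards: to be $\eps$ Cantor winning (with the local parameters $r_{n,n}$) one must keep at least $R-R^{1-\eps}$ of the $R$ children, not merely $R^{1-\delta}$ of them.

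More seriously, your $E=\bigcap_n\bigcup\mathcal E_n$ is a single nowhere-dense Cantor set supported in $[0,1]$. Such a set cannot be Cantor winning: by (W1) (or Theorem~\ref{propositionfullHD} combined with Theorem~\ref{potentialequivR}) every Cantor winning subset of $\RR$ is dense and has Hausdorff dimension $1$, whereas your set is nowhere dense and has dimension at most $1-\delta$ (or, in the variable-$\delta_n$ version, still at most $\liminf_n(1-\delta_n)<1$). Equivalently, on any ball $B_0$ disjoint from $[0,1]$ your set contains no generalised Cantor set at all. The paper's example goes in the opposite direction: it takes $E$ to be the \emph{complement} of a countable union of small IFS limit sets, so that $\RR\setminus E$ has dimension $\le 1/2$; Proposition~\ref{propositionfirstturn} then makes $E$ potential (hence Cantor) winning, while Bob defeats Alice by steering play into one of those limit sets. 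Any repair of your approach must produce a set that is dense and of full dimension, which means the ``large gap'' mechanism you rely on for Bob cannot survive in the naive form you describe.
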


\begin{theorem}\label{theoremWnotPW}
There is a winning set in $\RR$ that is not Cantor winning.
\end{theorem}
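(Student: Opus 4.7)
The plan is to construct an explicit set $E \subset \RR$ that is $\alpha$-winning in Schmidt's game for some $\alpha \in (0,1/2)$, but is not $\eps$-Cantor winning for any $\eps > 0$. By Theorem~\ref{potentialequivR}, this is equivalent to showing $E$ is not $c$-potential winning for any $c \in (0,1)$. The guiding principle is the structural asymmetry between the two games: in Schmidt's game Alice picks the actual next ball and so needs only one ``good'' sub-ball at every scale, whereas in the potential game she merely removes balls subject to a $c$-diameter budget while Bob retains the choice of next ball, and so she must effectively \emph{cover} the complement of $E$ inside Bob's ball. A set exhibiting only a single winning thread at each scale can thus be Schmidt winning yet fail potential winning for every $c < 1$.

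Concretely, I would build $E$ inductively along a very rapidly decaying sequence of scales $(r_k)_{k \ge 1}$, carving out nested concentric intervals $I_0 \supseteq I_1 \supseteq \cdots$ with $|I_k| = r_k$. The set $E$ is defined so that inside each annulus $I_{k-1}\setminus I_k$ it consists of a suitably scaled copy of the badly approximable set $\bad$ (which is $\frac12$-winning in $\RR$), while inside $\bigcap_k I_k$ the set $E$ is reduced to a thin self-similar thread of Hausdorff dimension zero. Alice's Schmidt strategy then splits into two regimes: within an annulus she runs the classical $\frac12$-winning strategy for the scaled $\bad$ copy there; if Bob's ball commits to the locked scales $r_k$, she redirects into $I_k$, where the construction repeats by self-similarity. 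Provided $(r_k)$ decays fast enough that scale transitions are rare in logarithmic time, a single $\alpha > 0$ suffices, and $E$ is Schmidt winning.

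On the potential side, Bob's strategy is to commit to the tower $I_1 \supseteq I_2 \supseteq \cdots$ and aim at a point of $\bigcap_k I_k \setminus E$. Inside each $I_{k-1}$, $E$ consists only of a measure-zero scaled $\bad$-copy plus the sub-tower $I_k$; so for Alice to force Bob into $E$ she must cover most of $I_{k-1} \setminus I_k$. Since $\bad$ has Lebesgue measure zero, a single-ball optimisation shows that the maximum total diameter Alice can cover within her round-$k$ budget $\eta\, r_{k-1}^c$ is at most $2\eta^{1/c} r_{k-1}$, which for $\eta < 1/2$ is strictly less than $r_{k-1}$ \emph{uniformly} in $c \in (0,1)$; consequently a sub-interval of length at least $r_{k-1}/4$ remains available in every round and Bob can descend the tower indefinitely. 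The main obstacle is the simultaneous calibration of $(r_k)$: it must decay fast enough that Bob's single strategy defeats Alice uniformly in $c$ (including the case $c \to 1^-$, where Alice's budget is most generous), yet slowly enough that the scale transitions in Alice's Schmidt strategy can be absorbed into a single $\alpha$. A diagonal choice such as $r_k = \alpha^{k!}$ is the natural candidate, and the fine interplay between the Schmidt strategy's density requirements and the potential strategy's sparsity requirements is where the substantive technical work lies.
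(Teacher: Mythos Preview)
Your approach has a genuine gap in the potential-game half. You argue that Bob can descend the tower $(I_k)$ because ``Alice must cover most of $I_{k-1}\setminus I_k$'' and her diameter budget $2\eta^{1/c}r_{k-1}$ is too small. But this mis-reads the potential game: Alice does not have to cover the complement of $E$; she plays \emph{adaptively} over infinitely many rounds and only needs the outcome to land in $E$ or in one of her removed balls. Since each annulus carries a scaled copy of $\bad$, and $\bad$ is absolute winning (hence $c$-potential winning for every $c>0$), Alice inherits a perfectly good local potential strategy in every annulus. The only obstruction would be the single limit point $\bigcap_k I_k$ and the countably many annulus boundaries, and Alice can delete tiny neighbourhoods of all of these on her first turn at arbitrarily small $c$-cost. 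In particular your proposed Bob strategy (``aim at a point of $\bigcap_k I_k\setminus E$'') is defeated immediately: $\bigcap_k I_k$ is one point, and once Alice removes a ball around it the outcome there is a default win for her. So the set you build is, in all likelihood, Cantor winning after all---using $\bad$ as the filling material is fatal precisely because $\bad$ already enjoys the strongest winning property.

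The paper's proof proceeds quite differently. It does not try to defeat Alice in the potential game directly. Instead it constructs $E\subset\RR$ from binary expansions whose set of $1$-digits contains a Bohr set, shows $E$ is $1/3$ winning, and then shows that $\RR\setminus E$ is \emph{weak} $1/3$ winning (via a decomposition into countably many \emph{finitely} weak winning pieces). The punchline then comes from Corollary~\ref{theoremPWWintersection}: in an Ahlfors regular space, a potential winning set meets every weak winning set in a set of full Hausdorff dimension, so if $E$ were potential winning, $E\cap(\RR\setminus E)=\emptyset$ would have dimension $1$. The moral is that exhibiting non-Cantor-winning directly is hard; the paper instead shows the \emph{complement} is large in the weak-game sense and lets the structural intersection theorem do the work.
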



However,
a partial result connecting Cantor winning sets and winning sets when some restrictions are placed on the parameters
is possible:


\begin{theorem}\label{potentialschmidtRN}
Let $E\sub \RR^N$ be an $\eps$ Cantor winning set. Then for every $c >
N(1-\eps)$, there exists $\gamma > 0$ such that for all $\alpha,\beta > 0$
with $\alpha < \gamma(\alpha\beta)^{c/N}$, the set $E$ is $(\alpha,\beta)$
winning.
\end{theorem}


Even though Cantor winning does not imply winning,
the intersection of the corresponding sets has full Hausdorff dimension:
\begin{theorem}\label{theoremPWWintersectionR}
    The intersection  in $\RR^N$ of a winning set for Schmidt's game and a Cantor winning set has Hausdorff dimension $N$.
\end{theorem}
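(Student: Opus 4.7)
By Theorem~\ref{potentialequivR}, the hypothesis that $C \subseteq \RR^N$ is Cantor winning is equivalent to $C$ being $c$-potential winning for some $c < N$. Let $\alpha_W \in (0, 1/2]$ be such that Alice has a winning strategy $\sigma_W$ in Schmidt's $(\alpha_W, \beta)$-game on $W$ for every $\beta \in (0,1)$, and let $\sigma_C$ denote Alice's winning strategy in the $c$-potential game on $C$. Fix an arbitrary ball $B_0 \subset \RR^N$. The plan is to play these two games in parallel against a heavily branching Bob, building a Cantor subset of $(W \cap C) \cap B_0$ whose Hausdorff dimension tends to $N$ as a parameter $\beta \to 0$.

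The construction mimics the standard Cantor-set proof that a Schmidt winning set has full Hausdorff dimension. Fix $\beta > 0$ small, and at level $k$ set $r_k := \alpha_W(\alpha_W\beta)^k\rad(B_0)$. Inductively, build a finite family $\mathcal T_k$ of pairwise-disjoint closed balls of radius $r_k$, each arising as Alice's Schmidt response in a consistent parallel play of the two games. Given $A \in \mathcal T_k$, enumerate a maximal family of at least $c_N \beta^{-N}$ disjoint candidate sub-balls $B' \subset A$ of radius $\beta r_k$ (with $c_N > 0$ a dimensional packing constant); these are Bob's available next moves in the Schmidt game. Discard any $B'$ intersecting a ball emitted by $\sigma_C$ along the associated play through level $k+1$, and for each surviving $B'$ append $\sigma_W(B') \subset B'$ of radius $\alpha_W \beta r_k = r_{k+1}$ to $\mathcal T_{k+1}$.

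The crux is showing most candidates survive. The potential-game constraint bounds the total $c$-weight of $\sigma_C$'s removals for any given branch by $\kappa r_k^c$, for some constant $\kappa$ depending only on the auxiliary potential-game parameter. A standard covering estimate, separating removal balls of radius above and below $\beta r_k$ and using the inequality $c < N$ to bound $\sum_i s_i^N \leq r_k^{N-c}\sum_i s_i^c$ for the large ones, shows the number of affected candidates is $O(\beta^{-c}) = o(\beta^{-N})$. Hence at least $\tfrac12 c_N \beta^{-N}$ candidates survive per parent, and we may continue the construction indefinitely.

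The limiting set $K := \bigcap_k \bigcup_{A \in \mathcal T_k} A$ lies in $W$ by consistency with $\sigma_W$ and in $C$ by consistency with $\sigma_C$. Applying the mass distribution principle to the natural uniform measure on $K$ yields
\[
\dim_H K \;\geq\; \frac{\log\bigl(\tfrac12 c_N \beta^{-N}\bigr)}{\log\bigl(1/(\alpha_W \beta)\bigr)} \;\longrightarrow\; N \quad \text{as } \beta \to 0,
\]
so $\dim_H(W \cap C) = N$. The principal difficulty is handling small-radius removal balls in the counting estimate, since their cardinality is not controlled directly by the $c$-weight bound; one must pre-process $\sigma_C$ to coalesce them, or else treat them via a separate geometric argument. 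This is precisely the step where the strict inequality $c < N$ (equivalently, $\eps > 0$) is essential.
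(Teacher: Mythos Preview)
Your direct Cantor-construction approach is genuinely different from the paper's proof, and it can be made to work, but the sketch as written has a real gap precisely where you flag it, and the proposed ``coalescing'' fix is not the right repair.

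\textbf{The paper's route.} The paper does not build a Cantor set inside $W\cap C$ directly. Instead it argues by duality and contradiction (Corollary~\ref{theoremPWWintersection}): first, Theorem~\ref{potentialschmidt} shows that a $c_0$ potential winning set is $(\alpha,\beta)$ \emph{very strong} winning for suitable parameters, which via Proposition~\ref{prop_enote} forces its complement not to be weak winning. Then, if $\dim_H(W\cap C)<N$, Proposition~\ref{propositionfirstturn} makes $\RR^N\setminus(W\cap C)$ potential winning; intersecting with $C$ shows $C\setminus W$, and hence $\RR^N\setminus W$, is potential winning, so $W$ cannot be (weak) winning --- a contradiction. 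No branching tree, no small-ball bookkeeping.

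\textbf{The gap in your sketch.} Two related issues:
\begin{itemize}
\item Your claim that the total $c$-weight of $\sigma_C$'s removals along a branch through level $k$ is $\le \kappa r_k^c$ does not follow from the per-turn constraint~\eqref{eq:potentialLegal}: summing over turns gives only $O(r_0^c)$, a fixed constant, while your candidate balls have radius $\beta r_k\to 0$. Without further argument the ratio blows up.
\item Even at a single turn, the constraint~\eqref{eq:potentialLegal} bounds $\sum_k s_k^c$, not the number of removal balls; there can be infinitely many small ones, each killing a distinct candidate $B'$. Enlarging or ``coalescing'' these balls need not preserve legality of Alice's move, since the $c$-weight of a covering ball can vastly exceed the sum of the $c$-weights it replaces.
\end{itemize}

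\textbf{How to fix it.} The correct mechanism is exactly the potential function used in the proofs of Theorems~\ref{thm:potentialCantor} and~\ref{potentialschmidt}: to each candidate $B'$ assign
\[
\varphi(B')=\sum_{\substack{j\le k}} \sum_{\substack{A\in\AAA_j \\ A\cap B'\neq\emptyset}} \rad(A)^c,
\]
prove inductively that $\varphi$ of the parent is $\le (\eps r_k)^c$ for small fixed $\eps$, observe that once this holds every intersecting removal ball is smaller than the candidate spacing so that $\sum_{B'}\varphi(B')\le \varphi(\text{parent})$, and then discard the $\lfloor\beta^{-c}\rfloor$ candidates of largest potential rather than all candidates touching a removal. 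This gives the geometric decay of $\varphi$ that makes your bound $\kappa r_k^c$ true and guarantees survival of $\gtrsim \beta^{-N}-\beta^{-c}$ candidates per parent. With this modification your argument goes through and indeed yields an explicit Ahlfors-regular subset of $W\cap C$ with dimension arbitrarily close to $N$ --- something the paper's contradiction argument does not produce.
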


That being said, we prove a
surprising
result regarding subsets of the real line:

\begin{theorem}\label{thmwinningmainN}
    If a set $E \sub \RR$ is $1/2$ winning then it is $1$ Cantor winning.
\end{theorem}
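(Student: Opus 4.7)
The plan is to use Alice's $(1/2,\beta)$-winning strategy iteratively along all possible Bob plays to construct a generalized Cantor set of appropriate parameters inside $E$. Fix an initial ball $B_0 \sub \RR$ of radius $r_0$, let $R$ be a large integer, and set $\beta = 1/R$. Let $\sigma$ denote Alice's $(1/2,\beta)$-winning strategy for $E$, which exists by hypothesis.

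Build a tree of nested intervals inductively. At the root, $\sigma$ produces $A_0 = \sigma(B_0) \sub B_0$ of radius $r_0/2$, hence of length $r_0$. Inside $A_0$ select $R$ disjoint closed sub-intervals $B_1^{(1)},\ldots,B_1^{(R)}$ each of length $\beta r_0$ (possible because their total length is exactly $r_0$). Each $B_1^{(k)}$ is a legal Bob response to $A_0$, so $\sigma$ produces Alice's intervals $A_1^{(k)} \sub B_1^{(k)}$. In general, having defined $A_n^{\mathbf{j}}$ for each $\mathbf{j} \in \{1,\ldots,R\}^n$, select $R$ disjoint closed sub-intervals of $A_n^{\mathbf{j}}$ to serve as $B_{n+1}^{(\mathbf{j},1)},\ldots,B_{n+1}^{(\mathbf{j},R)}$, each of them a legal next Bob move, and let $\sigma$ define $A_{n+1}^{(\mathbf{j},k)} \sub B_{n+1}^{(\mathbf{j},k)}$. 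Every infinite branch $(j_1,j_2,\ldots) \in \{1,\ldots,R\}^{\NN}$ corresponds to a legal $(1/2,\beta)$-play in which Alice uses $\sigma$, so the unique point $\bigcap_n A_n^{(j_1,\ldots,j_n)}$ lies in $E$.

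This construction realizes a generalized Cantor set $K \sub E \cap B_0$ with branching $R$ at each level and balls of radius $\sim r_0 (2R)^{-n}$ at level $n$, which (after absorbing the factor of $2$ into the scaling constant) matches the structure of a $1$-Cantor-winning generalized Cantor set at scale $R$. To accommodate the adversarial removals tolerated by the $\varepsilon=1$ Cantor winning definition, one oversamples: select $CR$ sub-intervals per parent for a suitable constant $C>1$ (achievable by taking $\beta$ slightly smaller than $1/R$), so that even after the adversary deletes its permitted quota at each level, at least $R$ survivors remain to continue the induction; since $\sigma$ is defined on every branch of the full oversampled tree, all survivors continue to produce points of $E$. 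The main obstacle is verifying the exact parameter match between the output of this construction and the formal definition of $\varepsilon=1$ Cantor winning; in particular, the argument uses the one-dimensionality of $\RR$ essentially, through the equality (up to constants) between the length of Alice's interval $A_n^{\mathbf{j}}$ and the total length of the $R$ disjoint children it can accommodate. In $\RR^N$ with $N>1$ the analogous strategy would yield only $R$ children per parent, falling well short of the $R^N$ that would be required for $1$ Cantor winning in higher dimensions, consistent with the theorem being stated only for subsets of $\RR$.
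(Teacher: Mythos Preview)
Your construction does produce a rich tree of game-outcomes inside $E$, but it does not yield a generalized Cantor set in the sense required by the definition. A $(B_0, R, \vr)$ Cantor set is built from the splitting structure: the collections $\BBB_n$ must be subsets of $\frac{1}{R^n}\{B_0\} = \SSS(B_0, R^n)$, i.e., the intervals at level $n$ are the specific grid sub-intervals obtained by cutting $B_0$ into $R^n$ equal pieces. Your intervals $B_n^{\mathbf{j}}$ are instead positioned wherever Alice's strategy happens to place $A_{n-1}^{\mathbf{j}}$, and there is no reason these should align with the grid. Absorbing constants or oversampling does not address this: the factor of $2$ is a red herring, and the real obstacle is positional, not numerical. (Your remark on ``adversarial removals'' also reflects a misreading of the definition: there is no adversary in the Cantor winning property; one must exhibit a single Cantor set in $E$ by choosing which grid intervals to discard, few enough in number.)

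The paper's proof confronts exactly this alignment problem, and the key idea is a covering lemma that is specific to $\alpha = 1/2$. Since Alice's response $F(B)$ has half the radius of $B$, it must contain the center of $B$; hence if Bob ranges over all balls of a fixed radius centered in a grid interval $b$, the resulting Alice-responses form a cover of $b$ by intervals of the same length as $b$. A short pigeonhole argument then shows that two such Alice-balls suffice to cover $b$ up to a gap of length $\leq \rad(b)/R$. Consequently all but at most $10$ of the grid children of $b$ lie well inside one of these two Alice-balls, and for each such child one records the corresponding Bob move as the next term of an $F$-chain. Iterating, every surviving grid interval at level $n$ is contained in the last ball of an $F_n$-chain, so the limit set---a genuine $(B_0, R, 10)$ local Cantor set---lies in $E$. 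This covering step is where one-dimensionality and the special value $\alpha=1/2$ are really used; your argument exploits only that an interval of length $\ell$ packs $R$ sub-intervals of length $\ell/R$, which at best recovers Schmidt's full-dimension theorem, not Cantor winning.
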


In view of Theorem~\ref{thmwinningmainN}, Propositions~\ref{mcmullen} \& \ref{lemmacontinuity}, and Theorem~\ref{potentialequivR}, we deduce the following
equivalence:

\begin{corol}\label{corolwinningmain}
    The following statements about a set $E \sub \RR$ are equivalent:
    \begin{itemize}
        \item[1.] $E$ is $1/2$ winning.

        \item[2.] $E$ is $1$ Cantor winning.

        \item[3.] $E$ is absolute winning.


    \end{itemize}
\end{corol}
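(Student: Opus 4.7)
The plan is to establish the cyclic chain of implications $(1) \Rightarrow (2) \Rightarrow (3) \Rightarrow (1)$, each of which is a direct consequence of a result already cited or established earlier in the excerpt; the corollary is essentially a repackaging.

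First, I would record $(1) \Rightarrow (2)$ as an immediate application of Theorem~\ref{thmwinningmainN}, which is stated precisely in the form "$1/2$ winning in $\RR$ implies $1$ Cantor winning". No extra work is needed here.

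Next, for $(2) \Rightarrow (3)$, I would invoke the last sentence of Theorem~\ref{potentialequivR} specialised to $N=1$: if $E\sub \RR$ is $1$ Cantor winning, then $E$ is absolute winning. Again this is immediate.

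The step $(3) \Rightarrow (1)$ is slightly less literal. Proposition~\ref{mcmullen} gives that absolute winning in $\RR$ implies $\alpha$ winning for every $\alpha \in (0,1/2)$, i.e. the supremum $\alpha_0 = \sup\{\alpha : E\text{ is }\alpha\text{ winning}\}$ satisfies $\alpha_0 \ge 1/2$. The key point, and the only place where a small argument is required, is to upgrade this to $1/2$ winning itself (Proposition~\ref{mcmullen} only gives the open interval). For this I would cite Proposition~\ref{lemmacontinuity}, which states that the supremum is attained, so $E$ is $\alpha_0$ winning and hence in particular $1/2$ winning by monotonicity of the $\alpha$-winning property in $\alpha$ (a smaller $\alpha$ gives Bob more room and thus a weaker condition on $E$). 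This completes the loop.

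I do not anticipate any genuine obstacle: the content has been packaged into the three auxiliary results, and the corollary serves only to combine them. The one subtle point to flag in the write-up is the use of Proposition~\ref{lemmacontinuity} to close the half-open gap left by Proposition~\ref{mcmullen}; this is the only place where one cannot literally quote the cited result and substitute.
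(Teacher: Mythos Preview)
Your proposal is correct and matches the paper's own approach: the paper states the corollary as a direct consequence of Theorem~\ref{thmwinningmainN}, Propositions~\ref{mcmullen} and~\ref{lemmacontinuity}, and Theorem~\ref{potentialequivR}, which is exactly the cycle $(1)\Rightarrow(2)\Rightarrow(3)\Rightarrow(1)$ you describe. Your identification of Proposition~\ref{lemmacontinuity} as the tool closing the gap between $\alpha<1/2$ and $\alpha=1/2$ is precisely the intended use.
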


In~\cite{Beresnevich_BA} the notion of \textit{Cantor rich} subsets of $\RR$ was
introduced. Cantor rich sets were shown to satisfy some of the desirable properties associated with 
winning sets, and have been applied since elsewhere \cite{Yang}. As an application of the results in this paper we can prove that Cantor rich
and Cantor winning coincide.

\begin{theorem}\label{thm:cantor_rich}
A set $E\sub \RR$ is Cantor rich in an interval $B_0$ if and only if $E$ is Cantor winning in $B_0$.
\end{theorem}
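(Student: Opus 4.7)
The plan is to prove the two implications separately by a direct comparison of the defining combinatorial constructions underlying Cantor richness (as introduced in \cite{Beresnevich_BA}) and Cantor winning (as formalised in \cite{BadziahinHarrap} and reinterpreted here via the Cantor game of \S\ref{sec:cantor_game}). Both notions produce generalised Cantor subsets of $E$ living inside $B_0$, partitioned level-by-level with a controlled number of removals; the difference is that Cantor winning prescribes a geometric decay of removals in a fixed base $R$, while Cantor rich prescribes a summability-type condition on the removal counts. The task, in each direction, is to extract one form of bound from the other.

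For the forward implication (Cantor winning $\Rightarrow$ Cantor rich), I would take an $\eps$ Cantor winning set $E$ and, for any subinterval $B \sub B_0$ and any sufficiently large base $R$, use the winning construction (or equivalently the winning strategy for the Cantor game on $B$) to generate a generalised Cantor set inside $E \cap B$ whose level-$n$ removal count is bounded by $R^{(1-\eps)n}$. Because this bound decays geometrically, it automatically satisfies (with room to spare) the summability-type inequality appearing in the definition of Cantor rich, provided the base $R$ is chosen sufficiently large relative to the Cantor rich parameters. A small book-keeping step handles arbitrary $B \sub B_0$ uniformly.

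For the reverse implication (Cantor rich $\Rightarrow$ Cantor winning), I would start from the Cantor rich construction of $E$ in $B_0$, which for each base $R_0$ (large enough) supplies a generalised Cantor subset with summable removal sequence. Fixing a target $\eps > 0$, I would pass to the base $R = R_0^{m}$ for a suitable integer $m$ and bunch $m$ consecutive levels of the Cantor rich construction into a single level of a new construction in base $R$. The summability of the Cantor rich removals, together with a pigeonhole argument over the bunched blocks, forces infinitely many levels where the accumulated removal count is bounded by $R^{(1-\eps)n}$, yielding the desired geometric decay and hence $\eps$ Cantor winning. Uniformity in subintervals $B \sub B_0$ follows from the fact that the Cantor rich property is assumed to hold on $B_0$ and passes to sub-balls by the nature of the construction.

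The main obstacle is the parameter reconciliation between the summability condition in Cantor richness and the geometric decay condition in Cantor winning; in particular, the quantitative pigeonhole step in the reverse direction must be tuned so that the base $R$ of the Cantor winning construction, the bunching parameter $m$, and the exponent $\eps$ all balance correctly. Once this quantitative matching is made precise, the rest of the argument is a level-by-level comparison of two essentially parallel tree constructions.
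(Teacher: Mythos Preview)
Both directions of your plan have gaps, and the reverse direction takes a genuinely different route from the paper's.

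\textbf{Forward direction.} You are close, but miss a necessary step. Cantor rich requires, for \emph{each} $R>M$ and \emph{each} $0<y<1$, a $(B_0,R,\vr)$ Cantor set whose removals satisfy the summability bound $\leq y$. Simply inserting the Cantor winning bound $r_{m,n}\leq R^{(n-m+1)(1-\eps)}$ at base $R$ gives
\[
\sum_{m=0}^{n}\left(\tfrac{4}{R}\right)^{n-m+1}r_{m,n}\ \leq\ \sum_{k\geq 1}\left(\tfrac{4}{R^{\eps}}\right)^{k},
\]
which is bounded but does not tend to $0$; so you cannot achieve arbitrarily small $y$ this way. The paper repairs this by invoking Corollary~\ref{corl:cantor} at base $R^\ell$ to obtain a \emph{local} $(B_0,R^\ell,R^{\ell(1-\eps)})$ Cantor set, then reinterpreting it at base $R$: the only nonzero removals then occur with $n-m+1=\ell+1$, so the Cantor rich sum equals $\tfrac{4}{R}\big(\tfrac{4}{R^{\eps}}\big)^{\ell}<y$ once $\ell$ is large enough (here $R>M=4^{1/\eps}$ gives $4/R^{\eps}<1$).

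\textbf{Reverse direction.} Your bunching-and-pigeonhole route does not match the paper and, as stated, does not work. First, bunching a Cantor rich construction at base $R_0$ only yields Cantor sets at bases $R_0^{m}$, not at every $R\geq R_\eps$ as the definition demands. Second, ``infinitely many levels where the accumulated removal count is bounded'' is too weak: Cantor winning requires $r_{m,n}\leq R^{(n-m+1)(1-\eps)}$ at \emph{every} pair $(m,n)$. Third, the pointwise bound one extracts from Cantor rich, $r_{m,n}\leq y(R/4)^{n-m+1}$, grows faster in $n-m$ than $R^{(n-m+1)(1-\eps)}$ whenever $R>4^{1/\eps}$, and no single $y$ can repair this uniformly. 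The paper sidesteps all of this by invoking Theorem~\ref{potentialequiv}: it suffices to show $E$ is $c_0$ potential winning with $c_0=1-\eps_0$, and Alice's winning strategy in the $(c,\beta)$ potential game is built directly from a Cantor rich construction at the fixed base $R=\lfloor M\rfloor+1$, with $y$ chosen small in terms of $\beta$ and $c$.

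Finally, your remarks about uniformity over sub-balls $B\sub B_0$ are unnecessary: both ``Cantor rich in $B_0$'' and ``Cantor winning in $B_0$'' concern the single ball $B_0$ only.
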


Another way to measure how large a set is, is to consider its intersections with classes of tamed sets.
In fact, proving 
nonempty intersection with certain classes of sets
motivated the invention of the absolute game. For example, it is well known that absolute winning sets are large in the following dual sense \cite{BFKRW}:
\begin{itemize}
\item[(W$4$)] The intersection of an absolute winning set with any closed nonempty uniformly perfect set is nonempty.
\end{itemize}

We demonstrate that a similar property holds for Cantor winning sets. In fact, Cantor winning Borel sets are characterised by an analogue of property (W4):

\begin{theorem}\label{thm:intersection_with_fractal_rn}
Let $E\sub \RR^N$. If $E$ is $\eps$ Cantor winning then $E\cap K\neq \emptyset$ for any $K \in \RR^N$ which is $N(1-\eps)$ Ahlfors regular. If $E$ is Borel then the converse is also true.
\end{theorem}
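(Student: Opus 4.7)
I would prove the two directions separately, leveraging the equivalences of Cantor winning with the potential game (Theorem~\ref{potentialequivR}) and the Cantor game (Theorem~\ref{thm:potentialCantor}).

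For the forward direction, let $s := N(1-\eps)$ and let $E$ be $\eps$ Cantor winning, so by Theorem~\ref{potentialequivR} also $s$ potential winning. Let $\mu$ be the Ahlfors $s$-regular measure on $K$, satisfying $c_1 r^s \le \mu(B(x,r)\cap K) \le c_2 r^s$ for $x \in K$ and $r$ in the appropriate range. Fix a winning strategy for the player who wants the limit in $E$ (call her Alice) in the $s$ potential game starting from a ball $B_0 \supseteq K$. The strategy for the opposing player (Bob) will be to maintain inductively that the center $x_n$ of $B_n$ lies in $K$. Given $B_n = B(x_n, r_n)$ with $x_n \in K$, after Alice's move Bob selects $x_{n+1} \in K \cap B(x_n, (1-\beta)r_n)$ and sets $B_{n+1} := B(x_{n+1}, \beta r_n)$; the existence of such $x_{n+1}$ follows from the Ahlfors lower bound. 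Since $K$ is closed, $x_\infty := \lim_n x_n \in K$, and since Alice's strategy wins, $x_\infty \in E$. Hence $E \cap K \ne \emptyset$.

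For the converse (with $E$ Borel), I would argue by contrapositive. Suppose $E$ is not $\eps$ Cantor winning; the goal is to exhibit an $N(1-\eps)$ Ahlfors regular $K$ disjoint from $E$. By Theorem~\ref{thm:potentialCantor} the player aiming for the limit in $E$ has no winning strategy in the $\eps$ Cantor game. The Cantor game is Borel---each move is drawn from a countable collection---so Martin's Borel determinacy theorem provides the opposing player with a winning strategy. Running this strategy against every legal response of its opponent produces a descending tree of balls whose intersection is a closed set $K \subseteq \RR^N \setminus E$. By the design of the $\eps$ Cantor game, at scale $R^{-n}$ the branching constant at each node is comparable to $R^{N(1-\eps)}$; the uniform Bernoulli-type measure on the resulting tree then witnesses that $K$ is $N(1-\eps)$ Ahlfors regular.

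The main obstacle lies in the converse direction: verifying that the set $K$ produced by the winning strategy is genuinely \emph{two-sided} Ahlfors regular, and not merely of the correct Hausdorff dimension. I would address this by ``fattening'' the tree---including every legal response at each node rather than a single branch---so that the branching structure is uniform and both Ahlfors bounds hold. A secondary issue is relating the discrete $R$-adic scales of the Cantor game to arbitrary continuous scales in the Ahlfors regularity estimate, which is resolved by a standard comparison of scales.
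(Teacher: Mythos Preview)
Your forward argument has a real gap. In the potential game Alice wins not only when the outcome lands in $E$, but also \emph{by default} whenever the outcome is covered by her accumulated deletions $\bigcup_{i,k} A_{i,k}$ (see~\eqref{eq:potentialDefault}). Your Bob merely keeps his centres in $K$ and ignores Alice's collections entirely; nothing then prevents $x_\infty$ from lying in one of those deleted balls, in which case Alice wins by default and you cannot conclude $x_\infty\in E$. Making Bob simultaneously stay in $K$ \emph{and} dodge Alice's deletions is exactly the nontrivial measure-theoretic step: it requires comparing the $\mu$-mass that Alice can kill (controlled by the $c$-cost of her collections) against the $\mu$-mass of Bob's current ball, and this is precisely the potential-function argument of Theorem~\ref{theoremschmidtgamerelation}. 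The paper shortcuts this by observing that the potential winning property restricts to the subspace $K$ and then invoking Theorem~\ref{theoremschmidtgamerelation} on $K$ itself to get an $(\alpha,\beta)$ winning set there, which is automatically nonempty.

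Your converse is close in spirit to the paper's proof of Theorem~\ref{thm:potential_ahlfors}: both invoke Borel determinacy to hand Bob a winning strategy and then mine that strategy for an Ahlfors regular Cantor set disjoint from $E$. You run the Cantor game, the paper runs the potential game; your choice has the pleasant feature that each turn is a move in a finite set, so determinacy is unproblematic. However, ``running Bob's strategy against every legal response'' is not quite the right construction: Alice has a huge number of legal subsets, and the resulting tree of Bob-moves need not have controlled branching. What one actually does (and what the paper does) is extract children \emph{iteratively}: let Alice first delete nothing and record Bob's reply $g_1$, then let Alice delete $\{g_1\}$ and record $g_2$, and so on, obtaining roughly $R^{N(1-\eps)}$ pairwise distinct children. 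This is the step that pins down the branching, and with it the Ahlfors regularity of $K$; note that even so one only obtains Ahlfors regular sets of dimension arbitrarily close to $N(1-\eps)$, not on the nose, which matches the $\dim_R K>c_0$ formulation of Theorem~\ref{thm:potential_ahlfors}.
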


Moreover, the intersection of Cantor winning sets with Ahlfors regular sets is often large in terms of Hausdorff dimension (cf. Theorem \ref{propositionfullHD}). The characterisation in Theorem \ref{thm:intersection_with_fractal_rn} makes the notion of Cantor winning sets
natural from a geometric point of view.

\begin{rem}\label{rem:barak2}
Continuing Remark \ref{rem:barak}, it is also possible to characterise hyperplane absolute winning sets as sets which have a nontrivial intersection with any closed \emph{hyperplane diffuse} fractal (cf. \cite[Definition 4.2]{BBFKW}). This characterisation is not discussed in this note.
\end{rem}

As a typical example of a Cantor winning set we suggest the following:

\begin{theorem}\label{thm:dolgopyat}
Let $X=\{0,1\}^\bbn$ with the metric $d(x,y) = 2^{-v(x,y)}$, where
$v(x,y)$ denotes the length of the longest common initial segment of $x$ and $y$. Let $T:X\to X$ be the left shift map. If
$K\sub X$ satisfies $\dim_H K < 1$ then the set
    \begin{equation}\label{eq:exceptionalSet}
        E=\left\{x\in X\sep \overline{\{T^i x\sep i\geq 0\}}\cap K=\emptyset \right\}
    \end{equation}
    is $1 - \dim_H K$ Cantor winning, or, equivalently, $\dim_H K$ potential winning.
\end{theorem}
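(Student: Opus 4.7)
Our plan is to prove that $E$ is $s$-potential winning, where $s=\dim_H K$; the $(1-s)$-Cantor winning statement then follows from the equivalence between Cantor and potential winning that the paper establishes (cf.\ Theorem~\ref{potentialequivR}, whose proof should extend from $\mathbb{R}^N$ to the doubling metric space $X$). Replacing $K$ by its closure neither changes the Hausdorff dimension nor enlarges $E$, so we may assume $K$ is closed and hence compact.

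Fix $\eta\in(s,1)$. Since $\mathcal{H}^\eta(K)=0$, for any $M\in\mathbb{N}$ and any $\delta>0$ there exist covers $\mathcal V$ of $K$ by dyadic cylinders with $|V|\ge M$ for all $V\in\mathcal V$ and $\sum_{V\in\mathcal V}2^{-\eta|V|}<\delta$. Alice's strategy in the $s$-potential game is as follows: pick $\beta\in(0,1)$ close enough to $1$ that Bob's length increment $k_{n+1}-k_n$ never exceeds $M$, and at stage $n$, after Bob plays a cylinder $B_n$ of length $k_n$, Alice blocks the cylinders $T^{-i}V\cap B_n$ for $V\in\mathcal V$ and for $i$ in the window $[a_n,a_{n+1})$ with $a_n=k_n-M+1$. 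These windows exhaust $\mathbb{N}$ as $n\to\infty$, and by choice of $\beta$ every $(i,V)$ that Alice handles lies in the ``overlap regime'' $k_n-|V|<i<k_n$, so each $T^{-i}V\cap B_n$ consists of at most one cylinder of length $i+|V|$, contributing $2^{-s(i+|V|)}$ to the $s$-potential; in particular the pathological case $B_n\subset T^{-i}V$ never arises.

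The $s$-potential of Alice's move at stage $n$ is thereby bounded by
\[
\sum_{V\in\mathcal V}\sum_{i=a_n}^{a_{n+1}-1} 2^{-s(i+|V|)}
\;\le\; C_s\cdot 2^{-s(k_n-M+1)}\cdot\sum_{V\in\mathcal V}2^{-s|V|},
\]
and the desired bound $\le \beta^s 2^{-sk_n}$ reduces to making $\sum_V 2^{-s|V|}$ sufficiently small. The winning condition is then immediate: any $x=\bigcap_n B_n$ satisfies $T^ix\notin \bigcup_V V$ for all $i\ge 0$, so the orbit $\{T^ix:i\ge 0\}$ stays in the closed complement of a fixed open neighbourhood of the compact set $K$, and therefore $\overline{\{T^ix:i\ge 0\}}\cap K=\emptyset$.

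The main obstacle is the discrepancy between what the dimension hypothesis provides ($\sum 2^{-\eta|V|}$ small for $\eta>s$) and what the $s$-potential bound above requires ($\sum 2^{-s|V|}$ small). Our proposed remedy is to first prove that $E$ is $c$-potential winning for every $c>s$ (by running the above strategy with $\eta=c$, so that $\sum 2^{-c|V|}=\sum 2^{-\eta|V|}$ is directly small by the choice of cover), and then deduce $s$-potential winning via a continuity argument in the spirit of Proposition~\ref{lemmacontinuity} applied to the potential game. A minor additional point to verify is that for each $c>s$ the covers can be arranged with $|V|$ in a controlled range so that the sum in the display is genuinely finite; this is possible because $\dim_H K<1$ supplies covers at arbitrarily prescribed scales.
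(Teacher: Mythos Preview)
Your core strategy---Alice deletes preimages $T^{-i}V$ of a fine cover $\mathcal V$ of $K$---is exactly what the paper does. However, the ``main obstacle'' you identify is a phantom created by two misreadings of the potential game, and your proposed remedy is unnecessary.

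First, reread the definition in \S\ref{sec:potential}: $E$ is $s$-potential winning means that for every $c>s$ and every $\beta>0$, Alice wins the $(c,\beta)$-potential game. So from the outset you fix $c>s$ and work with the legality condition \eqref{eq:potentialLegal}, which involves the exponent $c$, not $s$. You therefore need $\sum_{V}2^{-c|V|}$ small, and since $c>\dim_H K$ this is immediate from $\mathcal H^c(K)=0$. Your displayed estimate should carry exponent $c$ throughout, and then there is nothing to reconcile. Your ``remedy'' (prove $(c,\beta)$-winning for all $c>s$) is not a remedy but literally the definition; no continuity argument is needed, and Proposition~\ref{lemmacontinuity} (which is about Schmidt's $\alpha$ game) plays no role.

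Second, in the $c_0$-potential game Bob chooses $\beta$, not Alice. You cannot ``pick $\beta$ close to $1$'' to bound the length increments $k_{n+1}-k_n$. The correct order of quantifiers is: Bob reveals $c>s$ and $\beta>0$; this fixes an integer $\ell$ with $2^{-\ell}\le\beta$, hence a bound on the increment; \emph{then} Alice chooses the cover $\mathcal V$ with $\sum_V\rad(V)^c$ small enough in terms of $\ell$ and $\beta$ (the paper takes $<\ell^{-1}(2^{-2\ell}\beta)^c$). Your overlap-regime analysis (at most one preimage of each $V$ meets $B_n$) is correct once the dependencies are put in this order; the paper phrases the same fact via separation of preimages rather than via a lower bound on $|V|$, but the content is identical.

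With these two corrections your argument coincides with the paper's proof. Incidentally, you need not speculate about extending Theorem~\ref{potentialequivR} to $X$: the paper already states the general version as Theorem~\ref{potentialequiv}, and $X=\{0,1\}^{\NN}$ carries its standard splitting structure.
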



Here, $\dim_H$ stands for the Hausdorff dimension. The set $E$ in (\ref{eq:exceptionalSet}) was first considered by Dolgopyat~\cite{Dolgopyat},
who proved that $\dim_H E=1$. Unlike the property of having full Hausdorff dimension, the potential winning property passes automatically to subspaces (see Lemma \ref{lem:restriction}).
Hence, in particular, using the notation of Theorem \ref{thm:dolgopyat}, if $Y\sub X$ is a subshift of finite type and $K\sub X$ satisfies $\dim_H \left(K\cap Y\right) < \dim_H Y$, then Theorems \ref{thm:dolgopyat} and \ref{propositionfullHD}
imply that $\dim_H \left(E\cap Y\right) = \dim_H Y$, which is exactly the content of \cite[Theorem 1]{Dolgopyat}.

The set of all points whose orbit closure misses a given
subset is considered in various contexts, such as piecewise expanding maps of the interval and Anosov diffeomorphisms on compact surfaces \cite{Dolgopyat}, certain partially hyperbolic maps \cite{CamposGelfert2,Wu}, rational maps on the Riemann sphere \cite{CamposGelfert}, and $\beta$ shifts \cite{FarmPersson,FPS}. We suggest that Theorem \ref{thm:dolgopyat} might be generalisable to these contexts; however, we choose to present the proof in the above prototypical example. 
See also \cite{BMPS} for a related problem where the rate at which the orbit is allowed to approach the subset $K$ depends on the geometrical properties of $K$. 





\section{Background, notation, and definitions}\label{definitions}

We formally define the concepts introduced in \S\ref{intro}. Each of the
games we describe below is played between two players, Alice
and Bob,
in a complete metric space $(X,\vd)$.
A ball in $B\sub X$ is specified by its centre and radius, denoted by $\cent(B)$ and $\rad(B)$.


\subsection{Schmidt's game}

Given two real parameters $\alpha,\beta \in (0,1)$, \textit{Schmidt's $(\alpha, \beta)$ game} begins with Bob choosing an arbitrary closed ball $B_0\sub X$. Alice and Bob then take turns to choose a sequence of nested closed balls (Alice chooses balls $A_i$ and Bob chooses balls $B_i$) satisfying both


$$
B_0\subp A_1\subp B_1 \subp A_2\subp\cdots\subp B_i \subp A_{i+1} \subp B_{i+1}\subp \cdots
$$
and
\begin{equation}\label{scmcond}
\rad(A_{i+1}) = \alpha \cdot \rad(B_i) \textrm{  and  } \rad(B_{i+1}) = \beta \cdot \rad(A_{i+1}) \textrm{  for all  } i\geq 0.
\end{equation}
Since the radii of the balls tend to zero, at the end of the game the intersection of Bob's (or Alice's) balls must consist of a single point. See Figure \ref{fig:schmidtsGame} for a pictorial representation of the game, in which the shaded area represents where Alice has specified Bob must play his next ball. A set $E\sub X$ is called \textit{$(\alpha,\beta)$ winning} if Alice has a strategy for placing her balls that ensures
\begin{equation}\label{eq:outcome}
\bigcap_{i =0}^\infty \, B_i \sub E.
\end{equation}
In this case we say that Alice wins. The \emph{$\alpha$ game} is
defined by allowing Bob to choose the parameter $\beta$ and then
continuing as in the $(\alpha,\beta)$ game. We say that $E\sub X$
is\textit{ $\alpha$ winning} if Alice has a winning strategy for the
$\alpha$ game. Note that $E\sub X$ is $\alpha$ winning if and only
if $E$ is $(\alpha,\beta)$ winning for all $0<\beta<1$. Finally,
\emph{Schmidt's game} is defined by allowing Alice to choose
$0<\alpha<1$ and then continuing as in the $\alpha$ game. We say
$E\sub X$ is \textit{winning} if Alice has a winning strategy for
Schmidt's game. Note that $E$ is winning if and only if it is $\alpha$ winning for
some $\alpha \in (0,1)$.

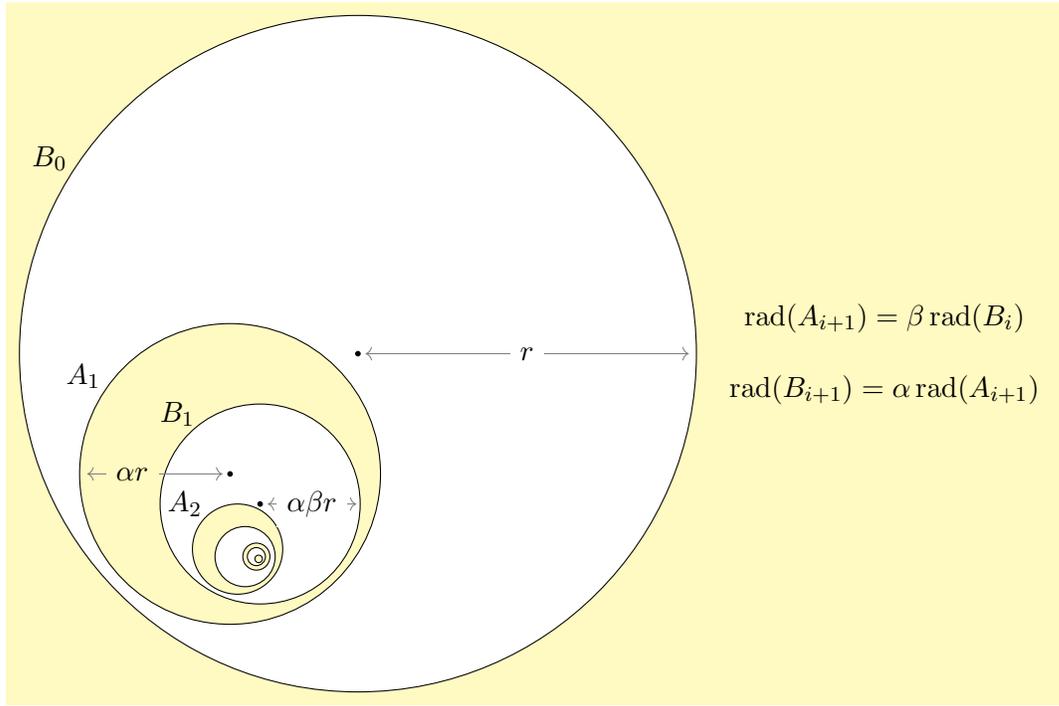
\begin{figure}
\begin{center}
\begin{tikzpicture}[background rectangle/.style={fill=yellow!30}, show background rectangle]


\draw[fill=white] (-0.2,0) circle [radius = 4.5]; 
\draw[fill=yellow!30] (-1.9, -1.6) circle [radius =2]; 
\draw[fill=white] (-1.5, -2.0) circle [radius =1.33]; 
\draw[fill=yellow!30] (-1.8, -2.6) circle [radius =0.6]; 
\draw[fill=white] (-1.7, -2.7) circle [radius =0.4];
\draw[fill=yellow!30] (-1.55, -2.7) circle [radius =0.18];
\draw[fill=white] (-1.55, -2.7) circle [radius =0.12];
\draw[fill=yellow!30] (-1.52, -2.73) circle [radius =0.05];

\draw[fill] (-0.2,0) circle [radius=0.03]; 
\draw[fill] (-1.9, -1.6) circle [radius=0.03]; 
\draw[fill] (-1.5, -2.0) circle [radius=0.03]; 

\draw[<->, gray, thin] (-0.1,0) --  (4.2, 0);  
\node[fill=white] at (2.05, 0) {$r$};  
\draw[<->, gray, thin] (-3.8, -1.6) -- (-2.0, -1.6);  
\node[fill=yellow!30] at (-3.2, -1.6)  {$\alpha r$};  
\draw[<->, gray, thin] (-1.40, -2.0)  -- (-0.22, -2.0);  
\node[fill=white] at (-0.81, -2.0) {$\alpha\beta r$}; 

\node at (-4.3, 2.6) {$B_0$}; 
\node at (-3.85, -0.3) {$A_1$}; 
\node at (-2.6, -0.8) {$B_1$}; 
\node at (-2.5, -2.0) {$A_2$}; 

\node[align=center]  at (6.8, 0) {$\displaystyle {\rm rad}(A_{i+1}) = \beta \, {\rm rad}(B_i)$ \\ \\ ${\rm rad}(B_{i+1}) = \alpha \, {\rm rad}(A_{i+1})$};

\end{tikzpicture}

\end{center}
\caption{Schmidt's game}
\label{fig:schmidtsGame}
\end{figure}

It is easily observed that if a set $E$ is $\alpha$ winning for some
$\alpha \in (0,1)$ then $E$ is $\alpha'$ winning for every $\alpha'
\in (0, \alpha)$, and that $E$ is $\alpha$ winning for some $\alpha
\in (1/2,1)$ if and only if $E=X$ (cf. \cite[Lemma 5 and 8, respectively]{Schmidt1}); i.e., the property of being
$1/2$ winning is the strongest possible for a nontrivial subset.

\begin{rem}\label{rem:closedBalls}
Strictly speaking, since a ball in an abstract metric space $X$ may not necessarily possess a unique centre and radius, Alice and Bob should pick successive pairs of centres and radii for their balls, satisfying certain distance inequalities that formally guarantee that the appropriate subset relations hold between the corresponding balls. However, we will assume that this nuance is accounted for in each player's strategy.
\end{rem}

We end the introduction of Schmidt games with the proof of the
folklore Proposition~\ref{lemmacontinuity}.

\begin{proof}[Proof of Proposition \ref{lemmacontinuity}]
Fix $0 < \beta_0 < 1$, let $\alpha_0$ be as in the statement, and set
\begin{equation}\label{def_ab}
\alpha = \alpha_0\frac{1 - (\alpha_0 \beta_0)^2 - \beta_0 (1-\alpha_0)}{1 - (\alpha_0 \beta_0)^2 - \alpha_0 \beta_0 (1-\alpha_0)} <
\alpha_0,\quad \alpha\beta = \left(\alpha_0\beta_0\right)^2.
\end{equation}
We describe the winning strategy for Alice for the
$(\alpha_0,\beta_0)$ game (Game I) based on her strategy for the
$(\alpha,\beta)$ game (Game II). It can be given as follows:
\begin{itemize}
\item[1.] Whenever Bob plays a move $B_{2n} = B(b_{2n},\rho_{2n})$ in Game I, he correspondingly plays the move $B_n' = B(b_n',\rho_n')$ in Game II, where
\begin{align*}
(1-\alpha)\rho_n' &= (1-\alpha_0) \rho_{2n},&
b_n' &= b_{2n}.
\end{align*}
\item[2.] When Alice responds to this by playing a move $A_{n+1}' = B(a_{n+1}',\alpha\rho_n')$ in Game II (according to her winning strategy), she correspondingly plays the move $A_{2n+1} = B(a_{2n+1},\alpha_0 \rho_{2n})$ in Game I, where
\[
a_{2n+1} = a_{n+1}'.
\]
\item[3.] When Bob responds to this by playing a move $B_{2n+1} = B(b_{2n+1},\rho_{2n+1})$ in Game I, Alice responds by playing the move $A_{2n+2} = B(a_{2n+2},\alpha_0 \rho_{2n+1})$ in Game I, where
\[
a_{2n+2} = b_{2n+1}.
\]
This sets the stage for the next iteration.
\end{itemize}
To finish the proof, it suffices to check that all of these moves are legal. When $n=0$, move 1 is legal because any ball can be Bob's first move. Move 2 is legal because
\[
\vd(b_{2n},a_{2n+1}) = \vd(b_n',a_{n+1}') \leq (1-\alpha)\rho_n' = (1-\alpha_0) \rho_{2n},
\]
and move 3 is legal because $\vd(b_{2n+1},a_{2n+2}) = 0$. When $n\geq 1$, move 1 is legal because
\begin{align*}
\vd(a_n',b_n') &= \vd(a_{2n-1},b_{2n}) \leq \vd(a_{2n-1},b_{2n-1}) + \vd(a_{2n},b_{2n})\\
&\leq (1-\beta_0)\alpha_0 \rho_{2n-2} + (1-\beta_0)\alpha_0(\alpha_0\beta_0) \rho_{2n-2} = (1-\beta)\alpha\rho_{n-1}'.
\qedhere\end{align*}
\end{proof}

For details and other properties of winning sets see
Schmidt's book~\cite{Schmidt3}.

\subsection{The absolute game}\label{sec:absgame}

The \textit{absolute game} was introduced by McMullen in \cite{McMullen_absolute_winning}.
It is played similarly to Schmidt's game but instead of choosing a region where Bob
must play, Alice now chooses a region where Bob must not play. To be
precise, given $0<\beta<1$, the $\beta$ absolute game is played as
follows: Bob first picks some initial ball $B_0 \sub X$. Alice and
Bob then take turns to place successive closed balls in such a way
that

\begin{equation}\label{eq:absoluteGame}
B_0 \: \subp\: B_0 \setminus A_1 \:\subp \:B_1 \subp \cdots \subp\: B_i \: \subp \: B_i \setminus A_{i+1} \:\subp \:B_{i+1} \:\subp \: \cdots,
\end{equation}
subject to the conditions

\begin{equation}\label{eq:absoluteInequalities}
\rad(A_{i+1}) \leq \beta \cdot \rad(B_i) \quad \textrm{  and  } \quad \rad(B_{i+1}) \geq \beta \cdot \rad(B_i) \quad \textrm{  for all  } i \geq 0.
\end{equation}
See Figure \ref{fig:absoluteGame} for a pictorial view of the game in $\R^2$.

\begin{figure}
\begin{center}
\begin{tikzpicture}[background rectangle/.style={fill=yellow!30}, show background rectangle]


\draw[fill=yellow!30] (-0.2,0) circle [radius = 4.5]; 
\draw[fill=white] (-1.9, -1.6) circle [radius =1.35]; 
\draw (0.3, 2.3) circle [radius =1.35]; 
\draw[fill=white] (0.2, 3.1) circle [radius =0.41]; 
\draw (0.1, 1.5) circle [radius =0.41]; 
\draw[fill=white] (0.05, 1.6) circle [radius =0.12];
\draw (0.15, 1.3) circle [radius =0.12];
\draw[fill=white] (0.11, 1.25) circle [radius =0.04];
\draw (0.19, 1.35) circle [radius =0.04];

\draw[fill] (-0.2,0) circle [radius=0.03]; 
\draw[fill] (-1.9, -1.6) circle [radius=0.03]; 
\draw[fill] (0.3, 2.3) circle [radius=0.03]; 

\draw[<->, gray, thin] (-0.1,0) --  (4.2, 0);  
\node[fill=yellow!30] at (2.05, 0) {$r$};  
\draw[<->, gray, thin] (-3.15, -1.6) -- (-2.0, -1.6);  
\node[fill=white] at (-2.56, -1.6)  {$\beta r$};  
\draw[<->, gray, thin] (0.4, 2.3)  -- (1.55, 2.3);  
\node[fill=yellow!30] at (0.98, 2.3) {$\beta r$}; 

\node at (-4.3, 2.6) {$B_0$}; 
\node at (-3.35, -0.7) {$A_1$}; 
\node at (-1.1, 3.3) {$B_1$}; 
\node at (0.95, 3.1) {$A_2$}; 
\node at (-0.5, 1.95) {$B_2$}; 

\node[align=center] at (6.8, 0) {$\displaystyle {\rm rad}(A_{i+1}) \leq \beta \, {\rm rad}(B_i)$ \\ \\ ${\rm rad}(B_{i+1}) \geq \beta \, {\rm rad}(B_i)$};

\end{tikzpicture}
\end{center}
\caption{The absolute game}
\label{fig:absoluteGame}
\end{figure}

If $r_i:= \rad(B_i) \not \rightarrow 0$ then we say that Alice wins
by default. We say a set $E \sub X$ is \textit{$\beta$ absolute
winning} if Alice has a strategy which guarantees that either she
wins by default or~(\ref{eq:outcome}) is satisfied. In this case, we
say that Alice wins. The \emph{absolute game} is defined by allowing
Bob to choose $0<\beta<1$ on his first turn and then continuing as
in the $\beta$ absolute game. A set is called \emph{absolute
winning} if Alice has a winning strategy for the absolute game. Note
that a set is absolute winning if and only if it is $\beta$ absolute
winning for every $0<\beta<1$.

A useful generalisation of the absolute game 
was introduced in \cite[Appendix C]{FSU4}. Let $\calh$ be any collection of nonempty closed subsets of $X$. The \emph{$\calh$ absolute game} is defined in a similar way to the absolute game, but Alice is now allowed to choose a neighborhood of an element $H\in\calh$, namely,
\[
A_{i+1} = B\left(H,\beta\cdot \rad\left(B_i\right)\right).
\]
McMullen's absolute game is the same as the $\calh$ absolute game when $\calh$ is the collection of all singletons. The first generalisation of the absolute game was the \emph{$k$ dimensional absolute game}, which is the $\calh$ absolute game where $X=\RR^N$ and $\calh$ is the collection of $k$ dimensional hyperplanes in $\RR^N$. When $k=N-1$ the associated winning sets are often referred to by the acronym HAW, short for ``hyperplane absolute winning'' \cite{BBFKW}. Some of the results in this note regarding the absolute game may be generalised to the context of $\calh$ absolute game but we will not pursue this line further in the current note. 

\begin{rem}
These definitions deserve some justification. We do not
require $0 < \beta < \frac{1}{3}$ as in the original
definitions for $X=\RR^N$ \cite{McMullen_absolute_winning,BBFKW}. To cover those cases where Bob chooses
$\frac{1}{3}\leq \beta < 1$, in which Bob might not have a legal
move that satisfies (\ref{eq:absoluteGame}) on one of his turns, we
declare that Bob is losing if this happens. This gives the wanted
effect when $X=\RR^N$, but puts us in a situation that a subset of
an abstract metric space might be winning because the space $X$ is
not large enough to allow Bob not to lose by default. In this
situation, even the empty set might be absolute winning (cf. Remark
\ref{rem:emptysetIsWinning}). This situation, however, cannot occur
when $X$ is uniformly perfect
(cf.~Section \ref{sec:doublingAndDiffuse}).

Also note that it is shown in \cite[Proposition 4.5]{FSU4} that
replacing the inequalities (\ref{eq:absoluteInequalities}) with
equalities does not change the class of absolute winning sets. The
reason for introducing these inequalities is to allow an easy proof
of the invariance of absolute winning sets under ``quasisymmetric
maps'' (cf.~\cite[Theorem 2.2]{McMullen_absolute_winning}).
\end{rem}

\subsection{The potential game}\label{sec:potential}

Given two parameters $c, \beta>0$, the \emph{$(c,\beta)$ potential game}
begins with Bob picking some initial ball~$B_0 \sub X$. On her 
$(i+1)$st turn,
Alice chooses a countable collection of closed balls $A_{i+1, k}$, satisfying
\begin{equation}\label{eq:potentialLegal}
\sum_{k} \rad\left(A_{i+1,k}\right)^c \leq \left(\beta \cdot \rad\left(B_{i}\right)\right)^c.
\end{equation}
By convention, we assume that a ball always has a positive radius; i.e. Alice
can not choose 
$A_{i,k}$ to be singletons. Then, Bob on his $(i+1)$st
turn chooses a ball $B_{i+1}$ satisfying $\rad\left(B_{i+1}\right) \geq \beta \cdot \rad\left(B_i\right)$.  See Figure \ref{fig:potentialGame} for a demonstration of the playing of such a game in $\R^2$ (in which, for simplicity, Bob's balls do not intersect Alice's preceding collections  - however, it should be understood that this is not a necessary condition for Bob).

\begin{figure}[!b]
\begin{center}
\begin{tikzpicture}[background rectangle/.style={fill=yellow!30}, show background rectangle]


\draw[fill=yellow!30] (-0.2,0) circle [radius = 4.5]; 
\draw[fill=white] (-2.9, -1.6) circle [radius =0.5]; 
\draw[fill=white] (1.3, -1.9) circle [radius =0.35]; 
\draw[fill=white] (-2.1, 0.6) circle [radius =0.25]; 
\draw[fill=white] (-0.3, -2.9) circle [radius =0.15]; 
\draw[fill=white] (-1, -0.9) circle [radius =0.1]; 
\draw (0.3, 2.3) circle [radius =1.35]; 
\draw[fill=white] (0.2, 3.1) circle [radius =0.15]; 
\draw[fill=white] (0, 2.7) circle [radius =0.15]; 
\draw[fill=white] (-0.7, 2.6) circle [radius =0.06]; 
\draw[fill=white] (1, 1.6) circle [radius =0.05]; 
\draw (0.1, 1.5) circle [radius =0.41]; 
\draw[fill=white] (0.05, 1.6) circle [radius =0.06];
\draw[fill=white] (0.35, 1.5) circle [radius =0.06];
\draw (0.15, 1.3) circle [radius =0.12];
\draw[fill=white] (0.11, 1.25) circle [radius =0.04];
\draw (0.18, 1.34) circle [radius =0.04];

\draw[fill] (-0.2,0) circle [radius=0.03]; 
\draw[fill] (0.3, 2.3) circle [radius=0.03]; 

\draw[<->, gray, thin] (-0.1,0) --  (4.2, 0);  
\node[fill=yellow!30] at (2.05, 0) {$r$};  
\draw[<->, gray, thin] (0.4, 2.3)  -- (1.55, 2.3);  
\node[fill=yellow!30] at (0.98, 2.3) {$\beta r$}; 

\node at (-4.3, 2.6) {$B_0$}; 
\node at (-3.65, -1.1) {$A_{1,1}$}; 
\node at (0.6, -1.5) {$A_{1,2}$}; 
\node at (-2.7, 0.95) {$A_{1,3}$}; 
\node at (-0.8, -2.6) {$A_{1,4}$}; 
\node at (-1.4, -0.6) {$A_{1,5}$}; 
\node at (-1.1, 3.3) {$B_1$}; 
\node at (0.75, 3.1) {$A_{2, 1}$}; 
\node at (-0.5, 1.95) {$B_2$}; 

\node[align=center] at (7.5, 0) {$\displaystyle \sum_k ({\rm rad}(A_{i+1, k}))^c \leq (\beta \, {\rm rad}(B_i))^c$ \\ \\ ${\rm rad}(B_{i+1}) \geq \beta \,{\rm rad}(B_i)$ };

\end{tikzpicture}
\end{center}
\caption{The potential game}
\label{fig:potentialGame}
\end{figure}
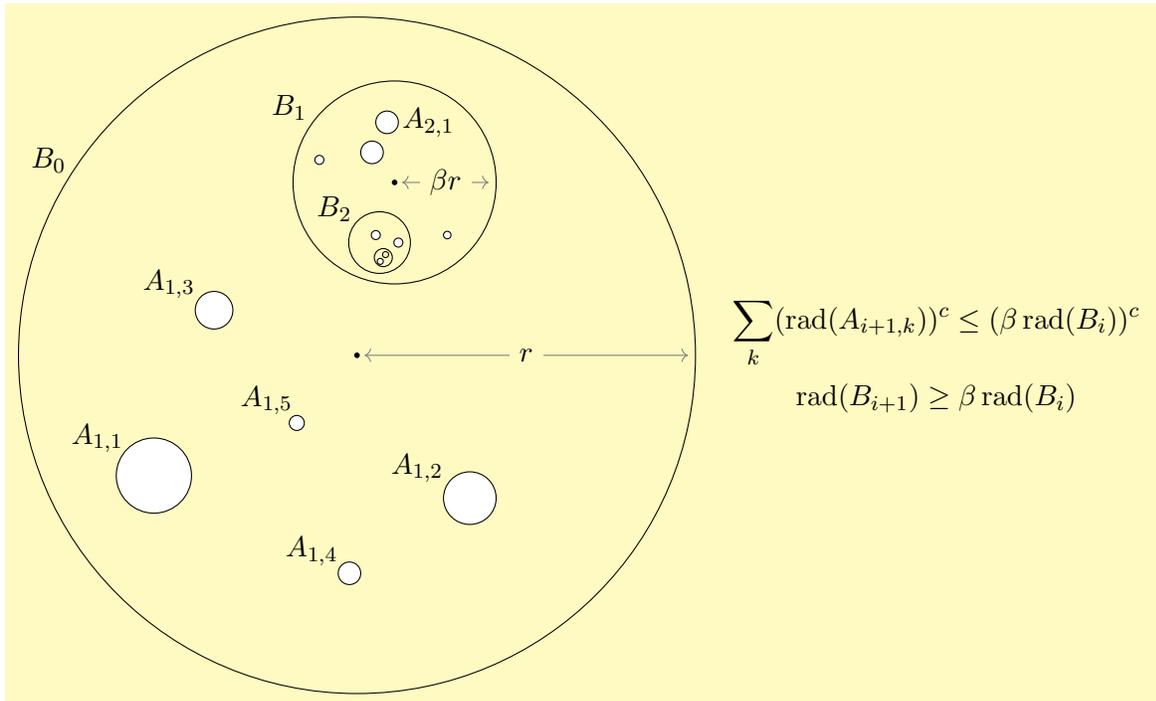

If
\begin{equation}\label{eq:potentialDefault}
\bigcap_{i} \, B_i  \sub  \bigcup_{i}\bigcup_{k} \, A_{i, k}
\end{equation}
or if $\rad\left(B_i\right) \not \rightarrow 0$ we say that Alice wins by default.

A set $E \sub X$ is
said to be \textit{$(c, \beta)$ potential winning} if Alice has a strategy
guaranteeing that either she wins by default or (\ref{eq:outcome}) holds. The \emph{$c_0$ potential game} is defined by allowing Bob to choose in his first turn the parameters $c>c_0$ and $\beta>0$, and then continue with as in the $(c,\beta)$ potential game. A set $E$ is \textit{$c_0$ potential winning} if Alice has a winning strategy for $E$ in the $c_0$ potential game. Note that $E$ is $c_0$ potential winning if and only if $E$ is $(c, \beta)$ potential winning for every $c > c_0$ and $\beta > 0$. If $X$ is Ahlfors $\delta$ regular for some $\delta>0$, the \emph{potential game} is defined by allowing Alice to choose $c_0<\delta$ and then continuing as in the $c_0$ potential game,
and a set $E$ is called \emph{potential winning} if Alice has a winning strategy in the potential game. Note that $E$ is potential winning if and only if it is $c_0$ potential winning for some $c_0<\delta$.

\begin{rem}
The $c_0$ potential game with $c_0=0$ was introduced in \cite[Appendix C]{FSU4} as the potential game. In light of the connections between Cantor, absolute, and potential winning~\S\ref{sec:CantorPotentialconnections} and the geometrical interpretation \S\ref{sec:intersections}, it is natural to adjust the definition that appears there to the one that appears here.
\end{rem}

For the sake of completeness we will verify Properties~(W1) and (W2) for potential winning
sets. Property~(W1) is verified for a class of Ahlfors regular spaces $X$ in Theorem~\ref{propositionfullHD}.

\begin{proposition}\label{w2_potential}
Let $E_1,E_2\sub X$ be $c_1$ and $c_2$ potential winning, respectively. Then $E_1\cap E_2$ is $\max\{c_1,c_2\}$ potential winning.
\end{proposition}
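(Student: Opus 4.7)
The plan is to have Alice play two ``virtual'' copies of the potential game in parallel, one for each target set, and physically place the union of the two resulting collections of balls on each turn. Fix $c_0 = \max\{c_1,c_2\}$, and suppose Bob has opened the $c_0$ potential game by choosing $c > c_0$, $\beta > 0$, and $B_0$. Since $c > c_j$ for $j=1,2$, the hypothesis that $E_j$ is $c_j$ potential winning supplies, for any positive parameter $\beta_j$, a winning strategy $\sigma_j$ for Alice in the $(c,\beta_j)$ potential game targeting $E_j$.

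I would choose $\beta_1 = \beta_2 = 2^{-1/c}\beta$, so that $\beta_1^c + \beta_2^c = \beta^c$ and $\beta_j \le \beta$. Alice's strategy in the real $(c,\beta)$ game is as follows: regard the common sequence $B_0,B_1,B_2,\dots$ of Bob's moves as a legal sequence in each of the two virtual games (legality in the $j$th game follows from $\rad(B_{i+1})\ge \beta\rad(B_i) \ge \beta_j\rad(B_i)$), let $\{A^j_{i+1,k}\}_k$ be the collection of balls prescribed by $\sigma_j$ in response, and have Alice actually play the union $\{A_{i+1,k}\}_k = \{A^1_{i+1,k}\}_k \cup \{A^2_{i+1,k}\}_k$. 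The legality of this combined move is immediate:
\[
\sum_k \rad(A_{i+1,k})^c \le \sum_k \rad(A^1_{i+1,k})^c + \sum_k \rad(A^2_{i+1,k})^c \le (\beta_1^c + \beta_2^c)\rad(B_i)^c = (\beta\rad(B_i))^c.
\]

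To see that Alice wins, I would argue: if $\rad(B_i)\not\to 0$ she wins by default. Otherwise $\bigcap_i B_i = \{p\}$. Because $\sigma_j$ is a winning strategy, for each $j\in\{1,2\}$ one has either $p\in\bigcup_{i,k} A^j_{i+1,k}$ or $p\in E_j$. If $p$ lies in some $A^j_{i+1,k}$ for either $j$, then $p\in\bigcup_{i,k}A_{i+1,k}$ and Alice wins by default; otherwise $p\in E_1\cap E_2$ and the outcome~(\ref{eq:outcome}) holds. Either way, Alice wins in the real $(c,\beta)$ game, and since $c>c_0$ and $\beta>0$ were arbitrary choices of Bob, $E_1\cap E_2$ is $c_0$ potential winning.

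There is essentially no obstacle here beyond bookkeeping: the only point that requires care is the budget split $\beta_1^c+\beta_2^c\le\beta^c$, which allows Alice to afford the union of the two virtual collections on each turn, and the observation that each virtual subgame sees a legal play from Bob. The same trick generalises verbatim to countable intersections, by splitting the budget as $\sum_j \beta_j^c \le \beta^c$.
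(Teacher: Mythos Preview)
Your proof is correct but follows a genuinely different route from the paper. The paper uses an \emph{alternating} scheme: it fixes the strategies for $E_1$ and $E_2$ at the smaller parameter $\beta^2$, and then has Alice play the $E_1$ strategy on even turns and the $E_2$ strategy on odd turns, so that each virtual game sees Bob's moves $B_0,B_2,B_4,\dots$ (respectively $B_1,B_3,\dots$), which are legal in the $(c,\beta^2)$ game because two real steps contract radii by at least $\beta^2$. Legality of each real move then follows from $(\beta^2\rad(B_i))^c\le(\beta\rad(B_i))^c$. Your approach instead plays both strategies on \emph{every} turn and splits the cost budget via $\beta_1^c+\beta_2^c=\beta^c$; the same sequence of Bob's balls is fed to both virtual games, which is legal since $\beta_j\le\beta$.

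Both arguments are equally short for two sets. Your budget-splitting version has the advantage you note: it extends verbatim to countable intersections by choosing $\beta_j$ with $\sum_j\beta_j^c\le\beta^c$, whereas the paper's alternation needs a more elaborate scheduling (and a sequence of parameters $\beta^{g_j}$ for growing gaps $g_j$) to handle infinitely many sets. On the other hand, the paper's interleaving trick is closer in spirit to the classical proof of the countable intersection property for Schmidt's original game, where one also plays the constituent strategies on disjoint blocks of turns.
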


\begin{proof}
Fix arbitrary $c>\max\{c_1,c_2\}$ and $\beta>0$. By assumption, $E_1$ and $E_2$ are both $(c, \beta^2)$ potential winning. Now Alice uses the following strategy to win the $(c,\beta)$ game on the set $E_1\cap E_2$. After Bob's initial move $B_0$ Alice chooses the collection $A_{1,k}$ from her $(c,\beta^2)$ winning strategy for the set $E_1$. After Bob's first move $B_1$ Alice chooses the collection $A^*_{1,k}$ from her $(c,\beta^2)$ winning strategy for the set $E_2$, assuming that Bob's initial move was $B_1$.

In general, after $B_{2m}$ Alice chooses the collection $A_{m,k}$ following her $(c,\beta^2)$ winning strategy for the set $E_1$; after $B_{2m+1}$ she chooses the collection $A^*_{m,k}$ following her $(c,\beta^2)$ winning strategy for the set $E_2$.

It is easy to check that Alice's moves are legal, because
$$
\sum_{k} \rad\left(A_{i+1,k}\right)^c \leq \left(\beta^2 \cdot \rad\left(B_{i}\right)\right)^c\leq \left(\beta \cdot \rad\left(B_{i}\right)\right)^c .
$$
The same inequality is true for the collections $A^*_{i+1,k}$. Also, by construction, either Alice wins by default or $\bigcap_i B_i \subseteq E_1\cap E_2$.
\end{proof}

\subsection{The strong and the weak Schmidt games}
\label{subsectionstrongweak}

One can define further variants of Schmidt's original game. The \textit{strong game} (introduced by McMullen in \cite{McMullen_absolute_winning}) coincides with Schmidt's game except that the equalities~(\ref{scmcond}) are replaced by the inequalities
\begin{equation}
\rad(A_{i+1}) \geq \alpha \cdot \rad(B_i) \textrm{  and  } \rad(B_{i+1}) \geq \beta \cdot \rad(A_{i+1}) \textrm{  for all  } i\geq 0.
\end{equation}

Similarly, we define the \emph{weak game} so that Alice can choose her radii with an inequality but Bob must use equality. Obviously, every weak winning set is winning but the converse is not necessarily true (however, the weak game does not have the intersection property---see later).

One could also define a \emph{very strong game}, where Alice must use equality but Bob may use inequality.

\begin{rem}
It can be shown that a set is strong winning if and only if it is very strong winning, but this fact will not be needed hence we skip its proof.
\end{rem}

Later in the paper we will need the following proposition which relates the
very strong winning property of a set $E$ and the weak winning property of its complement $X\butnot E$.

\begin{proposition}\label{prop_enote}
Let $E\sub X$ satisfy the following property: for any $\beta\in(0,1)$ there exists $\alpha>0$
such that $E$ is $(\alpha,\beta)$ very strong winning. Then $X\butnot E$ is not weak winning.
\end{proposition}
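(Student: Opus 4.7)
The plan is a proof by contradiction. Suppose $X\setminus E$ is weak winning; then there exists some $\alpha^*\in(0,1)$ such that, for every $\beta^*\in(0,1)$, Alice has a winning strategy $\sigma_{\beta^*}$ in the $(\alpha^*,\beta^*)$ weak game targeting $X\setminus E$. Applying the hypothesis on $E$ with the choice $\beta=\alpha^*$, there is some $\alpha\in(0,1)$ and a winning strategy $\tau$ for Alice in the $(\alpha,\alpha^*)$ very strong game targeting $E$. The goal is to construct a single nested sequence of balls which simultaneously constitutes a valid play of both games, producing a single limit point that is forced to lie in both $E$ and $X\setminus E$.

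The key observation is that the two games align under a one-step index shift, once the parameters are paired correctly: take $\beta=\alpha^*$ (already done) and specialise $\beta^*:=\alpha$ in the weak game. Let Bob start the very strong game with any initial ball $B_0$. Treat Alice-VS's response $A_1$ (produced by $\tau$) as the initial ball $B_0^W$ in the weak game; then inductively identify
\begin{equation*}
A_i^W := B_i, \qquad B_i^W := A_{i+1} \qquad (i\geq 1),
\end{equation*}
so that Alice-VS's equalities serve as Bob-W's equalities, while Alice-W's inequalities serve as Bob-VS's inequalities. Concretely, the balls $B_i$ and $A_{i+1}$ are generated in alternation by the two strategies $\tau$ and $\sigma_\alpha$.

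The crux is then to verify legality in both games. In the very strong game the constraints read $\rad(A_{i+1})=\alpha\,\rad(B_i)$ and $\rad(B_i)\geq \alpha^*\,\rad(A_i)$; under the identification these become $\rad(B_i^W)=\alpha\,\rad(A_i^W)$ and $\rad(A_i^W)\geq\alpha^*\,\rad(B_{i-1}^W)$, which are exactly the constraints of the $(\alpha^*,\alpha)$ weak game. Hence every move dictated by $\tau$ is a legal weak Bob move, and every move dictated by $\sigma_\alpha$ is a legal very strong Bob move, so the construction can be carried out indefinitely. The radii tend to zero, yielding a unique common limit point $x$ of the nested balls $\{B_i\}$. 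Since $\tau$ is winning, $x\in E$; since $\sigma_\alpha$ is winning, $x\in X\setminus E$. This contradiction completes the argument.

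The main obstacle is conceptual rather than technical: one must identify the correct one-step shift between the turn orders of the two games, and pair parameters so that equality-type constraints are matched with equality-type constraints and likewise for inequalities. Once this alignment is found, namely $\beta=\alpha^*$ and $\beta^*=\alpha$ under the swap ``very strong Alice $\leftrightarrow$ weak Bob'', the legality check is essentially automatic.
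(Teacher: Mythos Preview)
Your proof is correct and follows essentially the same approach as the paper: the role swap in which Alice's strategy in the $(\alpha,\beta)$ very strong game is played as Bob's strategy in the $(\beta,\alpha)$ weak game (with the one-step shift you identify). The paper phrases this directly as constructing a winning strategy for Bob in the weak game rather than as a proof by contradiction, but the content is identical.
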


\begin{proof}
We need to show that for any $\beta>0$ the set $X\butnot E$ is not $\beta$ weak
winning. In order to do that, we will provide a winning strategy for Bob that
ensures that $\bigcap_{i=0}^\infty B_i \sub E$. On his first turn Bob chooses
$\alpha>0$ such that $E$ is $(\alpha, \beta)$ very strong winning, so that he is now playing the $(\beta,\alpha)$ weak game, and chooses an arbitrary
ball $B_0$. In his subsequent moves Bob follows Alice's winning strategy for the
$(\alpha, \beta)$ very strong game.
\end{proof}

\subsection{Cantor winning}

For the sake of clarity we introduce a specialisation of the
framework presented in \cite{BadziahinHarrap}, tailored to suit our
needs. We first define the notion of a ``splitting structure'' on
$X$. This is in some sense the minimal amount of structure the
metric space needs to allow
the construction of
generalised Cantor sets (cf. \cite{BadziahinHarrap}). Denote by
$\BBB(X)$ the set of all closed balls. We assume that by definition, to specify a closed ball, it is necessary to specify its centre and radius. Note that this means that in some cases two distinct balls may be set-theoretically equal in the sense of containing the same points (see also Remark \ref{rem:closedBalls}). 
 in  $X$ and by $\PPP(\BBB(X))$
the set of all subsets of $\BBB(X)$. A \textit{splitting structure}
is a quadruple~$(X,\SSS, U,f)$, where
\begin{itemize}
    \item $U\sub \NN$ is an infinite multiplicatively closed set such that if
    $u,v\in U$ and $u\mid v$ then $v/u\in U$;
    \item $f\;:\; U\to \NN$ is a totally multiplicative function; 
    \item $\SSS\;:\BBB(X)\times U \to \mathcal P(\BBB(X))$ is a map defined in such a way that for every ball $B\in\BBB(X)$ and
    $u\in U$, the set $\SSS(B,u)$ consists solely of balls $S\sub B$ of
    radius $\rad(B)/u$.
\end{itemize}
Additionally, we require all these objects to be linked by the following properties:
\begin{enumerate}
    \item[(S1)] $\#\SSS(B,u) = f(u)$;
    \item[(S2)] If $S_1,S_2\in \SSS(B,u)$ and $S_1\neq S_2$ then $S_1$ and $S_2$ may only intersect on the
    boundary, i.e. the distance between their centres must be at least $\rad(S_1) + \rad(S_2) = 2\rad(B)/u$;

    \item[(S3)] For all $u,v\in U$,
    $$
    \SSS(B,uv) = \bigcup_{S \,\in \,\SSS(B,u)} \SSS(S,v).
    $$
\end{enumerate}

Broadly speaking, the set $U$ determines the set of possible ratios of radii of balls in the successive levels of the upcoming Cantor set construction. For any fixed $B \in \BBB(X)$, the function $f$ tells us how many balls inside $B$ of a fixed radius we may choose as candidates for the next level in the Cantor set construction, and the sets $\SSS(B, u)$ describe the position of these candidate balls.

If $f \equiv 1$ we say the splitting structure $(X,\SSS, U,f)$ is
trivial. Otherwise, fix a sequence $(u_i)_{i\in\NN}$ with $u_i\in U$
such that $u_i\mid u_{i+1}$ and
$u_i\underset{i\to\infty}{\longrightarrow} \infty$. Then, one can
show~\cite[Theorem 2.1]{BadziahinHarrap} that for any $B_0 \in\BBB(X)$ the limit set
$$
A_\infty(B_0):= \bigcap_{i=1}^\infty \:  \bigcup_{B\, \in\, \SSS(B,u_i)} B
$$
is compact and independent of the choice of sequence $(u_i)_{i\in\NN}$.


\begin{example}
The standard splitting structure on $X=\RR^N$ is with the metric that comes from the sup norm: $\vd(x,y):=\|x-y\|_\infty$. The collection of splittings $\SSS(B,u)$ of any closed ball $B$ according to the integer $u\in U = \NN$, into $f(u)=u^N$ parts, is defined as follows: Cut~$B$ into $u^N$ equal square boxes which edges have length $u$ times less than the edges length of~$B$.
One can
check that $(\RR^N,\SSS, \NN, f)$ satisfies properties (S1)~--~(S3) and is the unique splitting structure for $\left(\RR^N, \vd\right)$ with $U=\NN$ such that $A_\infty(B) = B$ for every $B\in \BBB(X)$.

In a similar manner, the standard splitting structure on $\{0,1\}^\bbn$ is with the standard metric $\vd(x,y):=2^{-\min\left\{i\geq0\sep x_i\neq y_i\right\}}$, $U=\{2^i\sep i\geq0\}$, $f(u)=u$, and
\[
\SSS([x_1,\ldots,x_n],2^i)=\left\{[x_1,\ldots,x_n,y_1,\ldots,y_i]\sep (y_1,\ldots,y_i)\in\{0,1\}^i \right\}
\]
where for any $x\in X$ and $n\geq 0$ we define $[x_1,\ldots,x_n]:=\{y\sep y_k=x_k \textrm{ for all } 1\leq k\leq n\}=B(x,2^{-n})$ to be the cylinder with prequel $(x_1,\ldots,x_n)$, thought of as a ball of radius $2^{-n}$ (the choice of centre is arbitrary).

\end{example}

For any collection $\BBB \sub \mathcal P(\BBB(X))$ of balls and for
any $R\in U$ we will write
$$
\frac{1}{R} \AAA:= \bigcup_{A \in \AAA} \SSS(A, R).
$$
Of course, for higher powers of $R$ we can write $\frac1{R^n} \AAA = \frac1R\left(\cdots\left(\frac1R\AAA\right)\cdots\right)$.


We can now recall the
definition of generalised Cantor sets. 
Fix some closed ball $B_0 \sub X$ and some $R \in U$,
and let
$$
\vr:= (r_{m,n}),\; m,n\in\ZZ_{\ge 0},\; m\le n
$$
be a two-parameter sequence of nonnegative real numbers. 
 Define $\BBB_0:=\{B_0\}$. We start by considering the set
$\frac 1 R\BBB_0$. The first step in the construction of a generalised
Cantor set involves the removal of a collection $\AAA_{0,0} $ of at most $r_{0,0}$ balls from $\frac 1 R\BBB_0$. We label the surviving collection as $\BBB_1$. 
Note that we do not specify here the removed balls, we just give an upper bound for their number.
In general, for a fixed $n\ge 0$, given the collection $\BBB_n$ we construct a
nested collection $\BBB_{n+1}$ as follows: Let
\begin{equation}\label{eq:cantorwinning}
\BBB_{n+1}: = \left(\frac{1}{R} \BBB_n\right) \setminus
\bigcup_{m=0}^n \AAA_{m,n}, \quad \text{where} \quad \AAA_{m,n} \sub
\frac{1}{R^{n-m+1}} \BBB_m  \quad (0 \leq m \leq n)
\end{equation}
are collections of removed balls satisfying for every $B \in \BBB_m$ the
inequality
$$\# \AAA_{m,n}(B)  \leq r_{m,n}, \quad \text{where } \quad \AAA_{m,n}(B):=\left\{ A \in \AAA_{m,n}: \, A \sub B   \right\}.$$
For a pictorial demonstration  in $\R^2$ of such a construction, see Figure \ref{fig:cantorWinning},  in which boxes removed in the current step are coloured red.


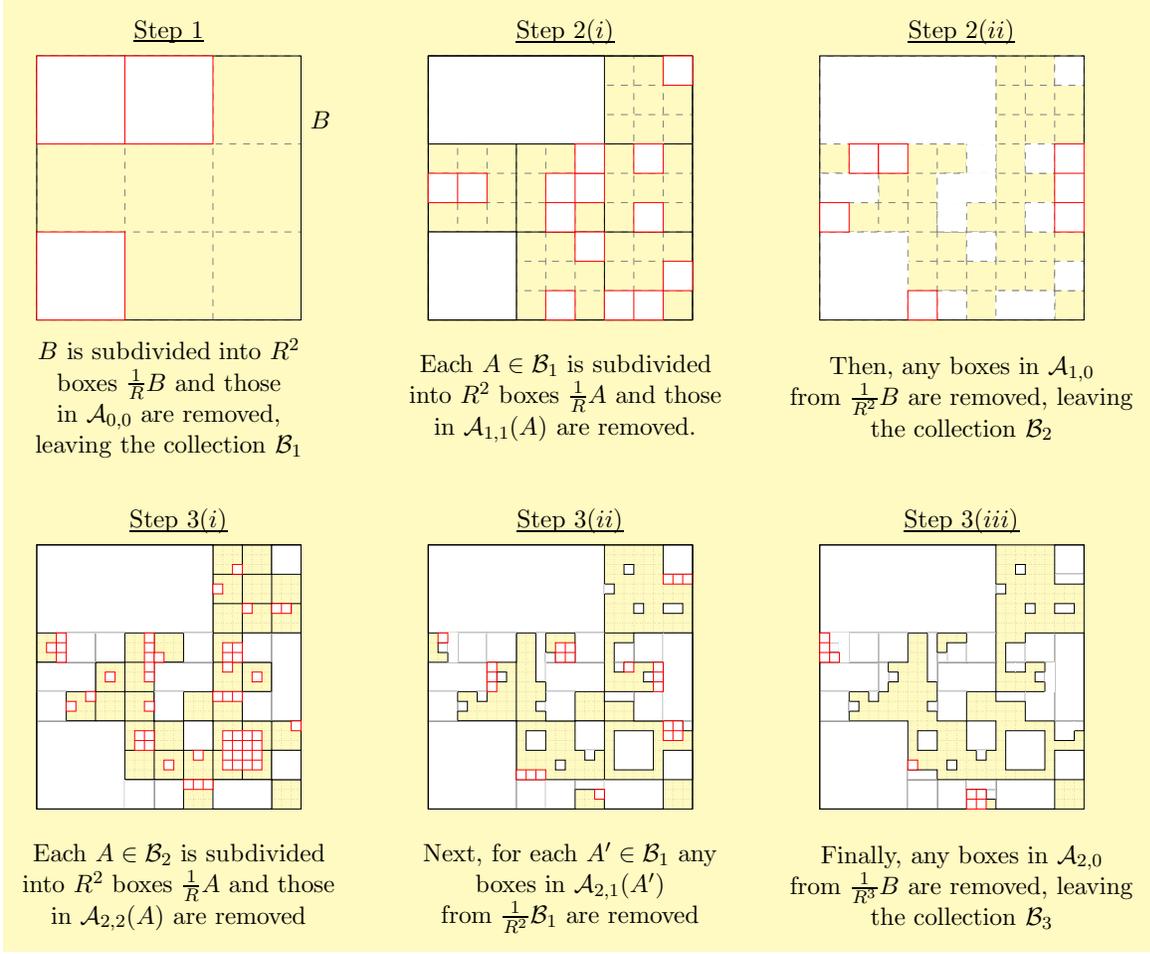
\begin{figure}[!t]\resizebox{\linewidth}{!}{
\begin{tikzpicture}[background rectangle/.style={fill=yellow!30}, show background rectangle, scale = 0.75]




\draw[help lines, gray, very thin, dashed, step=1.8] (0,0) grid (5.4,5.4);

\draw (0,0) rectangle (5.4, 5.4); 

\draw[red, fill=white] (0,0) rectangle (1.8, 1.8); 
\draw[red, fill=white] (0,3.6) rectangle (1.8, 5.4); 
\draw[red, fill=white] (1.8,3.6) rectangle (3.6, 5.4); 

\node at (5.8, 4.1) {$B$}; 

\node at (2.7, 5.9) {\underbar{Step $1$}};
\node[align=center] at (2.7, -1.6) {$B$ is subdivided into $R^2$ \\ boxes $\frac 1 R B$ and those \\ in $\mathcal A_{0,0}$ are removed, \\ leaving the collection $\mathcal B_1$};



\draw[help lines, gray, very thin, dashed, step=0.6, xshift=-1cm] (9,0) grid (14.4,5.4);
\draw[help lines, black, very thin, step=1.8, xshift=-1cm] (9,0) grid (14.4,5.4);

\draw[black, very thin, fill=white] (8,0) rectangle (9.8, 1.8); 
\draw[black, very thin, fill=white] (8,3.6) rectangle (11.6, 5.4); 

\draw (8,0) rectangle (13.4, 5.4); 


\draw[red, fill=white] (10.4,0) rectangle (11, 0.6); 
\draw[red, fill=white] (11, 1.2) rectangle (11.6, 1.8); 
\draw[red, fill=white] (8, 2.4) rectangle (9.2, 3); 
\draw[red] (8.6, 2.4) -- (8.6, 3);
\draw[red, fill=white] (10.4,2.4) rectangle (11.6, 3); 
\draw[red] (11, 2.4) -- (11, 3);
\draw[red, fill=white] (11, 3) rectangle (11.6, 3.6); 
\draw[red, fill=white] (10.4, 1.8) rectangle (11, 2.4); 
\draw[red, fill=white] (12.2,1.8) rectangle (12.8, 2.4); 
\draw[red, fill=white] (12.2, 3) rectangle (12.8, 3.6); 
\draw[red, fill=white] (12.8, 4.8) rectangle (13.4, 5.4); 
\draw[red, fill=white] (11.6, 0) rectangle (12.8, 0.6); 
\draw[red] (12.2, 0) -- (12.2, 0.6);
\draw[red, fill=white] (12.8, 0.6) rectangle (13.4, 1.2); 


\node at (10.8, 5.9) {\underbar{Step $2(i)$}};
\node[align=center] at (10.8, -1.6) {Each $A \in \mathcal B_1$ is subdivided
\\ into $R^2$ boxes $\frac 1 R A$ and those  \\ in $\mathcal A_{1,1}(A)$ are removed. 
};



\draw[help lines, gray, very thin, dashed, step=0.6, xshift=-2cm] (18,0) grid (23.4,5.4);

\draw[gray, dashed, ultra thin, fill=white] (16,0) rectangle (17.8, 1.8); 
\draw[gray, dashed, ultra thin, fill=white] (16,3.6) rectangle (19.6, 5.4); 

\draw[gray, dashed, ultra thin, fill=white] (18.4,0) rectangle (19, 0.6); 
\draw[gray, dashed, ultra thin, fill=white] (19, 1.2) rectangle (19.6, 1.8); 
\draw[gray, dashed, ultra thin, fill=white] (16, 2.4) rectangle (17.2, 3); 
\draw[gray, dashed, ultra thin, fill=white] (18.4,2.4) rectangle (19.6, 3); 
\draw[gray, dashed, ultra thin, fill=white] (19, 3) rectangle (19.6, 3.6); 
\draw[gray, dashed, ultra thin, fill=white] (18.4, 1.8) rectangle (19, 2.4); 
\draw[gray, dashed, ultra thin, fill=white] (20.2,1.8) rectangle (20.8, 2.4); 
\draw[gray, dashed, ultra thin, fill=white] (20.2, 3) rectangle (20.8, 3.6); 
\draw[gray, dashed, ultra thin, fill=white] (20.8, 4.8) rectangle (21.4, 5.4); 
\draw[gray, dashed, ultra thin, fill=white] (19.6, 0) rectangle (20.8, 0.6); 
\draw[gray, dashed, ultra thin, fill=white] (20.8, 0.6) rectangle (21.4, 1.2); 

\draw[white, thick] (17.2, 3.6) -- (17.8, 3.6);
\draw[white, thick] (19, 3.6) -- (19.6, 3.6);
\draw[white, thick] (18.4, 2.4) -- (19, 2.4);
\draw[white, thick] (19, 3) -- (19.6, 3);

\draw (16,0) rectangle (21.4, 5.4); 

\draw[red, fill=white] (16, 1.8) rectangle (16.6, 2.4); 
\draw[red, fill=white] (16.6, 3) rectangle (17.2, 3.6); 
\draw[red, fill=white] (17.8, 0) rectangle (18.4, 0.6); 
\draw[red, fill=white] (17.2, 3) rectangle (17.8, 3.6); 
\draw[red, fill=white] (20.8, 1.8) rectangle (21.4, 3.6); 
\draw[red] (20.8, 2.4) -- (21.4, 2.4);
\draw[red] (20.8, 3) -- (21.4, 3);


\node at (18.9, 5.9) {\underbar{Step $2(ii)$}};
\node[align=center] at (18.9, -1.6) {Then, any boxes in $\mathcal A_{1,0}$ \\ from $\frac 1 {R^2} B$ are removed, leaving \\ the collection $\mathcal B_2$};


\draw[help lines, gray, ultra thin, step=0.2, densely dotted] (0,-10) grid (5.4,-4.6);
\draw[help lines, black, very thin, step=0.6, yshift=-1cm] (0,-9) grid (5.4,-3.6);
%
\draw[black, ultra thin, fill=white] (0,-10) rectangle (1.8, -8.2); 
\draw[black, ultra thin, fill=white] (0, -6.4) rectangle (3.6, -4.6); 
%
%
%
\draw[black, ultra thin, fill=white] (2.4,-10) rectangle (3, -9.4); 
\draw[black, ultra thin, fill=white] (3, -8.8) rectangle (3.6, -8.2); 
\draw[black, ultra thin, fill=white] (0, -7.6) rectangle (1.2, -7); 
\draw[black, ultra thin, fill=white] (2.4, -7.6) rectangle (3.6, -7); 
\draw[black, ultra thin, fill=white] (3, -7) rectangle (3.6, -6.4); 
\draw[black, ultra thin, fill=white] (2.4, -8.2) rectangle (3, -7.6); 
\draw[black, ultra thin, fill=white] (4.2, -8.2) rectangle (4.8, -7.6); 
\draw[black, ultra thin, fill=white] (4.2, -7) rectangle (4.8, -6.4); 
\draw[black, ultra thin, fill=white] (4.8, -5.2) rectangle (5.4, -4.6); 
\draw[black, ultra thin, fill=white] (3.6, -10) rectangle (4.8, -9.4); 
\draw[black, ultra thin, fill=white] (4.8, -9.4) rectangle (5.4, -8.8); 
%
%
%
\draw[black, ultra thin, fill=white] (0, -8.2) rectangle (0.6, -7.6); 
\draw[black, ultra thin, fill=white] (0.6, -7) rectangle (1.2, -6.4); 
\draw[black, ultra thin, fill=white](1.8, -10) rectangle (2.4, -9.4); 
\draw[black, ultra thin, fill=white] (1.2, -7) rectangle (1.8, -6.4); 
\draw[black, ultra thin, fill=white] (4.8, -8.2) rectangle (5.4, -6.4); 

\draw[white, thin] (1.2, -6.4) -- (1.8, -6.4);
\draw[white, thin] (3, -6.4) -- (3.6, -6.4);
\draw[white, thin] (2.4, -7.6) -- (3, -7.6);
\draw[white, thin] (3, -7) -- (3.6, -7);
\draw[white, thin] (0, -8.2) -- (0.6, -8.2);
\draw[white, thin] (0, -7.6) -- (0.6, -7.6);
\draw[white, thin] (0.6, -7) -- (1.2, -7);
\draw[white, thin] (0.6, -6.4) -- (1.2, -6.4);
\draw[white, thin] (1.2, -7) -- (1.2, -6.4);
\draw[white, thin] (1.8, -10) -- (1.8, -9.4);
\draw[white, thin] (2.4, -10) -- (2.4, -9.4);
\draw[white, thin] (4.8, -8.2) -- (4.8, -7.6);
\draw[white, thin] (4.8, -7) -- (4.8, -6.4);

\draw (0,-10) rectangle (5.4, -4.6); 

\draw[red, fill=white] (0.6,-8) rectangle (0.8, -7.8); 
\draw[red, fill=white] (1, -7.8) rectangle (1.2, -7.6); 
\draw[red, fill=white] (0.2, -6.8) rectangle (0.4, -6.6); 
\draw[red, fill=white] (0.4, -7) rectangle (0.6, -6.4); 
\draw[red] (0.4, -6.8) -- (0.6, -6.8);
\draw[red] (0.4, -6.6) -- (0.6, -6.6);
\draw[red, fill=white] (1.4, -7.4) rectangle (1.6, -7.2); 

\draw[red, fill=white] (2,-8.8) rectangle (2.4, -8.4); 
\draw[red] (2.2, -8.8) -- (2.2, -8.4);
\draw[red] (2, -8.6) -- (2.4, -8.6);
\draw[red, fill=white] (2.6, -9.2) rectangle (2.8, -9); 
\draw[red, fill=white] (2.2, -8) rectangle (2.4, -7.8); 
\draw[red, fill=white] (3.2, -9) rectangle (3.4, -8.8); 
\draw[red, fill=white] (3, -9.6) rectangle (3.6, -9.4); 
\draw[red] (3.2, -9.6) -- (3.2, -9.4);
\draw[red] (3.4, -9.6) -- (3.4, -9.4);

\draw[red, fill=white] (3.8, -7) rectangle (4.2, -6.6); 
\draw[red] (4, -7) -- (4, -6.6);
\draw[red] (3.8, -6.8) -- (4.2, -6.8);
\draw[red, fill=white] (4.4, -7.4) rectangle (4.6, -7.2); 
\draw[red, fill=white] (5.2, -8.4) rectangle (5.4, -8.2); 
\draw[red, fill=white] (3.8, -7.2) rectangle (4, -7); 
\draw[red, fill=white] (3.6, -7.8) rectangle (4.2, -7.6); 
\draw[red] (3.8, -7.8) -- (3.8, -7.6);
\draw[red] (4, -7.8) -- (4, -7.6);
\draw[red, fill=white] (4, -5.2) rectangle (4.2, -5);
\draw[red, fill=white] (3.6, -5.6) rectangle (3.8, -5.4); 
\draw[red, fill=white] (4.2, -6) rectangle (4.4, -5.8); 
\draw[red, fill=white] (4.8, -6) rectangle (5.2, -5.8); 
\draw[red] (5, -6) -- (5, -5.8);
\draw[red] (5.2, -6) -- (5.2, -5.8);

\draw[red, fill=white] (2.2, -7.4) rectangle (2.4, -6.4); 
\draw[red] (2.2, -7.2) -- (2.4, -7.2);
\draw[red] (2.2, -7) -- (2.4, -7);
\draw[red] (2.2, -6.8) -- (2.4, -6.8);
\draw[red] (2.2, -6.6) -- (2.4, -6.6);
\draw[red, fill=white] (2.4, -7) rectangle (2.6, -6.8); 

\draw[red, fill=white] (3.8, -9.2) rectangle (4.6, -8.4); 
\draw[red] (3.8, -9) -- (4.6, -9);
\draw[red] (3.8, -8.8) -- (4.6, -8.8);
\draw[red] (3.8, -8.6) -- (4.6, -8.6);
\draw[red] (4, -9.2) -- (4, -8.4);
\draw[red] (4.2, -9.2) -- (4.2, -8.4);
\draw[red] (4.4, -9.2) -- (4.4, -8.4);


\node at (2.9, -4.1) {\underbar{Step $3(i)$}};
\node[align=center] at (2.9, -11.6) {Each $A \in \mathcal B_2$ is subdivided
\\ into $R^2$ boxes $\frac 1 R A$ and those \\ in $\mathcal A_{2,2}(A)$ are removed};


\draw[help lines, gray, ultra thin, step=0.2,densely dotted, xshift=-1cm] (9,-10) grid (14.4,-4.6);
\draw[help lines, black, very thin, step=1.8, xshift=-1cm, yshift=-1cm] (9,-9) grid (14.4,-3.6);

\draw[black, ultra thin, fill=white] (8,-10) rectangle (9.8, -8.2); 
\draw[black, ultra thin, fill=white] (8, -6.4) rectangle (11.6, -4.6); 
%
%
%
\draw[black, ultra thin, fill=white] (10.4,-10) rectangle (11, -9.4); 
\draw[black, ultra thin, fill=white] (11, -8.8) rectangle (11.6, -8.2); 
\draw[black, ultra thin, fill=white] (8, -7.6) rectangle (9.2, -7); 
\draw[black, ultra thin, fill=white] (10.4, -7.6) rectangle (11.6, -7); 
\draw[black, ultra thin, fill=white] (11, -7) rectangle (11.6, -6.4); 
\draw[black, ultra thin, fill=white] (10.4, -8.2) rectangle (11, -7.6); 
\draw[black, ultra thin, fill=white] (12.2, -8.2) rectangle (12.8, -7.6); 
\draw[black, ultra thin, fill=white] (12.2, -7) rectangle (12.8, -6.4); 
\draw[black, ultra thin, fill=white] (12.8, -5.2) rectangle (13.4, -4.6); 
\draw[black, ultra thin, fill=white] (11.6, -10) rectangle (12.8, -9.4); 
\draw[black, ultra thin, fill=white] (12.8, -9.4) rectangle (13.4, -8.8); 
%
%
%
\draw[black, ultra thin, fill=white] (8, -8.2) rectangle (8.6, -7.6); 
\draw[black, ultra thin, fill=white] (8.6, -7) rectangle (9.2, -6.4); 
\draw[black, ultra thin, fill=white](9.8, -10) rectangle (10.4, -9.4); 
\draw[black, ultra thin, fill=white] (9.2, -7) rectangle (9.8, -6.4); 
\draw[black, ultra thin, fill=white] (12.8, -8.2) rectangle (13.4, -6.4); 

\draw[white, thin] (9.2, -6.4) -- (9.8, -6.4);
\draw[white, thin] (11, -6.4) -- (11.6, -6.4);
\draw[white, thin] (10.4, -7.6) -- (11, -7.6);
\draw[white, thin] (11, -7) -- (11.6, -7);
\draw[white, thin] (8, -8.2) -- (8.6, -8.2);
\draw[white, thin] (8, -7.6) -- (8.6, -7.6);
\draw[white, thin] (8.6, -7) -- (9.2, -7);
\draw[white, thin] (8.6, -6.4) -- (9.2, -6.4);
\draw[white, thin] (9.2, -7) -- (9.2, -6.4);
\draw[white, thin] (9.8, -10) -- (9.8, -9.4);
\draw[white, thin] (10.4, -10) -- (10.4, -9.4);
\draw[white, thin] (12.8, -8.2) -- (12.8, -7.6);
\draw[white, thin] (12.8, -7) -- (12.8, -6.4);

\draw[black, ultra thin, fill=white] (8.6,-8) rectangle (8.8, -7.8); 
\draw[black, ultra thin, fill=white] (9, -7.8) rectangle (9.2, -7.6); 
\draw[black, ultra thin, fill=white] (8.2, -6.8) rectangle (8.4, -6.6); 
\draw[white, ultra thin, fill=white] (8.4, -7) rectangle (8.6, -6.4); 
\draw[black, ultra thin, fill=white] (9.4, -7.4) rectangle (9.6, -7.2); 

\draw[black, ultra thin, fill=white] (10,-8.8) rectangle (10.4, -8.4); 
\draw[black, ultra thin, fill=white] (10.6, -9.2) rectangle (10.8, -9); 
\draw[black, ultra thin, fill=white](10.2, -8) rectangle (10.4, -7.8); 
\draw[black, ultra thin, fill=white] (11.2, -9) rectangle (11.4, -8.8); 
\draw[black, ultra thin, fill=white] (11, -9.6) rectangle (11.6, -9.4); 

\draw[black, ultra thin, fill=white] (11.8, -7) rectangle (12.2, -6.6); 
\draw[black, ultra thin, fill=white] (12.4, -7.4) rectangle (12.6, -7.2); 
\draw[black, ultra thin, fill=white] (13.2, -8.4) rectangle (13.4, -8.2); 
\draw[black, ultra thin, fill=white] (11.8, -7.2) rectangle (12, -7); 
\draw[black, ultra thin, fill=white] (11.6, -7.8) rectangle (12.2, -7.6); 
\draw[black, ultra thin, fill=white] (12, -5.2) rectangle (12.2, -5);
\draw[black, ultra thin, fill=white] (11.6, -5.6) rectangle (11.8, -5.4); 
\draw[black, ultra thin, fill=white] (12.2, -6) rectangle (12.4, -5.8); 
\draw[black, ultra thin, fill=white] (12.8, -6) rectangle (13.2, -5.8); 

\draw[black, ultra thin, fill=white] (10.2, -7.4) rectangle (10.4, -6.4); 
\draw[black, ultra thin, fill=white] (10.4, -7) rectangle (10.6, -6.8); 

\draw[black, ultra thin, fill=white] (11.8, -9.2) rectangle (12.6, -8.4); 

\draw[black, ultra thin] (8.4, -7) -- (8.4, -6.8);
\draw[white, thin] (8.6, -8) -- (8.6, -7.8);
\draw[white, thin] (9, -7.6) -- (9.2, -7.6);
\draw[white, thin] (10.2, -6.4) -- (10.4, -6.4);
\draw[white, thin] (10.4, -7.4) -- (10.4, -6.8);
\draw[white, thin] (10.4, -7) -- (10.6, -7);
\draw[white, thin] (10.4, -8) -- (10.4, -7.8);
\draw[white, thin] (11, -9.6) -- (11, -9.4);
\draw[white, thin] (11.6, -9.6) -- (11.6, -9.4);
\draw[white, thin] (11.6, -5.6) -- (11.6, -5.4);
\draw[white, thin] (11.2, -8.8) -- (11.4, -8.8);
\draw[white, thin] (12.2, -7) -- (12.2, -6.6);
\draw[white, thin] (11.8, -7) -- (12, -7);
\draw[white, thin] (12.2, -7.8) -- (12.2, -7.6);
\draw[white, thin] (13.2, -8.2) -- (13.4, -8.2);

\draw[red, fill=white] (8.2,-6.6) rectangle (8.4, -6.4); 
\draw[red, fill=white] (9.2, -7.6) rectangle (9.4, -7); 
\draw[red] (9.2, -7.4) -- (9.4, -7.4);
\draw[red] (9.2, -7.2) -- (9.4, -7.2);

\draw[red, fill=white] (10.6, -7) rectangle (11, -6.6); 
\draw[red] (10.6, -6.8) -- (11, -6.8);
\draw[red] (10.8, -7) -- (10.8, -6.6);

\draw[red, fill=white] (9.8, -9.4) rectangle (10.4, -9.2); 
\draw[red] (10, -9.4) -- (10, -9.2);
\draw[red] (10.2, -9.4) -- (10.2, -9.2);
\draw[red, fill=white] (11.4,-9.8) rectangle (11.6, -9.6); 

\draw[red, fill=white] (12.8, -5.4) rectangle (13.4, -5.2); 
\draw[red] (13, -5.4) -- (13, -5.2);
\draw[red] (13.2, -5.4) -- (13.2, -5.2);

\draw[red, fill=white] (12.6, -7.6) rectangle (12.8, -7); 
\draw[red] (12.6, -7.2) -- (12.8, -7.2);
\draw[red] (12.6, -7.4) -- (12.8, -7.4);
\draw[red, fill=white] (12, -7.2) rectangle (12.2, -7); 

\draw[red, fill=white] (12.8,-8.6) rectangle (13.2, -8.2); 
\draw[red] (12.8, -8.4) -- (13.2, -8.4);
\draw[red] (13, -8.6) -- (13, -8.2);

\draw (8,-10) rectangle (13.4, -4.6); 

%

\node at (10.9, -4.1) {\underbar{Step $3(ii)$}};
\node[align=center] at (10.9, -11.6) {Next, for each $A' \in \mathcal B_1$ any \\ boxes  in $\mathcal A_{2,1}(A')$  \\  from $ \frac 1 {R^2} \mathcal B_1$ are removed};


\draw[help lines, gray, ultra thin, densely dotted, step=0.2, xshift=-2cm] (18,-10) grid (23.4,-4.6);

\draw[black, ultra thin, fill=white] (16,-10) rectangle (17.8, -8.2); 
\draw[black, ultra thin, fill=white] (16, -6.4) rectangle (19.6, -4.6); 
%
%
%
\draw[black, ultra thin, fill=white] (18.4,-10) rectangle (19, -9.4); 
\draw[black, ultra thin, fill=white] (19, -8.8) rectangle (19.6, -8.2); 
\draw[black, ultra thin, fill=white] (16, -7.6) rectangle (17.2, -7); 
\draw[black, ultra thin, fill=white] (18.4, -7.6) rectangle (19.6, -7); 
\draw[black, ultra thin, fill=white] (19, -7) rectangle (19.6, -6.4); 
\draw[black, ultra thin, fill=white] (18.4, -8.2) rectangle (19, -7.6); 
\draw[black, ultra thin, fill=white] (20.2, -8.2) rectangle (20.8, -7.6); 
\draw[black, ultra thin, fill=white] (20.2, -7) rectangle (20.8, -6.4); 
\draw[black, ultra thin, fill=white] (20.8, -5.2) rectangle (21.4, -4.6); 
\draw[black, ultra thin, fill=white] (19.6, -10) rectangle (20.8, -9.4); 
\draw[black, ultra thin, fill=white] (20.8, -9.4) rectangle (21.4, -8.8); 
%
%
%
\draw[black, ultra thin, fill=white] (16, -8.2) rectangle (16.6, -7.6); 
\draw[black, ultra thin, fill=white] (16.6, -7) rectangle (17.2, -6.4); 
\draw[black, ultra thin, fill=white](17.8, -10) rectangle (18.4, -9.4); 
\draw[black, ultra thin, fill=white] (17.2, -7) rectangle (17.8, -6.4); 
\draw[black, ultra thin, fill=white] (20.8, -8.2) rectangle (21.4, -6.4); 

\draw[white, thin] (17.2, -6.4) -- (17.8, -6.4);
\draw[white, thin] (19, -6.4) -- (19.6, -6.4);
\draw[white, thin] (18.4, -7.6) -- (19, -7.6);
\draw[white, thin] (19, -7) -- (19.6, -7);
\draw[white, thin] (16, -8.2) -- (16.6, -8.2);
\draw[white, thin] (16, -7.6) -- (16.6, -7.6);
\draw[white, thin] (16.6, -7) -- (17.2, -7);
\draw[white, thin] (16.6, -6.4) -- (17.2, -6.4);
\draw[white, thin] (17.2, -7) -- (17.2, -6.4);
\draw[white, thin] (17.8, -10) -- (17.8, -9.4);
\draw[white, thin] (18.4, -10) -- (18.4, -9.4);
\draw[white, thin] (20.8, -8.2) -- (20.8, -7.6);
\draw[white, thin] (20.8, -7) -- (20.8, -6.4);

%
%
\draw[black, ultra thin, fill=white] (16.6,-8) rectangle (16.8, -7.8); 
\draw[black, ultra thin, fill=white] (17, -7.8) rectangle (17.2, -7.6); 
\draw[black, ultra thin, fill=white] (16.2, -6.8) rectangle (16.4, -6.6); 
\draw[white, ultra thin, fill=white] (16.4, -7) rectangle (16.6, -6.4); 
\draw[black, ultra thin, fill=white] (17.4, -7.4) rectangle (17.6, -7.2); 

\draw[black, ultra thin, fill=white] (18,-8.8) rectangle (18.4, -8.4); 
\draw[black, ultra thin, fill=white] (18.6, -9.2) rectangle (18.8, -9); 
\draw[black, ultra thin, fill=white](18.2, -8) rectangle (18.4, -7.8); 
\draw[black, ultra thin, fill=white] (19.2, -9) rectangle (19.4, -8.8); 
\draw[black, ultra thin, fill=white] (19, -9.6) rectangle (19.6, -9.4); 

\draw[black, ultra thin, fill=white] (19.8, -7) rectangle (20.2, -6.6); 
\draw[black, ultra thin, fill=white] (20.4, -7.4) rectangle (20.6, -7.2); 
\draw[black, ultra thin, fill=white] (21.2, -8.4) rectangle (21.4, -8.2); 
\draw[black, ultra thin, fill=white] (19.8, -7.2) rectangle (20, -7); 
\draw[black, ultra thin, fill=white] (19.6, -7.8) rectangle (20.2, -7.6); 
\draw[black, ultra thin, fill=white] (20, -5.2) rectangle (20.2, -5);
\draw[black, ultra thin, fill=white] (19.6, -5.6) rectangle (19.8, -5.4); 
\draw[black, ultra thin, fill=white] (20.2, -6) rectangle (20.4, -5.8); 
\draw[black, ultra thin, fill=white] (20.8, -6) rectangle (21.2, -5.8); 

\draw[black, ultra thin, fill=white] (18.2, -7.4) rectangle (18.4, -6.4); 
\draw[black, ultra thin, fill=white] (18.4, -7) rectangle (18.6, -6.8); 

\draw[black, ultra thin, fill=white] (19.8, -9.2) rectangle (20.6, -8.4); 

\draw[white, ultra thin, fill=white] (16.2,-6.6) rectangle (16.4, -6.4); 
\draw[black, ultra thin, fill=white] (17.2, -7.6) rectangle (17.4, -7); 
\draw[black, ultra thin, fill=white] (18.6, -7) rectangle (19, -6.6); 
\draw[black, ultra thin, fill=white] (17.8, -9.4) rectangle (18.4, -9.2); 
\draw[black, ultra thin, fill=white] (19.4,-9.8) rectangle (19.6, -9.6); 
\draw[black, ultra thin, fill=white]  (20.8, -5.4) rectangle (21.4, -5.2); 
\draw[white, ultra thin, fill=white] (20.6, -7.6) rectangle (20.8, -7); 
\draw[black, ultra thin, fill=white] (20, -7.2) rectangle (20.2, -7); 
\draw[black, ultra thin, fill=white] (20.8,-8.6) rectangle (21.2, -8.2); 

\draw[black, ultra thin,] (16.4, -7) -- (16.4, -6.8);
\draw[black, ultra thin,] (16.2, -6.6) -- (16.2, -6.4);
\draw[white, thin] (16.6, -8) -- (16.6, -7.8);
\draw[white, thin] (17, -7.6) -- (17.2, -7.6);
\draw[white, thin] (18.2, -6.4) -- (18.4, -6.4);
\draw[white, thin] (18.4, -7.4) -- (18.4, -6.8);
\draw[white, thin] (18.4, -7) -- (18.6, -7);
\draw[white, thin] (18.4, -8) -- (18.4, -7.8);
\draw[white, thin] (19, -9.6) -- (19, -9.4);
\draw[white, thin] (19.6, -9.6) -- (19.6, -9.4);
\draw[white, thin] (19.6, -5.6) -- (19.6, -5.4);
\draw[white, thin] (19.2, -8.8) -- (19.4, -8.8);
\draw[white, thin] (20.2, -7) -- (20.2, -6.6);
\draw[white, thin] (19.8, -7) -- (20, -7);
\draw[white, thin] (20.2, -7.8) -- (20.2, -7.6);
\draw[white, thin] (21.2, -8.2) -- (21.4, -8.2);

\draw[white, thin] (17.2, -7.6) -- (17.2, -7);
\draw[white, thin] (17.2, -7) -- (17.4, -7);
\draw[white, thin] (17.4, -7.4) -- (17.4, -7.2);
\draw[white, thin] (17.8, -9.4) -- (17.8, -9.2);
\draw[white, thin] (17.8, -9.4) -- (18.4, -9.4);
\draw[white, thin] (18.6, -7) -- (18.6, -6.8);
\draw[white, thin] (18.6, -7) -- (19, -7);
\draw[white, thin] (19, -7) -- (19, -6.6);
\draw[white, thin] (19.4, -9.6) -- (19.6, -9.6);
\draw[white, thin] (19.6, -9.6) -- (19.6, -9.8);
\draw[white, thin] (20.8, -5.2) -- (21.4, -5.2);
\draw[white, thin] (20, -7.2) -- (20, -7);
\draw[white, thin] (20, -7) -- (20.2, -7);
\draw[black, ultra thin,] (20.6, -7) -- (20.6, -7.2);
\draw[black, ultra thin,] (20.6, -7.4) -- (20.6, -7.6);
\draw[white, thin] (20.8, -8.2) -- (21.2, -8.2);
\draw[white, thin] (21.2, -8.2) -- (21.2, -8.4);

\draw (16,-10) rectangle (21.4, -4.6); 

\draw[red, fill=white] (16.2,-7) rectangle (16.4, -6.8); 
\draw[red, fill=white] (16, -7) rectangle (16.2, -6.4); 
\draw[red] (16, -6.8) -- (16.2, -6.8);
\draw[red] (16, -6.6) -- (16.2, -6.6);

\draw[red, fill=white] (19, -10) rectangle (19.4, -9.6); 
\draw[red] (19, -9.8) -- (19.4, -9.8);
\draw[red] (19.2, -10) -- (19.2, -9.6);

\draw[red, fill=white] (17.8,-9.2) rectangle (18, -9);  

%
%

\node at (18.9, -4.1) {\underbar{Step $3(iii)$}};
\node[align=center] at (18.9, -11.6) {Finally, any boxes in  $\mathcal A_{2,0}$ \\ from $ \frac 1 {R^3} B$ are removed, leaving \\ the collection $\mathcal B_3$};

\end{tikzpicture}
}
\caption{Construction of a Cantor winning set 
	}
		\label{fig:cantorWinning}
\end{figure}


Define the \emph{limit set} of such a construction to be 
\begin{equation}\label{eq:limitset}
\bigcap_{n=0}\bigcup_{B\in\calb_n}B
\end{equation}
and denote it by $\KKK(B_0, R,\vr)$. 
Note that the triple $(B_0,R,\vr)$ does not uniquely determine $\KKK(B_0,R,\vr)$ as there is a large degree of freedom in the sets of balls $\AAA_{m,n}$ removed in the construction procedure. We say a set $\KKK$ is a \textit{generalised Cantor set} if it can be constructed by the procedure described above for some triple $(B_0,R,\vr)$. In this case, we may refer to $\KKK$ as being \textit{$(B_0,R,\vr)$ Cantor} if we wish to make such a triple explicit and write $\KKK=\KKK(B_0,R,\vr)$.

\begin{example}
\label{examplebasic}
In the case that $r_{m,n}=0$ for every $m < n$, we refer to a $(B_0,R,\vr)$ Cantor set as a \textit{local Cantor set} and denote it simply by $\KKK(B_0,R, \left\{r_n\right\})$, where $r_n:=r_{n,n}$. The simplest class of local Cantor sets occur when the sequence $\left\{r_n\right\}$ is taken to be constant; i.e., $r_n:=r$. Such $(B_0,R, r)$ Cantor sets are well studied and their measure theoretic properties are well known.
\end{example}

\begin{example}
The 
middle-third Cantor set $\calk$ is an $(I,3,1)$ Cantor set in $\RR$, and $\calk^2$ is a $\left(I^2,3,5\right)$ Cantor set in $\RR^2$.
\end{example}



\begin{defn}\label{def:cantorWinning}
Fix a ball $B_0 \sub X$. Given a parameter $\eps_0 \in (0,1]$ we say a
set $E \sub X$ is  \emph{$\eps_0$ Cantor winning on $B_0$ (with respect to the splitting structure $(X, \SSS, U, f)$)} if for every
$0<\eps<\eps_0$ there exists $R_\eps \in U$ such that for every
$R_\eps\leq R\in U$ the set $E$ contains
a $(B_0,R,\vr)$ Cantor set satisfying
\begin{equation*}
    r_{m,n} \:\leq \: f(R)^{(n-m+1)(1-\eps)}\quad\mbox{for every }m,n\in\NN,\: m\le n.
\end{equation*}

If a set $E \sub X$ is $\eps_0$ Cantor winning on~$B_0$ for every
ball $B_0\sub X$ then we say $E$ is \emph{$\eps_0$ Cantor winning}, and simply that $E$ is \emph{Cantor winning} if
it is $\eps_0$ Cantor winning for some $\eps_0 \in (0,1]$.
\end{defn}

It is obvious that if a set is $\eps_0$ Cantor winning then it is also $\eps$ Cantor winning for any $\eps \in (0, \eps_0)$. 
In particular, the property of being  $1$ Cantor winning is the strongest possible Cantor winning property.

\begin{rem}
As for the generalised Cantor sets, whenever we mention Cantor winning sets
not in the context of a splitting structure we will refer to a standard
splitting structure. In particular, this applies to all theorems in
Section~\ref{newresults} where the standard splitting structure in $\RR^N$ is
considered. Some of those results remain true in a more general context of
complete metric spaces.
\end{rem}

\subsection{Cantor rich}

The class of \textit{Cantor rich} subsets of $\RR$ was introduced in~\cite{Beresnevich_BA}. As was noted in \cite{BadziahinHarrap}, this notion can be generalised to the context of metric spaces endowed with a splitting structure:

\begin{defn}
Let $M\geq 4$ be a real number and $B_0$ be a ball in
$X$.
A set
$E\sub X$
is \emph{$\left(B_{0},M\right)$ Cantor rich} if for every $R$ such that
\begin{equation}\label{eq:RinMrich}
f(R) > M
\end{equation}
and $0 < y < 1$, $E$ contains a $\left(B_{0},R,\br\right)$
Cantor set, where $\br = (r_{m,n})_{0\leq m\leq n}$ is a two-parameter sequence satisfying
\[
\sum_{m=0}^{n}\left(\frac{4}{f(R)}\right)^{n-m+1}r_{m,n}\leq
y\quad\mbox{ for every }n\geq0.
\]
$E$ is \emph{Cantor rich in $B_{0}$} if it is $\left(B_{0},M\right)$ Cantor rich in
$B_{0}$ for some $M\geq 4$. $E$ is \emph{Cantor rich} if it is Cantor
rich in some $B_{0}$.
\end{defn}

Note that, following \cite{Beresnevich_BA}, Cantor rich sets are Cantor rich in some ball, while Cantor winning sets are Cantor winning in every ball.
Also note that unlike in \cite{Beresnevich_BA}, the inequality \eqref{eq:RinMrich} is strict. This enables a clearer correspondence with the parameter $\eps_0$ in Definition \ref{def:cantorWinning} (cf. Corollary \ref{cor:cantor-winning-implies-rich}). 

\subsection{Diffuse sets and doubling spaces}\label{sec:doublingAndDiffuse}
Schmidt's game was originally played on a complete metric space. However, in order to conclude that winning sets have properties such as large Hausdorff dimension, certain assumptions on the underlying space are required. The first time that large intersection with certain subspaces of $\bbr^n$ was proved using Schmidt's game was in \cite{Fishman}. Later it was realised in \cite{BFKRW} that more generally some spaces form a natural playground for the absolute game. The following notion generalises the definition of a diffuse set in $\bbr^n$ as appeared in \cite{BFKRW}.

\begin{defn}\label{def:diffuse}
For $0<\beta<1$, a closed set $K\sub X$ is \emph{$\beta$ diffuse} if
there exists $r_{0}>0$ such that for every $x\in K$, $y\in X$ and
$0<r\leq r_{0}$, we have
\begin{equation}
\big(B(x,(1-\beta)r)\setminus B(y,2\beta r)\big)\cap K\neq\emptyset.\label{eq:diffuse}
\end{equation}
If $K$ is $\beta$ diffuse for some $\beta$ then
we say it is \emph{diffuse}.
\end{defn}

Condition (\ref{eq:diffuse}) is satisfied if and only if there exists $z\in K$ such that
\[
B\left(z,\beta r\right)\sub B\left(x,r\right)\setminus B\left(y,\beta r\right).
\]
The definition of diffuse sets is designed so that the $\beta$ absolute game on a $\beta$ diffuse metric space cannot end after finitely many steps (as long as the first ball of Bob is small enough). So, if Alice plays the game in such way that she is able to force the outcome to lie in the target set, then the reason for her victory is her strategy and not the limitations of the space in which the game is played. 

\begin{rem}\label{rem:diffuse}
In the definition that appears in \cite{BFKRW}, sets as in Definition \ref{def:diffuse} are called $0$-dimensionally diffuse. Moreover, equation (\ref{eq:diffuse})
is replaced by
\begin{equation}
\big(B\left(x,r\right)\setminus B\left(y,\beta r\right)\big)\cap K\neq\emptyset. \label{eq:diffuse2}
\end{equation}
Note that
\[
B\left(x,r\right)\setminus B\left(y,\beta r\right)=B\left(x,\left(1-\beta'\right)\rho\right)\setminus B\left(y,2\beta'\rho\right),
\]
for $\beta'=\frac{\beta}{2+\beta}$ and $\rho=\left(1+\frac{\beta}{2}\right)r$,
so both (\ref{eq:diffuse}) and (\ref{eq:diffuse2}) give rise to
the same class of diffuse sets. We will use (\ref{eq:diffuse}) and
(\ref{eq:diffuse2}) interchangeably without further notice.
\end{rem}

A more standard definition that is equivalent to diffuseness (see \cite[Lemma 4.3]{FSU4}) is the following:
\begin{defn}\label{def:uniformlyPerfect}
For $0<c<1$, a closed set $K\sub X$ is \emph{$c$ uniformly perfect}
if there exists $r_{0}>0$ such that for every $x\in K$ and $0<r\leq
r_{0}$ we have
\begin{equation}
\big(B(x,r)\setminus B(x,cr)\big)\cap K\neq\emptyset.\label{eq:uniformlyPerfect}
\end{equation}
If $K$ is $c$ uniformly perfect for some $c$ then we say it is \emph{uniformly perfect}.
\end{defn}

Being diffuse is equivalent to having no singletons as microsets
\cite[Lemma 4.4]{BFKRW}. Instead of expanding the discussion on
microsets, we refer the interested reader to \cite{Furstenberg4},
and mention that Ahlfors regular sets are diffuse.
In the other direction, diffuse sets support Ahlfors regular measures (see Proposition \ref{propositiondiffusedimR} below).
We will now give the relevant definitions and provide the results which support the statements
above. 

\begin{defn}
A Borel measure $\mu$ on $X$ is called \emph{Ahlfors regular} if there exist $C,\delta,r_{0}>0$ such
that every $x\in X$ and $0<r\leq r_{0}$ satisfy
\begin{equation*}
\frac{1}{C}r^\delta\leq\mu\left(B\left(x,r\right)\right)\leq Cr^\delta.
\end{equation*}
In this case we call $\delta$ the \emph{dimension} of $\mu$ and say that
$\mu$ is \emph{Ahlfors regular of dimension $\delta$}.
\end{defn}

We say that a closed set  $K\sub X$ is \emph{Ahlfors regular} if it is the support of an Ahlfors
regular measure. More exactly, $K\sub X$ is \emph{$\delta$ Ahlfors regular} if it
is the support of an Ahlfors regular measure of dimension~$\delta$. This
terminology gives rise to a dimensional property of $K$:

\begin{defn}
The \emph{Ahlfors regularity dimension} of $K\sub X$ is
\begin{equation*}
\dim_R K=\sup \left\{\delta \sep \text{$K$ supports an Ahlfors regular measure of dimension $\delta$}\right\}. 
\end{equation*}
\end{defn}

Note that it is a direct consequence of the Mass Distribution Principle
(see~\cite{Falconer_book}) that $\dim_R K\leq \dim_H K$, where
$\dim_H K$ stands for the \emph{Hausdorff dimension} of $K$.

\begin{proposition}
\label{propositiondiffusedimR}
A diffuse set $K\sub X$ satisfies $\dim_R K>0$. Conversely, if $\dim_R K > 0$ then $K$ contains a diffuse set.
\end{proposition}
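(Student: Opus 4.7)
For the reverse direction, which is the softer of the two, suppose $\mu$ is an Ahlfors regular measure of dimension $\delta > 0$ whose support $K' := \mathrm{supp}\,\mu$ is contained in $K$, with $c^{-1} r^\delta \leq \mu(B(z,r)) \leq c\, r^\delta$ for $z \in K'$ and $r \leq r_0$. I would show that $K'$ itself is $\beta$ diffuse for any sufficiently small $\beta$, which immediately implies that $K$ contains a diffuse set. Fix $x \in K'$, $0 < r \leq r_0$, and $y \in X$. If $B(y, 2\beta r) \cap K' = \emptyset$ then $\mu(B(y, 2\beta r)) = 0$, and we are done; otherwise we pick $y' \in K' \cap B(y, 2\beta r)$, so that $B(y, 2\beta r) \sub B(y', 4\beta r)$. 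Then
\[
\mu\bigl(B(x, (1-\beta) r)\bigr) - \mu\bigl(B(y, 2\beta r)\bigr) \geq c^{-1} (1-\beta)^\delta r^\delta - c (4\beta)^\delta r^\delta,
\]
which is strictly positive as soon as $c^2 (4\beta)^\delta < (1-\beta)^\delta$, i.e.\ for $\beta < 1/\bigl(1 + 4c^{2/\delta}\bigr)$. This verifies (\ref{eq:diffuse}) at $x$ and scale $r$.

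For the forward direction, the plan is to produce an Ahlfors regular probability measure of positive dimension supported inside $K$. Fix $\beta$ and $r_0$ from the diffuseness hypothesis, pick any $x_0 \in K$, and inductively build a $2$-ary Cantor-like subset $K^* \sub K$ using diffuseness in its equivalent form (for every $z \in K$, every $s \leq r_0$, and every $y \in X$ there is $z^\dagger \in K$ with $B(z^\dagger, \beta s) \sub B(z, s) \setminus B(y, \beta s)$). Inside each level-$n$ ball $B(z, \beta^n r_0)$ centered at a point of $K$, I first pick a child $z' \in K$ using any $y$, and then a second child $z'' \in K$ using $y = z'$, yielding two disjoint level-$(n+1)$ sub-balls centered in $K$. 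Let $K^* := \bigcap_n K_n$, where $K_n$ is the union of the $2^n$ level-$n$ balls, and let $\mu$ be the canonical self-similar probability measure on $K^*$ giving mass $2^{-n}$ to every level-$n$ ball.

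The last step, and the main technical obstacle, is verifying that $\mu$ is Ahlfors regular of dimension $\delta := \log 2 / \log(1/\beta) > 0$, which yields $\dim_R K \geq \delta$. The lower bound $\mu(B(x, r)) \geq c_1 r^\delta$ for $x \in K^*$ is immediate, since the level-$n$ ball of $K^*$ containing $x$ lies inside $B(x, r)$ once $r \geq 2 \beta^n r_0$, contributing mass $2^{-n} = (\beta^n)^\delta$. The delicate part is the upper bound $\mu(B(x, r)) \leq c_2 r^\delta$, where one must bound the number of distinct level-$n$ balls intersecting $B(x, r)$ when $r \approx \beta^n r_0$. I would do this through the tree structure: for any such ball, track the level $m$ of its least common ancestor with the level-$n$ ball of $x$ and use the disjointness of sibling sub-balls at level $m+1$ to force $n - m$ to be bounded by a constant depending only on $\beta$. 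To make this LCA argument go through cleanly, it may be convenient to construct the tree with children of radius $\beta^2$ (rather than $\beta$) times the parent radius, which only rescales $\delta$ to $\log 2/\log(1/\beta^2)$, still strictly positive, but ensures that sibling balls are separated from one another by much more than their own radius, so that only $O(1)$ level-$n$ balls can simultaneously meet $B(x,r)$.
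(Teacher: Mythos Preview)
Your proposal is correct and follows essentially the same route as the paper: for the converse you prove directly that the support of a positive-dimensional Ahlfors regular measure is diffuse (which is exactly the content of the reference \cite[Lemma 5.1]{BFKRW} the paper cites), and for the forward direction you build the same binary Cantor set via diffuseness that the paper builds. The paper dismisses the Ahlfors regularity of this Cantor set as ``standard calculations''; you rightly flag the upper bound $\mu(B(x,r))\leq c_2 r^\delta$ as the nontrivial step and propose the $\beta^2$-shrinking trick to force genuine separation between sibling balls, which makes the LCA counting go through in an arbitrary complete metric space---this is a worthwhile clarification the paper leaves implicit.
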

\begin{proof}
Suppose $K$ is $\beta$ diffuse; then every ball in $K$ of radius $\rho$ contains two disjoint balls of radius $\beta \rho$. Construct a Cantor set by recursively replacing every ball of radius $\rho$ with two disjoint subballs of radius $\beta\rho$. Standard calculations show that this Cantor set is Ahlfors regular of dimension $\frac{\log 2}{-\log \beta} > 0$, so $\dim_R K > 0$.

Suppose $\dim_R K > 0$; then $K$ contains an Ahlfors regular subset of positive dimension, which is
diffuse (cf. e.g. \cite[Lemma 5.1]{BFKRW}).
\end{proof}

The following property of Ahlfors regular sets will be used later in the paper.

\begin{proposition}\label{prop_ahlfors}
If $(X,\mu)$ is $\delta$ Ahlfors regular then there exists $c>0$ such that
for any ball $B\sub X$ and any $\alpha<1$ there exist at least
$c\alpha^{-\delta}$ balls $B_i=B(x_i,\alpha\cdot \rad(B))\sub B$ $(i\in \{1,\ldots, I\},
I\ge c\alpha^{-\delta})$
such that $\vd (x_i,x_j)\ge 3\alpha\cdot\rad(B)$ for any $1\le i\neq j\le I$.
\end{proposition}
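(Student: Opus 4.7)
The plan is to use a standard greedy packing argument together with the two-sided Ahlfors estimate. Let $\mu$ be the Ahlfors regular measure of dimension $\delta$ supported on $X$, with constants $C, r_0 > 0$, and write $B = B(x_0, r)$.

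First I would dispose of the regime $\alpha \ge 1/2$: since the desired lower bound $c\alpha^{-\delta}$ is then at most $c\cdot 2^\delta$, one simply arranges $c \le 2^{-\delta}$ and takes the single ball $B(x_0,\alpha r) \subseteq B$, for which the separation condition is vacuous. Assume from now on that $\alpha < 1/2$, so in particular $(1-\alpha) \ge 1/2$. Let $\{x_1,\ldots,x_I\}$ be a \emph{maximal} $3\alpha r$-separated subset of $B(x_0,(1-\alpha)r)$ (i.e.\ a set with pairwise distances at least $3\alpha r$ which cannot be enlarged preserving this property). Then the pairwise separation $\vd(x_i,x_j)\ge 3\alpha r$ is built in, each $B_i := B(x_i,\alpha r)$ is contained in $B$ because $\vd(x_0,x_i) \le (1-\alpha)r$, and by maximality the enlarged balls $B(x_i,3\alpha r)$ cover $B(x_0,(1-\alpha)r)$.

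Applying the Ahlfors lower bound to $B(x_0,(1-\alpha)r)$ and the Ahlfors upper bound to each covering ball (valid provided $r \le r_0$, the case $r > r_0$ reducing to this after replacing $B$ with a concentric sub-ball of radius $r_0$ and paying a worse constant) gives
\[
\frac{1}{C}\bigl((1-\alpha)r\bigr)^\delta \;\le\; \mu\bigl(B(x_0,(1-\alpha)r)\bigr) \;\le\; \sum_{i=1}^I \mu\bigl(B(x_i,3\alpha r)\bigr) \;\le\; I\cdot C(3\alpha r)^\delta,
\]
which rearranges, using $(1-\alpha) \ge 1/2$, to $I \ge c\alpha^{-\delta}$ with $c = C^{-2} \cdot 6^{-\delta}$.

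There is no genuine obstacle here; this is the standard ``two-sided Ahlfors sandwich''. The only points that need mild care are splitting off the case $\alpha \ge 1/2$ so that the factor $(1-\alpha)^\delta$ does not ruin uniformity of the constant $c$ in $\alpha$, and placing the packing centres inside the shrunken ball $B(x_0,(1-\alpha)r)$ (rather than $B$) so that the resulting balls $B_i$ of radius $\alpha r$ genuinely lie inside $B$.
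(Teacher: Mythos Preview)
Your proof is correct and follows essentially the same approach as the paper: split off the case $\alpha$ near $1$, pack $3\alpha r$-separated centres inside the shrunken ball $B(x_0,(1-\alpha)r)$ so that the resulting $\alpha r$-balls sit in $B$, and then sandwich the measure of the shrunken ball between the Ahlfors lower bound and the sum of Ahlfors upper bounds on the $3\alpha r$-balls. The only cosmetic difference is that the paper phrases the packing step as an iterative ``if fewer than $C_1^{-1}\alpha^{-\delta}$ centres have been chosen, the union of $3\alpha r$-balls cannot yet cover $B'$, so add another'' argument, whereas you invoke a maximal separated set directly; these are equivalent. You also flag the $r_0$ restriction in the Ahlfors bounds, which the paper's proof silently ignores.
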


\begin{proof}
It is enough to prove the proposition for all $\alpha\leq1/2$.
Let $B'$ be a ball with the same centre as $B$, but with radius
$(1-\alpha)\rad(B)$, and let $D\sub X$ be an arbitrary ball of
radius $\rad(D) = 3\alpha\cdot \rad(B)$. Then:
$$
\mu (D)\le C\cdot (3\alpha)^\delta (\rad(B))^\delta \le C^2\cdot
\left(\frac{3\alpha}{1-\alpha}\right)^\delta \mu (B') \leq C_1
\alpha^\delta \mu (B').
$$
Therefore, for any $k$ balls $D_i$ of radius $3\alpha \cdot \rad(B)$ with
$k<C_1^{-1} \alpha^{-\delta}$, there exists another ball~$D$ whose
centre lies inside $B'\backslash \bigcup_{i=1}^k D_i$. Hence there are
at least $I = \lceil C_1^{-1} \alpha^{-\delta}\rceil$ balls $D_i$
such that their centres lie inside $B'$ and are distanced by at
least $3 \alpha\cdot\rad(B)$ from each other. For each $1 \leq i \leq I$, let $B_i$ be a ball with the same centre as of $D_i$ but with the radius
$\alpha \cdot \rad(B)$. Clearly the balls $B_i$ satisfy all the conditions
of the proposition, where $c>0$ is chosen to ensure the inequality
$$
c\alpha^{-\delta} \le \lceil C_1^{-1} \alpha^{-\delta}\rceil
$$
for all $\alpha\in[0,1]$.
\end{proof}

Although winning sets are usually dense, the large dimension
property~(W1) for some types of winning sets depends on the space
$X$ being large enough. For example, absolute winning sets may be
empty if the Ahlfors regularity dimension is zero (cf. Remark
\ref{rem:emptysetIsWinning}). Various conditions are introduced in
this context; for example, see \cite[Section 11]{Schmidt1} and
\cite[Theorem 3.1]{Fishman}. More recently, the following condition
was introduced in \cite{BadziahinHarrap}:
\begin{itemize}
    \item[(S4)] There exists an absolute constant $C(X)$ such that any
    ball $B \in \BBB(X)$ cannot intersect more than $C(X)$ disjoint open balls of
    the same radius as $B$.
\end{itemize}

\begin{proposition}[{\cite[Corollary to Theorem
4.1]{BadziahinHarrap}}] If $X$ satisfies \textup{(S4)} then for any
$B \in \BBB(X)$ and any Cantor winning set $E\sub X$ we have
$$
\dim_H(E \cap A_\infty(B))=\dim_H A_\infty(B).
$$
\end{proposition}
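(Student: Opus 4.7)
The plan is to exhibit, inside $E\cap A_\infty(B)$, a generalised Cantor set whose Hausdorff dimension matches $\dim_H A_\infty(B)$, and then read off the conclusion from the Mass Distribution Principle. Since $E$ is $\eps_0$ Cantor winning for some $\eps_0\in(0,1]$, for every $\eps\in(0,\eps_0)$ and every sufficiently large $R\in U$ there exists a $(B,R,\vr)$ Cantor set $\KKK\sub E$ with $r_{m,n}\le f(R)^{(n-m+1)(1-\eps)}$. The construction forces every ball of $\BBB_n$ to lie in $\SSS(B,R^n)$, so $\KKK\sub A_\infty(B)$ and hence $\KKK\sub E\cap A_\infty(B)$; it therefore suffices to lower bound $\dim_H\KKK$.

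Let $N_n := \#\BBB_n$. The recursion yields $N_{n+1}\ge f(R)N_n - \sum_{m=0}^n N_m\,r_{m,n}$, and using the crude bound $N_m\le f(R)^m$ together with the Cantor winning estimate, I would show
\[
\sum_{m=0}^{n} N_m\, r_{m,n} \;\le\; \sum_{m=0}^n f(R)^m f(R)^{(n-m+1)(1-\eps)} \;\le\; \frac{f(R)^{n+1}}{f(R)^\eps-1},
\]
which is a vanishing fraction of $f(R)^{n+1}$ once $R$ is large. A straightforward induction then delivers $N_n\ge c_\eps f(R)^n$ with a positive constant $c_\eps$ independent of $n$. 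Placing mass $N_n^{-1}$ on each ball of $\BBB_n$ and passing to a limit gives a measure $\mu$ supported on $\KKK$. For an arbitrary ball $D$ of radius $\rho$ with $\rad(B)R^{-(n+1)}<\rho\le\rad(B)R^{-n}$, property (S4) guarantees that $D$ meets at most $C(X)$ balls of $\BBB_n$, so $\mu(D)\ll_\eps f(R)^{-n}\ll \rho^{s}$ for every $s<\log f(R)/\log R$. The Mass Distribution Principle therefore gives $\dim_H\KKK\ge \log f(R)/\log R$.

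Finally, total multiplicativity of $f$ implies that $\log f(R)/\log R$ is independent of $R\in U$, and applying the same mass distribution argument to the trivial choice $r_{m,n}\equiv 0$ identifies this quantity with $\dim_H A_\infty(B)$. Combined with the obvious upper bound $\dim_H(E\cap A_\infty(B))\le \dim_H A_\infty(B)$ this yields the claimed equality. The hard part is the inductive bookkeeping of step two: the ``cross terms'' $\AAA_{m,n}$ with $m<n$ correspond to balls removed at level $n$ that live inside splittings of balls entering the construction at much earlier stages, so the argument has to tolerate their accumulation while still maintaining growth at essentially the maximal rate $f(R)^n$. A secondary subtlety is the use of (S4) as a substitute for true disjointness of elements of $\BBB_n$ when estimating $\mu(D)$; without it a single small ball could intersect uncontrollably many level-$n$ balls and the MDP bound would collapse.
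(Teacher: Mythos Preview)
The paper does not supply a proof of this proposition; it is quoted verbatim from \cite[Corollary to Theorem~4.1]{BadziahinHarrap}. So there is no in-paper argument to compare against, and I evaluate your sketch on its own merits.

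The overall strategy---build a $(B,R,\vr)$ Cantor set inside $E\cap A_\infty(B)$ and run the Mass Distribution Principle, using (S4) to bound how many level-$n$ balls a test ball can meet---is exactly the right one, and is essentially what \cite{BadziahinHarrap} do. However, the execution has two genuine gaps.

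\emph{First, the global induction does not close.} From your bound the recursion reads, with $\beta_n:=N_n/f(R)^n$,
\[
\beta_{n+1}\ \ge\ \beta_n-\frac{1}{f(R)^\eps-1},
\]
so $\beta_n\ge 1-n/(f(R)^\eps-1)$, which becomes negative. The point is that using $N_m\le f(R)^m$ overcounts the damage: balls already removed cannot be removed again, and your estimate ignores this cancellation. No choice of a fixed $c_\eps>0$ survives the iteration.

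\emph{Second, even a correct global bound $N_n\ge c\,f(R)^n$ would not feed the MDP step as written.} Assigning mass $N_n^{-1}$ to each ball of $\BBB_n$ does not define a measure consistent across levels unless every ball has the same number of children; a weak-$*$ limit of such approximations can concentrate on a subtree, so the conclusion $\mu(D)\ll f(R)^{-n}$ for an arbitrary $D$ meeting $C(X)$ level-$n$ balls is unjustified. What the MDP needs is a \emph{local} lower bound: for every $B\in\BBB_n$, the number of level-$(n+1)$ children of $B$ that survive is at least a fixed fraction of $f(R)$.

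The fix is precisely the ``hard part'' you flag but do not carry out: track, for each surviving ball, a potential that bounds the cumulative future removals it can still suffer, and show this potential decays geometrically. That is how \cite{BadziahinHarrap} handle the cross terms $\AAA_{m,n}$ with $m<n$, and it is the same mechanism as in this paper's proof of Theorem~\ref{thm:potentialCantor}. Once you have the local child bound, the natural mass distribution (split the mass of each ball equally among its surviving children) gives $\mu(B)\le (c\,f(R))^{-n}$ for every $B\in\BBB_n$, and then (S4) together with MDP finishes exactly as you indicate.
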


Condition (S4) is similar to the following definition:

\begin{defn}\label{def:doubling}
A metric space $X$ is \emph{doubling} if there exists a constant $N\in\NN$ such that for every $r>0$, every ball of radius $2 r$ can be covered by a collection of at most $N$ balls of radius $r$.
\end{defn}

Doubling spaces satisfy condition (S4), but the converse does not hold. For example, in
an ultrametric space any two non-disjoint balls of the same radius are set-theoretically equal.
So an ultrametric space satisfies (S4) with $C=1$, but is not necessarily doubling.
As shown in~\cite{BadziahinHarrap}, the space $X$ needs to
satisfy~(S4) for Cantor winning sets in $X$ to satisfy the large
dimension~(W1) and large intersection~(W2) conditions, and if
$X$ is doubling then Cantor winning sets satisfy (W3) with
respect to bi-Lipschitz homeomorphisms of $X$. This makes doubling spaces a natural setting for generalised Cantor sets,
and it is under this condition that many of the results of the next section hold.



\section{Connections}

\subsection{The Cantor game}\label{sec:cantor_game}

To facilitate a more natural exposition,  we introduce a new game. The ``Cantor game'' described below is similar to the
games described earlier. The main difference is that this game is designed to be
played on a metric space  endowed with a splitting
structure. Our nomenclature is so chosen because (as we shall demonstrate) under natural
conditions the winning sets for this game are precisely the Cantor
winning sets defined in Definition \ref{def:cantorWinning}. 

Let $X$ be a complete metric space and let $(X,\SSS, U,f)$ be a
splitting structure. Given $\eps\in(0,1]$, the \emph{$\eps$ Cantor
game} is defined as follows: Bob starts by choosing an integer
$2\leq R\in U$ and a closed ball $B_0\sub X$. For any $i\geq0$,
given Bob's $i$th choice $B_i$, Alice chooses a collection
$\cala_{i+1}\sub\frac{1}{R}\left\{B_i\right\}$ satisfying
\[
\#\cala_{i+1}\leq f(R)^{1-\eps}.
\]
Given $\cala_{i+1}$, Bob chooses a ball $B_{i+1}\in
\frac{1}{R}\left\{B_i\right\}\setminus\cala_{i+1}$, and the game continues. We say that
$E\sub X$ is \emph{winning for the $\eps$ Cantor game} if
Alice has a strategy which guarantees that (\ref{eq:outcome}) is
satisfied. The \emph{Cantor game} is defined by allowing Alice first
to choose $\eps\in(0,1]$ and then continuing as in the $\eps$ Cantor
game. We say that $E$ is \emph{winning for the Cantor game} if Alice has a winning strategy for the Cantor game. Note that $E$ is winning for the Cantor game if and only if it is winning for the $\eps$ Cantor game for some
$\eps\in(0,1]$. We say that $E$ is \emph{winning for the
$\eps$ Cantor game on $B_0$} if Alice has a winning strategy in the
$\eps$ Cantor game given that Bob's first ball is $B_0$.

\begin{rem}
A version of the Cantor game may be played in metric spaces with no prescribed splitting structure. Given $c\geq0$ and $0<\beta<1$, Bob will choose a sequence of decreasing balls with radii that shrink at rate $\beta$, while Alice will remove a collection of at most $\beta^{-c}$ closed balls of radius $\beta r$ where $r$ is the radius of Bob's previous choice, instead of just one such ball as in the absolute game. When $c=0$ this game coincides with the absolute game.
\end{rem}

It is clear that winning sets for the Cantor game contain Cantor
winning sets. Indeed, by the construction, for any $R\in U$ a
winning set for the $\eps$ Cantor game on $B_0$ contains a $(B_0, R,
f(R)^{1-\eps})$ Cantor set which is comprised of all Bob's possible moves while playing against a fixed winning strategy of Alice.
Therefore, it is $\eps$ Cantor winning on $B_0$.
We show that the converse is also true: 

\begin{theorem}\label{thm:potentialCantor}
Let $E\sub X$. Let $B_{0}\sub X$ be
a closed ball and $0<\eps_0\leq1$. If $E$ is $\eps_0$ Cantor winning
on $B_0$ then $E$ is winning for the $\eps_0$ Cantor game on $B_0$.
\end{theorem}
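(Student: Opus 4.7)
The plan is to construct Alice's winning strategy in the $\eps_0$ Cantor game directly from a generalised Cantor set inside $E$ supplied by the hypothesis, using an amortisation-of-removals argument in the spirit of the potential game, as foreshadowed in the passage introducing the theorem.

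Concretely, once Bob announces $R$ and the initial ball $B_0$, Alice picks $\eps \in (0,\eps_0)$ close to $\eps_0$ and a power $R' = R^k \in U$ with $R' \geq R_\eps$, and invokes the $\eps_0$ Cantor winning hypothesis to fix a $(B_0, R', \vr)$ Cantor set $\KKK \subseteq E$ with $r_{m,n} \leq f(R')^{(n-m+1)(1-\eps)}$. Writing $\BBB_n^\KKK$ and $\AAA_{m,n}^\KKK$ for the surviving and removed collections at step $n+1$ of the construction, she views the game in blocks of $k$ consecutive turns: Cantor level $j$ sits at game level $jk$, and her invariant is $B_{jk} \in \BBB_j^\KKK$. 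At each game turn $jk+\ell+1$ within a block, she prunes those children of $B_{jk+\ell}$ whose subtrees carry a disproportionately large share of the forbidden level-$(j+1)k$ descendants in $\bigcup_{m=0}^{j} \AAA_{m,j}^\KKK(B_{jk})$, so that by the end of the block no forbidden ball is reachable by Bob. A Markov-type argument shows that such a pruning fits within the per-turn budget $f(R)^{1-\eps_0}$ while multiplicatively reducing the maximum per-child forbidden count at each turn.

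The main obstacle is verifying that the $k$ intermediate prunings do indeed eliminate all forbidden balls before Bob reaches game level $(j+1)k$, while staying within budget. This is where the positive gap $\eps_0 - \eps > 0$, together with the freedom to choose $k$ large, plays its part: the gap furnishes the amortisation slack needed to absorb the $\sum_{m=0}^{j} r_{m,j}^\KKK$ accumulation of forbidden counts at each block, directly paralleling how the parameter $c$ in the $c$-potential game amortises Alice's bookkeeping across many rounds (cf.~\S\ref{sec:potential}). Once the per-turn budget is verified, the invariant $B_{jk} \in \BBB_j^\KKK$ propagates by induction on $j$, giving $\bigcap_i B_i \subseteq \KKK \subseteq E$ and thereby Alice's win.
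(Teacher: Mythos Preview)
Your high-level plan matches the paper's: choose $\eps<\eps_0$ close to $\eps_0$, pass to $R'=R^k$ with $k$ large, fix a $(B_0,R',\vr)$ Cantor set $\KKK\subseteq E$, play the game in blocks of $k$ turns, and at each turn prune the children carrying the largest share of forbidden descendants. However, the invariant you propose, namely $B_{jk}\in\BBB_j^\KKK$, is too weak to close the induction, and the claimed Markov bound fails for non-local Cantor sets.

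Concretely, once $B_{jk}\in\BBB_j^\KKK$, the balls you must avoid at level $(j+1)k$ are those in $\AAA_{m,j}^\KKK$ for all $0\le m\le j$ that happen to lie inside $B_{jk}$. The hypothesis only bounds $\#\AAA_{m,j}(B^{(m)})\le r_{m,j}=f(R')^{(j-m+1)(1-\eps)}$ for the level-$m$ ancestor $B^{(m)}$ of $B_{jk}$; nothing prevents all of these from concentrating inside $B_{jk}$. Thus your forbidden count $N_j$ can be as large as $\sum_{m=0}^j r_{m,j}\asymp f(R')^{(j+1)(1-\eps)}$, which grows without bound in $j$. The Markov reduction over the $k$ turns of block $j$ shrinks the count by at most $(\lfloor f(R)^{1-\eps_0}\rfloor+1)^k\asymp f(R')^{1-\eps_0}$, a fixed factor independent of $j$. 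So for large $j$ you cannot force $N_j/(t+1)^k<1$, and Bob may be able to land in a removed ball. The gap $\eps_0-\eps$ and the freedom in $k$ do not help here, because they affect only the per-block reduction factor, not its $j$-dependence.

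The paper closes this gap by replacing the bare membership invariant with a quantitative one: a potential $\varphi_i(B)$ that sums, over \emph{all} $m\le \lfloor i/\ell\rfloor$ and \emph{all} $n\ge m$, the number of balls in $\AAA_{m,n}$ meeting $B$, weighted by $f(R')^{-(n+1)(1-\eps_2)}$ for a carefully chosen $\eps_2<\eps_1<\eps_0$. The Markov step then shows $\varphi$ contracts by a fixed factor at each turn, while the new terms appearing when $m$ increments to $j+1$ form a geometric series controlled by the $\eps_1-\eps_2$ gap; together these give $\varphi_{j\ell}(B_{j\ell})<f(R')^{-j(1-\eps_2)}$ for all $j$, which in particular forces $B_{j\ell}\in\BBB_j^\KKK$. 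In short, you need to track future removals at all depths simultaneously, with depth-dependent weights, rather than only the removals relevant to the current block.
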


\begin{proof}
We will define a winning strategy for Alice. Fix $R\geq2$, and let $\delta\geq 0$ be chosen so that $f(R) = R^\delta$. Choose $0<\eta<\eps_0$ so small that

\begin{equation*}
    \frac{R^{\delta\left(1-\eps_0 + \eta\right)}}{\lfloor R^{\delta(1-\eps_0)}\rfloor+1} < 1.
\end{equation*}
 Set
\begin{align}
    \eps_1 &=\eps_0 - \frac{\eta}{2},\label{eq:potentialEps1}\\
    \eps_2 &=\eps_0 - \eta,\label{eq:potentialEps2}
\end{align}
and let $R_{\eps_1}$ be as in Definition \ref{def:cantorWinning}.
Then for any $\ell$ such that $R^\ell\geq R_{\eps_1}$, since $f\left(R^\ell\right)=f\left(R\right)^\ell=R^{\delta\ell}$, there exists a
$\left(B_{0},R^\ell,\left(R^{\left(n-m+1\right)\ell\delta(1-\eps_1)}\right)_{0\leq
m\leq n}\right)$ Cantor set in $E$. 
Let $\ell$ be large enough so
that $R^{\ell}\geq R_{\eps_1}$,

\begin{equation}\label{eq:potentialEll1}
\left(\frac{R^{\delta\left(1-\eps_2\right)}}{\lfloor R^{\delta(1-\eps_0)}\rfloor+1}\right)^\ell<\frac{1}{2},
\end{equation}
and
\begin{equation}\label{eq:potentialEll2}
R^{-\ell\delta\frac{\eta}{2}}<\frac{1}{3} \cdot
\end{equation}
Such a value exists because of the choice of $\eta$.

Choose a $\left(B_{0},R^{\ell},\left(R^{\left(n-m+1\right)\ell\delta(1-\eps_1)}\right)_{0\leq m\leq n}\right)$ Cantor
set in $E$. Let $\cala=\left\{ \cala_{m,n}\right\} _{0\leq m\leq n}$
be the removed balls, so that $\calb_{n+1}=\frac{1}{R^{\ell}}\calb_{n}\setminus\bigcup_{m=0}^{n}\cala_{m,n}$ for all $n\geq0$, where
$\calb_{0}=\left\{ B_{0}\right\} $.

Define a winning strategy for Alice in the $\eps_0$ Cantor game as follows: For each $i\geq 0$ define a potential function on balls:
\begin{equation}\label{eq:potential}
\varphi_{i}\left(B\right)=\sum_{m=0}^{\left\lfloor i/\ell\right\rfloor }\sum_{n=m}^{\infty}\#\left\{ A\in\cala_{m,n}\sep A\cap B \text{ has nonempty interior}\right\} R^{-\left(n+1\right)\ell\delta(1 - \eps_2)}.
\end{equation}


Given Bob's $i$th move $B_i$, Alice's strategy is to choose the subcollection $\AAA_{i+1} \sub \frac{1}{R}\{B_i\}$ consisting of the $\lfloor R^{\delta(1-\eps_0)} \rfloor$ balls with the largest corresponding values of $\varphi_{i+1}$. We will show that this is a winning strategy for Alice. Assume Bob chose the sequence of balls $\left\{B_i\right\}_{i=0}^\infty$ on his turns, while Alice responded according to the above strategy. It is enough to prove that for every $i\geq 0$
\begin{equation}\label{eq:inductionPotential}
\varphi_{i\ell}\left(B_{i\ell}\right) < R^{-i\ell\delta(1- \eps_2)}.
\end{equation}
Indeed, this inequality would imply that $B_{i\ell}$ does not
intersect any of the balls from $\AAA_{m,n}$ with $n<i$ except at the boundary, which implies that
$B_{i\ell}\in \BBB_i$.

Fix $i\geq 0$ such that \eqref{eq:inductionPotential} holds. For
every $0\leq j\leq\ell-1$, note that $\left\lfloor
\frac{i\ell+j}{\ell}\right\rfloor = i$, so by the induction
hypothesis (\ref{eq:inductionPotential}), we get
\[
\varphi_{i\ell+j}\left(B_{i\ell+j}\right)\le
\varphi_{i\ell}\left(B_{i\ell}\right)<R^{-i\ell\delta(1- \eps_2)}.
\]
Therefore, for any $n\geq m \geq 0$, if $A\in\cala_{m,n}$ is such that $A\cap B_{i\ell+j+1}$ has nonempty interior, then necessarily
$n+1 > i$, and thus by (S2), we have $A \sub B_{i\ell+j+1}$. So
\begin{equation}
\label{fromS2}
\sum_{A\in\frac{1}{R}\left\{B_{i\ell+j}\right\}}\varphi_{i\ell+j}(A)=\varphi_{i\ell+j}(B_{i\ell+ j}),
\end{equation}
and the definition of Alice's strategy yields
\begin{equation}\label{th32_eq1}
\varphi_{i\ell+j}\left(B_{i\ell+j+1}\right)\leq\frac{1}{\lfloor R^{\delta(1-\eps_0)}\rfloor+1}\varphi_{i\ell+j}\left(B_{i\ell+j}\right)
\end{equation}
for every $0\leq j\leq l-1$. Indeed, there are at least
$\lfloor R^{\delta(1-\eps_0)}\rfloor$ balls $A$, chosen by Alice in her turn, with
$\varphi_{i\ell+j}(A)\ge \varphi_{i\ell+j}(B_{i\ell+j+1})$.
Therefore, if \eqref{th32_eq1} fails for for some ball
$B_{i\ell+j+1}$, we have
$$
\varphi_{i\ell+j}\left(B_{i\ell+j+1}\right) + \sum_{A\mbox{ chosen by Alice}} \varphi_{i\ell+j}(A) > \varphi_{i\ell+j}\left(B_{i\ell+j}\right)
$$
which contradicts \eqref{fromS2}.

In order to estimate $\varphi_{(i+1)\ell}\left(B_{(i+1)\ell}\right)$ we need
to additionally consider the potential coming from the sets
$(\cala_{i+1,n})_{n\ge i+1}$. Clearly, for any $A\in \cala_{i+1,n}$ the
condition that $A\cap B_{(i+1)\ell}$ has nonempty interior is equivalent to the inclusion $A\sub
B_{(i+1)\ell}$. Hence we have:
\begin{align*}
\varphi_{(i+1)\ell}\left(B_{(i+1)\ell}\right) \leq &
\left(\frac{1}{\lfloor R^{\delta(1-\eps_0)}\rfloor+1}\right)^{\ell}\varphi_{i\ell}\left(B_{i\ell}\right) \\
& + \sum_{n=i+1}^{\infty}\#\left\{ A\in\cala_{i+1,n}\sep A\sub B_{(i+1)\ell}\right\} R^{-\left(n+1\right)\ell\delta(1-\eps_2)}.
\end{align*}

Since $\#\left\{ A\in\cala_{m,n}\sep A\sub B_{m\ell}\right\} \leq R^{\left(n-m+1\right)\ell\delta(1-\eps_1)}$ for any $n\geq m\geq0$, by \eqref{eq:potentialEps1}, \eqref{eq:potentialEps2}, and \eqref{eq:inductionPotential}, we get:
\begin{align*}
\varphi_{(i+1)\ell}\left(B_{(i+1)\ell}\right)
& \leq \left(\frac{1}{\lfloor R^{\delta(1-\eps_0)}\rfloor+1}\right)^{\ell}R^{-i\ell\delta(1-\eps_2)}+\sum_{n=i+1}^{\infty}R^{\left(n-i\right)\ell\delta(1-\eps_1)}R^{-\left(n+1\right)\ell\delta(1-\eps_2)}\\
& =    \left(\frac{R^{\delta\left(1-\eps_2\right)}}{\lfloor R^{\delta(1-\eps_0)}\rfloor+1}\right)^{\ell}R^{-(i+1)\ell\delta\left(1-\eps_2\right)}+\sum_{n=1}^{\infty}R^{-n\ell\delta\frac{\eta}{2}}R^{-(i+1)\ell\delta(1-\eps_2)}\\
& =    \left(\left(\frac{R^{\delta\left(1-\eps_2\right)}}{\lfloor R^{\delta(1-\eps_0)}\rfloor+1}\right)^{\ell}+\frac{R^{-\ell\delta\frac{\eta}{2}}}{1-R^{-\ell\delta\frac{\eta}{2}}}\right)R^{-(i+1)\ell\delta(1-\eps_2)}.
\end{align*}
By (\ref{eq:potentialEll1}) and (\ref{eq:potentialEll2}) we get:
\[
\varphi_{(i+1)\ell}\left(B_{(i+1)\ell}\right)<R^{-(i+1)\ell\delta(1-\eps_2)},
\]
which is what we need to complete the induction.
\end{proof}

Theorem~\ref{thm:potentialCantor} can be rephrased in terms of local Cantor sets (cf. Example \ref{examplebasic}):

\begin{corol}\label{corl:cantor}
Let $E\sub X$. Let $B_0\sub X$ be a closed ball and $0<\eps\le 1$. If $E$ is $\eps$ Cantor winning on $B_0$ then for any $R\ge 2$ it contains a $\left(B_0,R,f(R)^{1-\eps}\right)$ Cantor set.
\end{corol}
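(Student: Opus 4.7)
The plan is to extract the desired local Cantor set directly from a winning strategy for Alice in the $\eps$ Cantor game on $B_0$, whose existence is guaranteed by Theorem \ref{thm:potentialCantor}. First I apply that theorem to obtain such a strategy. Fix $R\geq 2$ in $U$, imagine Bob opens with the pair $(R,B_0)$, and let $\sigma$ denote a winning strategy for Alice against this opening.

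The crucial structural observation is that the splitting is tree-like: by (S2) distinct elements of $\SSS(B,u)$ meet only at their boundaries, and by (S3) iterated splittings cohere. Together these imply that every ball $B\in \frac{1}{R^n}\{B_0\}$ lies inside a \emph{unique} parent in $\frac{1}{R^{n-1}}\{B_0\}$, and so by induction $B$ determines a unique nested ancestor chain $(B_0,B_1,\ldots,B_n=B)$. Consequently Alice's prescribed response to the history $(B_0,\ldots,B_n)$ may be regarded as a function $\sigma(B)\sub \frac{1}{R}\{B\}$ of the current ball $B$ alone, and the rules of the $\eps$ Cantor game guarantee
\[
\#\sigma(B)\leq f(R)^{1-\eps}.
\]

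Next I build the Cantor set inductively: put $\mathcal{B}_0:=\{B_0\}$ and
\[
\mathcal{B}_{n+1}:=\bigcup_{B\in\mathcal{B}_n}\left(\tfrac{1}{R}\{B\}\setminus\sigma(B)\right),\qquad \mathcal{A}_{n,n}:=\bigcup_{B\in\mathcal{B}_n}\sigma(B),\qquad \mathcal{A}_{m,n}:=\emptyset\ \text{for }m<n.
\]
The bound on $\#\sigma(B)$ then reads $\#\mathcal{A}_{n,n}(B)\leq f(R)^{1-\eps}$ for each $B\in\mathcal{B}_n$, which is exactly the local Cantor set condition with constant sequence $r_n=f(R)^{1-\eps}$ (i.e. the $R^{1-\eps}$ appearing in the statement, under the natural reading of that notation for the given splitting structure).

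Finally, any point $x$ in the limit set sits at the intersection of some nested sequence $(B_n)$ with $B_n\in\mathcal{B}_n$ and $B_{n+1}\in \frac{1}{R}\{B_n\}\setminus\sigma(B_n)$. Reading this sequence as Bob's moves against $\sigma$ shows that it is a legal play of the $\eps$ Cantor game, and since $\sigma$ is winning we get $\bigcap_n B_n\sub E$; in particular $x\in E$, so the Cantor set is contained in $E$. The main (and essentially only) obstacle is the structural step that lets $\sigma$ be viewed as a function of the current ball alone; once that tree-like dependence is established, the construction and the inclusion in $E$ follow by bookkeeping.
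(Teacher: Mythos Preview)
Your proof is correct and follows exactly the approach the paper has in mind: it states the corollary immediately after Theorem~\ref{thm:potentialCantor} and just before that theorem already remarks that a winning set for the $\eps$ Cantor game on $B_0$ contains a $(B_0,R,f(R)^{1-\eps})$ Cantor set ``comprised of all Bob's possible moves while playing against a fixed winning strategy of Alice,'' which is precisely your construction. Your explicit use of (S2)--(S3) to make $\sigma$ positional is a detail the paper leaves implicit, but it is the right way to justify that $\#\mathcal{A}_{n,n}(B)\le f(R)^{1-\eps}$ really holds for each $B\in\mathcal{B}_n$.
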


Corollary \ref{corl:cantor} means that in Definition~\ref{def:cantorWinning} it is enough to use local Cantor sets. This will be useful in the proof of Theorem \ref{thm:cantor_rich}.

For the sake of completeness, let us define a version of the Cantor game
called the ``Cantor potential game''. Its name is justified via the
analogy with the potential game, and the key role that potential
functions, like the one in (\ref{eq:potential}), play in the proof above that relates it to the Cantor game.

Given $\eps_0\in(0,1]$, the \emph{$\eps_0$ Cantor potential game} is defined
as follows: Bob starts by choosing $0<\eps<\eps_0$, and Alice replies by choosing $R_\eps\geq2$. Then Bob chooses an integer $R\geq R_\eps$, and
a closed ball $B_0\sub X$. For any $i\geq0$, given Bob's $i$th choice $B_i$,
Alice chooses collections $\left\{\cala_{i+1,k}\right\}_{k=0}^\infty$ such
that $\cala_{i+1,k}\sub\frac{1}{R^k}\left\{B_i\right\}$ and
\[
\#\cala_{i+1,k}\leq f(R)^{(k+1)(1-\eps)}
\]
for all $k\geq 0$. Given $\left\{\cala_{i+1,k}\right\}_{k=0}^\infty$, Bob chooses a ball $B_{i+1}\in \frac{1}{R}\left\{B_i\right\}\setminus\bigcup_{k=0}^{i}\cala_{i+1-k,k}$ (if there's no such ball, we say that Alice wins by default). We say that $E\sub X$ is $\eps$ Cantor potential winning if Alice has a strategy which guarantees that either she wins by default or (\ref{eq:outcome}) is satisfied.
The \emph{Cantor potential game} is defined by allowing Alice first to choose $\eps_0\in(0,1]$ and then continuing as in the $\eps_0$ Cantor potential game.

By definition, $\eps_0$ Cantor winning is the same as $\eps_0$ Cantor potential winning. Moreover, there's a natural way to pass between the collections that define a Cantor winning set to the winning strategy in the Cantor potential game. Indeed, if $E\sub X$ is $\eps_0$ Cantor winning in $B_0$ given by the collections $\calb_n$ and $\cala_{m,n}$ for any $n\geq 0$ and any $0\leq m\leq n$, then a winning strategy for Alice for the $\eps_0$ Cantor potential game may be given by $\cala'_{i+1,k} = \cala_{i+1,k+i+1}(B_i)$ where $B_i$ is Bob's $i$th move. In the other direction, let $\cala'$ denote a winning strategy for Alice for the $\eps_0$ Cantor potential game on $B_0$ for $E$, i.e., if $B$ is the $i$th move of Bob, denote by $\cala'_{i+1,k}(B_i)$ Alice's $i+1$st move according to this winning strategy. Define the collections $\calb_n$ for $n\geq0$ and $\left\{\cala_{m,n}\right\}_{0\leq m \leq n}$ for $n\geq1$ by recursion as follows: $\calb_0=\left\{B_0\right\}$,
\[
\cala_{m,n} = \bigcup_{B\in\calb_m} \cala'_{m+1,n-m}(B),
\]
for every $n\geq1$ and $0\leq m\leq n$, and define $\calb_n$ as usual via \eqref{eq:cantorwinning}. Since $\cala'$ is a winning strategy, it implies that the limit set as in \eqref{eq:limitset} is contained in $E$.
Theorem~\ref{thm:potentialCantor} may then be reinterpreted as saying that the winning sets for the $\eps$ Cantor potential game are the same as the winning sets for the $\eps$ Cantor game.
\subsection{Cantor, absolute, and potential winning}\label{sec:CantorPotentialconnections}

We prove Theorem \ref{potentialequivR} as a special case of a more general result connecting potential winning sets with Cantor winning sets, corresponding to  the standard splitting structure on $\RR^N$.


\begin{theorem}\label{potentialequiv}
Let $(X,\SSS,U,f)$ be a splitting structure on a doubling metric space $X$, fix $B_0\in\BBB(X)$ and denote
\[
\delta = \HD(A_\infty(B_0)).
\]
Fix $\eps_0 \in \OC 01$, and let $c_0 = \delta(1 - \eps_0)$. Then a set $E\sub A_\infty(B_0)$ is $\eps_0$ Cantor winning on $B_0$ with respect to $(X,\SSS,U,f)$ if and only if $E$ is $c_0$ potential winning on $A_\infty(B_0)$.

In particular, $E$ is Cantor winning on $B_0$ if and only if $E$ is potential winning on $A_\infty(B_0)$.
\end{theorem}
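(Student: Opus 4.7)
I will establish the equivalence by arguing both implications, exploiting the identity $f(u)=u^\delta$ (valid for $u\in U$ by the assumption $\delta=\HD(A_\infty(B_0))$). This identity translates the level-by-level count bound $r_{m,n}\leq f(R)^{(n-m+1)(1-\eps)}$ of Cantor winning into a $\rad^c$-sum bound matching the potential game's budget exactly when $c=\delta(1-\eps)$; the strict inequalities $c>\delta(1-\eps)$ (for the forward direction) and $c<\delta(1-\eps)$ (for the reverse) provide the slack that makes the comparison converge. The doubling hypothesis on $X$ enters each time one needs to pass between arbitrary balls and splitting-structure balls, bounding covering multiplicities by a constant $C(X)$.

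For the forward direction (Cantor winning implies potential winning), I fix $c>c_0$ and $\beta>0$, choose $\eps\in (1-c/\delta,\eps_0)$, and pick $R\in U$ with $R\geq R_\eps$ and $R$ calibrated so that $R^{\delta(1-\eps)-c}\leq \beta^c/C(X)$; such $R$ exists for $\beta$ sufficiently small, while the complementary case $\beta$ close to $1$ is trivial because Alice's budget is then comparable to $\rad(B_i)^c$. By Theorem~\ref{thm:potentialCantor} applied with parameter $\eps$, Alice has a winning strategy in the $\eps$ Cantor game with parameter $R$; I would port this to the $(c,\beta)$ potential game by having Alice, on turn $i+1$, play exactly the at most $f(R)^{1-\eps}$ splitting-structure children of $B_i$ that her Cantor strategy marks as ``bad''. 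The calibration of $R$ against $\beta$ is precisely what bounds the total $\rad^c$-mass of those children under $(\beta\rad(B_i))^c$, and the winning property of the Cantor strategy forces $\bigcap_i B_i$ into the Cantor set $\KKK\subseteq E$.

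For the reverse direction (potential winning implies Cantor winning), I fix $\eps<\eps_0$, pick $c$ with $c_0<c<\delta(1-\eps)$, and take $\beta>0$ small and $R\in U$ large. I would build the required $(B_0,R,\vr)$ Cantor set by running Alice's winning $(c,\beta)$-strategy against a tree of canonical Bob plays: for every nested sequence $B_0\supseteq B_{(1)}\supseteq \cdots \supseteq B_{(m)}$ with $B_{(k)}\in \SSS(B_{(k-1)},R)$, I simulate Bob playing this sequence, record Alice's responses, and round each response ball up to the at most $C(X)$ splitting-structure balls of radius $\rad(B_0)/R^{n+1}$ that meet it; the rounded balls arising this way are declared to form $\AAA_{m,n}(B_{(m)})$. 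The budget $\sum\rad^c\leq (\beta\rad(B_{(m)}))^c$ then translates into $\#\AAA_{m,n}(B_{(m)})\leq C(X)\cdot \beta^c R^{(n-m+1)c}$, which sits below the required $f(R)^{(n-m+1)(1-\eps)}=R^{(n-m+1)\delta(1-\eps)}$ precisely because $c<\delta(1-\eps)$ and $R$ is large enough to absorb the doubling constant. The resulting limit set is contained in $E$ because each of its points is $\bigcap_i B_i$ for an Alice-winning Bob play.

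The main obstacle in both directions is the simultaneous calibration of $R$, $\beta$ and $\eps$: the doubling constant must be absorbed into the exponent gap, which forces $R$ to be taken \emph{small} relative to $\beta^{-1}$ in the forward direction (possible because $\eps_0$ Cantor winning quantifies over all sufficiently large $R\in U$) and $\beta$ to be taken \emph{small} relative to $R$ in the reverse direction (producing many potential turns per Cantor level, which the ``tree of Bobs'' construction must track coherently across branches, with consistency of the $\AAA_{m,n}(B_{(m)})$ across different Bob histories being the most delicate bookkeeping step).
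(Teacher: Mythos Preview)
Your backward direction is essentially the paper's argument, with a slightly different calibration: you take $c<\delta(1-\eps)$ and absorb the doubling constant $C(X)$ into the positive exponent gap $\delta(1-\eps)-c$ by taking $R$ large, whereas the paper takes $c=\delta(1-\eps)$ and absorbs $C(X)$ by taking $\beta=R^{-q}$ with $q$ large (so Bob's simulated moves jump $q$ splitting levels at a time, and the resulting Cantor set has $r_{m,n}\neq 0$ only for $m\in q\NN$). Either works; your version is arguably cleaner once you note that $\beta$ may be chosen depending on $R$ (since potential winning holds for \emph{all} $\beta$), which resolves the ``consistency across histories'' worry you flagged: each $B_{(m)}$ in the tree has a unique ancestry, so $\AAA_{m,n}(B_{(m)})$ is well-defined.

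Your forward direction has a genuine gap. You propose to invoke Theorem~\ref{thm:potentialCantor} to obtain Alice's winning strategy in the $\eps$ Cantor game and then ``port'' it by having Alice, after Bob's potential-game move $B_i$, delete the splitting-structure children of $B_i$ that the Cantor strategy marks as bad. But in the potential game Bob's balls $B_i$ are \emph{arbitrary} balls with $\rad(B_{i+1})\geq\beta\,\rad(B_i)$; they are not elements of the splitting structure, their radii do not shrink by the fixed factor $R$, and (crucially) Bob is not obliged to avoid Alice's deletions --- if his next ball lies inside a deleted region the game simply continues, and the Cantor strategy gives Alice no guidance there. So ``children of $B_i$'' and ``what the Cantor strategy marks as bad given history $B_0,\ldots,B_i$'' are both undefined.

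The paper's fix is to use the Cantor winning hypothesis \emph{statically} rather than dynamically: by definition (or via Corollary~\ref{corl:cantor}) $E$ contains a fixed $(B_0,R,\vr)$ Cantor set $\KKK$ with removed families $\AAA_{m,n}$. Alice's potential-game strategy is then: after Bob plays $D_k$, compute the scale $m=m_k$ with $\beta\,\rad(D_k)<R^{-m}\rho\leq\rad(D_k)$ and delete every element of $\bigcup_{n\geq m}\AAA_{m,n}$ meeting $D_k$. Your calibration $R^{\delta(1-\eps)-c}\ll\beta^c$ is exactly what makes this collection satisfy the budget \eqref{eq:potentialLegal} (after a doubling-constant factor counting the $\BBB_m$-balls meeting $D_k$). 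If Alice does not win by default then the outcome avoids every removed ball of $\KKK$, hence lies in $\KKK\subseteq E$. In short: extract $\KKK$ first, then delete its removed balls --- do not try to play the Cantor game in real time against an opponent who is not playing the Cantor game.
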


Since potential winning is defined on metric spaces without using a splitting structure, we get the following observation:

\begin{corol}
Cantor winning
in a doubling metric space $X$ is independent of the choice of splitting structure, provided that the limit set $A_\infty(B)$ is fixed.
\end{corol}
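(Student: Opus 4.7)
The plan is to deduce the corollary directly from Theorem \ref{potentialequiv}, exploiting the fact that the equivalence furnished there reformulates Cantor winning as a property (namely, potential winning on $A_\infty(B_0)$) which makes no reference to the ambient splitting structure. First I would fix a ball $B_0 \in \BBB(X)$ and two splitting structures $(X,\SSS_i,U_i,f_i)$ for $i=1,2$ sharing the common limit set $A := A_\infty^{(1)}(B_0) = A_\infty^{(2)}(B_0)$.

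The key observation is that the dimensional parameter $\delta$ appearing in Theorem \ref{potentialequiv} equals $\HD(A)$, and therefore is the same for both splitting structures; in particular the correspondence $\eps_0 \mapsto c_0 = \delta(1-\eps_0)$ is identical for $i=1$ and $i=2$, despite \emph{a priori} being defined via the structure-dependent quantities $f_i$ and $U_i$. Then, for each $\eps_0 \in (0,1]$ and each $E \sub A$, I would apply Theorem \ref{potentialequiv} once for each splitting structure to conclude that the condition ``$E$ is $\eps_0$ Cantor winning on $B_0$ with respect to $(X,\SSS_i,U_i,f_i)$'' is equivalent, for both values of $i$, to the single structure-free condition ``$E$ is $c_0$ potential winning on $A$''. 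Consequently the two notions of $\eps_0$ Cantor winning on $B_0$ agree for all $\eps_0$, and taking the union over $\eps_0 \in (0,1]$ then yields the equivalence of the unqualified Cantor winning properties on $B_0$.

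The main (and essentially only) point requiring attention is the identification $\delta = \HD(A_\infty(B_0))$ across both structures; once this is in hand, the corollary is a formal consequence of the theorem, since the potential game on $A$ is defined using only the metric inherited from $X$ and is therefore unaffected by which splitting structure happens to produce $A$ as its limit set. No additional game-theoretic argument is required, and no new obstacle arises beyond invoking the already-established equivalence.
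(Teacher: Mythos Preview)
Your proposal is correct and matches the paper's approach: the corollary is stated without proof in the paper precisely because it is an immediate formal consequence of Theorem~\ref{potentialequiv}, via exactly the argument you outline (the potential winning condition on $A_\infty(B_0)$ is splitting-structure-free, and the parameter translation $\eps_0 \leftrightarrow c_0 = \delta(1-\eps_0)$ depends only on $\delta = \HD(A_\infty(B_0))$).
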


A combination of Proposition~\ref{thm:potential_implies_absolute} and
Theorem~\ref{potentialequiv} 
yields the following 
corollary:

\begin{corol}\label{CWabs}
Assume a complete metric space $X$ is doubling and is endowed with a nontrivial splitting structure~$(X,\SSS,U,f)$, and let $B_0 \in \BBB(X)$. Then, a set $E \sub X$ is $1$ Cantor winning on $B_0$ if and only if $E$ is absolute winning on $A_\infty(B_0)$. In particular, for any splitting structure such that $B=A_\infty(B)$ for every ball $B$, a set $E \sub X$ is $1$ Cantor winning if and only if $E$ is absolute winning.
\end{corol}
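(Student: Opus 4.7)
My plan is to deduce the corollary by applying Theorem \ref{potentialequiv} at the boundary value $\eps_0 = 1$ and chaining with the two implications between $0$ potential winning and absolute winning in complete doubling metric spaces.

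First I would invoke Theorem \ref{potentialequiv} with $\eps_0 = 1$. Since $f(u) = u^\delta$ for every $u \in U$, the critical potential exponent becomes $c_0 = \delta(1-1) = 0$, and the theorem yields the equivalence: for $E \subset A_\infty(B_0)$, being $1$ Cantor winning on $B_0$ with respect to $(X,\SSS,U,f)$ is the same as being $0$ potential winning on $A_\infty(B_0)$. The reduction to subsets of $A_\infty(B_0)$ is harmless since in the Cantor game Bob's balls all stay inside $A_\infty(B_0)$, and the absolute game on $A_\infty(B_0)$ only sees $E \cap A_\infty(B_0)$ by hypothesis.

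Next, the space $A_\infty(B_0)$ is both complete (as a closed subset of $X$) and doubling (as a subset of the doubling space $X$). Proposition \ref{thm:potential_implies_absolute} therefore applies and furnishes the implication from $0$ potential winning to absolute winning on $A_\infty(B_0)$. For the reverse implication, I would argue directly via the Cantor game of Section \ref{sec:cantor_game}: given Alice's absolute $\beta$ winning strategy with $\beta = 1/R$, her chosen ball $A_{i+1}$ has radius at most $\rad(B_i)/R$, and by the doubling property of $X$ it meets only $O(1)$ of the sub-balls in $\frac{1}{R}\{B_i\}$ with a bound independent of $R$. Since $f(R)^{1-\eps} = R^{\delta(1-\eps)} \to \infty$ as $R \to \infty$, for every $\eps < 1$ there exists $R_\eps \in U$ such that for $R \geq R_\eps$ the quantity $f(R)^{1-\eps}$ exceeds this doubling bound, so Alice can legally remove all sub-balls meeting $A_{i+1}$ within the budget of the $\eps$ Cantor game. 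Theorem \ref{thm:potentialCantor} then identifies this strategy with $\eps$ Cantor winning on $B_0$, and taking the union over $\eps < 1$ gives $1$ Cantor winning on $B_0$.

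The ``in particular'' statement follows immediately because when $B = A_\infty(B)$ for every ball $B$, the qualifiers ``on $B_0$'' and ``on $A_\infty(B_0)$'' coincide, so quantifying over every ball $B_0 \in \BBB(X)$ converts local to global equivalence. The main obstacle I anticipate lies in the reverse direction: verifying carefully that the doubling-dependent $O(1)$ bound on the number of splitting sub-balls meeting $A_{i+1}$ is uniform in $R$ and in Bob's history, and that stitching Alice's per-round absolute moves into a consistent Cantor-game strategy respects both the inequality $\#\cala_{i+1} \leq f(R)^{1-\eps}$ and the outcome condition~\eqref{eq:outcome}.
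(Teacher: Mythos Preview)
Your forward direction (1 Cantor winning $\Rightarrow$ absolute winning) matches the paper exactly: apply Theorem~\ref{potentialequiv} at $\eps_0=1$ to land at $0$ potential winning on $A_\infty(B_0)$, then invoke Proposition~\ref{thm:potential_implies_absolute}. The paper's entire proof is the single sentence ``A combination of Proposition~\ref{thm:potential_implies_absolute} and Theorem~\ref{potentialequiv} immediately yields the following additional corollary,'' and it treats the equivalence absolute winning $\Leftrightarrow$ $0$ potential winning as known from \cite{FSU4} (this is the solid double arrow in Figure~\ref{fig:flow}), so the reverse direction in the paper also runs through the potential game.

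Your reverse direction takes a genuinely different route: rather than appealing to the converse of Proposition~\ref{thm:potential_implies_absolute} (which the paper uses implicitly but never states), you argue absolute winning $\Rightarrow$ $1$ Cantor winning directly by simulating Alice's absolute strategy inside the splitting structure, removing the $O(1)$ sub-balls of $\frac{1}{R}\{B_i\}$ that meet her deleted ball. This is essentially the argument behind Proposition~\ref{BHabs}, transplanted to a general doubling space. It is correct and has the merit of being self-contained, avoiding the need to check that absolute winning implies $0$ potential winning in this generality. Two small points: the $O(1)$ bound you need is precisely condition~(S4) (which follows from doubling), and your invocation of Theorem~\ref{thm:potentialCantor} is misplaced --- that theorem gives the \emph{harder} direction (Cantor winning $\Rightarrow$ Cantor game winning), whereas what you actually use is the trivial direction stated just before it, or more simply you are directly building the required $(B_0,R,C)$ local Cantor set and verifying Definition~\ref{def:cantorWinning}.
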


\begin{rem}
The statements in \cite[Section~7]{BadziahinHarrap} immediately provide us
with applications of Corollary~\ref{CWabs}. In particular, various sets from
\cite[Theorems 14 and 15]{BadziahinHarrap} are in fact absolutely winning.
\end{rem}


We now prove Theorem \ref{potentialequiv}:

\begin{proof}[Proof of Forward Direction]
Since $X$ is doubling, we may assume that $f(u) = u^{\delta}$ for every $u\in U$ (cf. \cite[Corollary 1]{BadziahinHarrap}). 
Suppose that $E$ is $\eps_0$ Cantor winning, and fix $\beta > 0$, $c > c_0$, and $\rho_0 >
0$. Fix $0 < \eps < \eps_0$ such that $\delta(1 - \eps) \in (c_0,c)$. Fix
large $R\in U$. Its precise value will be determined later. Potentially $R$
may depend on $\eps$, $\beta$, $c$, and $\rho_0$, and in particular it
satisfies $R \geq \max(R_\eps,1/\beta)$ and
$R^{\delta(1-\varepsilon)-c}<\frac12$. Then by the definition of Cantor winning,
$E$ contains some $\left(B_0,R,\vr\right)$ Cantor set $\KKK$, where
    \[
    r_{m,n} = f(R)^{(n - m + 1)(1 - \eps)} = R^{\delta(n - m + 1)(1 - \eps)} \text{ for all $m,n$ with $m\leq n$.}
    \]
We now describe a strategy for Alice to win the $(c,\beta)$ potential game on
$A_\infty(B_0)$, assuming that Bob's starting move has radius $\rho_0$.
Recall that for any $B\in \BBB_m$,
$$\AAA_{m,n}(B):=\left\{ A \in \AAA_{m,n}:\, A \sub B   \right\}.$$
Let $\rho$ denote the radius of $B_0$, let $D_k$ denote Bob's $k$th move,\footnote{We are denoting Bob's $k$th move by $D_k$ instead of $B_m$ so as to reserve the letters $B$ and $m$ for the splitting structure framework.}
and for each
$k\in\NN$ let $m = m_k\in\NN$ denote the unique integer such that $\beta\cdot
\rad\left(D_k\right) < R^{-m}\rho \leq \rad\left(D_k\right)$, assuming such an integer exists. If
such a number does not exist then we say that $m_k$ is undefined. Then
Alice's strategy is as follows: On turn $k$, if $m_k$ is defined, then remove all
elements of the set
    \[
\bigcup_{n\geq m} \AAA_{m,n} = \bigcup_{B\in \BBB_m} \bigcup_{n\geq m} \AAA_{m,n}(B)
    \]
that intersect Bob's current choice. If $m_k$ is undefined, then delete
nothing. Obviously, this strategy, if executable, will make Alice win since
the intersection of Bob's balls will satisfy $\bigcap_{k \in \NN}D_k \sub
\KKK \sub E$. To show that it is legal, we need to show that
\begin{equation}\label{NTS}
        \sum_{\substack{B\in \BBB_m \\ B\cap D_k \neq \smallemptyset}} \sum_{n \geq m} r_{m,n} \left(R^{-(n + 1)}\rho\right)^c \leq \left(\beta\cdot \rad(D_k)\right)^c.
\end{equation}
This is because elements of $\AAA_{m,n}(B)$ all have radius $R^{-(n + 1)}\rho$.

To estimate the sum in question, we use the fact that $X$ is doubling.
Since the elements of~$\BBB_m$ have disjoint interiors and have radius $R^{-m} \rho
\asymp_\beta \rad(D_k)$, the number of them that intersect~$D_k$ is bounded
by a constant depending only on $\beta$. Call this constant $C_1$. Then the
left-hand side of \eqref{NTS} is less than
    \begin{align*}
        C_1 \sum_{n\geq m} R^{\delta(n - m + 1)(1 - \eps)} \left(R^{-(n + 1)}\rho\right)^c
        &= C_1 \left(R^{-m} \rho\right)^c \sum_{\ell = 1}^\infty R^{\delta(1 - \eps)\ell} R^{-c\ell}\\
        &\le 2 C_1 \left(\rad(D_k)\right)^c R^{\delta(1 - \eps) - c}. \qquad\left(\mbox{since $R^{\delta(1 - \eps) - c}\le \frac12$}\right)
    \end{align*}
    By choosing $R$ large enough so that
    \[
    2C_1 R^{\delta(1 - \eps) - c} < \beta^c,
    \]
    we guarantee that the move is legal.
\end{proof}

\begin{proof}[Proof of Backward Direction]
Suppose that $E \sub A_\infty(B_0)$ is $c_0$ potential winning, and fix $0 <
\eps < \eps_0$. Let $R_\eps = 2$, and fix $R\in U$ such that $R\geq R_\eps$.
Fix a large integer $q\in\NN$ to be determined, and let $\beta = 1/R^q$ and
$c = \delta(1 - \eps)$. Then $E$ is $(c,\beta)$ potential winning. Now for
each $k\in\NN$ and $B\in\BBB_{qk}$, let $\AAA(B)$ denote the
collection of sets that Alice deletes in response to Bob's $k$th move
$B_k=B$, assuming that it has been preceded by the moves
$B_0,B_1,\ldots,B_{k-1}$ satisfying $B_{i + 1}\in
\SSS(B_i,R^q)$ for all $i = 0,\ldots,k - 1$.\footnote{It is necessary to
specify the history in order to uniquely identify Alice's response because
unlike the other variants of Schmidt's game, the potential game is not
a \textit{positional} game (see \S\ref{winprelim} and \cite[Theorem 7]{Schmidt1}). This is because the
regions that were deleted at previous stages of the game affect the overall
situation and therefore may affect Alice's strategy.} Since Alice's strategy
is winning, we must have
    \[
    E\subp A_\infty(B_0) \butnot \bigcup_{k\in\NN} \bigcup_{B\in \BBB_{qk}} \bigcup_{A\in \AAA(B)} A.
    \]
    Indeed, for every point on the right-hand side, there is some strategy that Bob can use to force the outcome to equal that point (and also not lose by default). So to complete the proof, it suffices to show that the right-hand side contains a $\left(B_0,R,\vr\right)$ Cantor set, where
    \[
    r_{m,n} = f(R)^{(n - m + 1)(1 - \eps)} = R^{\delta(n - m + 1)(1 - \eps)} = R^{c(n - m + 1)} \text{ for all $m,n$ with $m\leq n$.}
    \]
For each $m = qk \leq n$ and $B\in\BBB_m$ let $\CCC(n)$ be the collection
of elements of $\AAA(B)$ whose radius is between $R^{-n}\rho$ and $R^{-(n +
1)}\rho$,
where $\rho$ is the radius of $B_0$. Define
$$
\AAA_{m,n} (B):= \left\{ A\in \SSS(B,R^{n - m + 1})\;:\; \exists C\in\CCC(n) \mbox{ such that } A\cap C\neq\emptyset\right\}.
$$
By \eqref{eq:potentialLegal} and the definition of $\CCC(n)$, we have
    \[
    \sum_{C\in\CCC(n)} \left(R^{-(n+1)}\rho\right)^c \leq \left(\beta R^{-m}\rho\right)^c;
    \]
    i.e.
    \[
    \#\CCC(n) \leq \beta^c \left(R^{n - m+1}\right)^c.
    \]
This implies that for $m>n$ we get $\#\CCC(n)<1$, i.e. $\CCC(n) = \emptyset$
and in turn $\AAA_{m,n}(B) = \emptyset$. Now we construct a Cantor set
$\KKK\in E$ by removing the collections $\AAA_{m,n}(B)$ ($m\leq n$, $B\in
\BBB_m$). Then to complete the proof, we just need to show that
    \[
    \#\AAA_{m,n}(B) \leq r_{m,n} = R^{c(n - m + 1)}.
    \]
for every $B\in\BBB_m$. By the doubling property each element of $\CCC(n)$ intersects a bounded
number of elements of $\SSS\left(B,R^{n - m + 1}\right)$. Thus
    \[
    \#\AAA_{m,n}(B) \leq C_3 \#\CCC(n)\leq  C_3 \beta^c \left(R^{n - m+1}\right)^c
    \]
    for some constant $C_3 > 0$ depending on $R$. By letting $\beta$ be sufficiently small (i.e. letting $q$ be sufficiently
    large), we can get $C_3 \beta^c \leq 1$ and therefore
    \[
    C_3 \beta^c \left(R^{n - m+1}\right)^c \leq R^{(n - m + 1)c},
    \]
    which completes the proof.
\end{proof}

Finally, we briefly mention how the second statement of
Theorem~\ref{potentialequivR} follows from the first statement. If $E$ is 1
Cantor winning then, by the first part of the theorem, it is $c$ potential
winning for any $c>0$. The statement follows as an immediate corollary of
Proposition~\ref{thm:potential_implies_absolute}.

\subsection{Schmidt's winning and Cantor winning}\label{sec:SchmidtCantor}

We now move our attention to the links between Schmidt's original game and generalised Cantor set constructions. Thanks to Theorem~\ref{potentialequiv}, in $\delta$ Ahlfors regular spaces we can treat the notions of Cantor winning sets and potential winning sets as interchangeable. The latter notion will be more convenient in this section. We prove Theorem \ref{potentialschmidtRN} as a special case of a more general result:


\begin{theorem}\label{potentialschmidt}
\label{theoremschmidtgamerelation} Let $X$ be a $\delta$ Ahlfors regular
space and let $E\sub X$ be a $c_0$ potential winning set. Then for all $c >
c_0$, there exists $\gamma > 0$ such that for all $\alpha,\beta > 0$ with
$\alpha < \gamma(\alpha\beta)^{c/\delta}$, the set $E$ is $(\alpha,\beta)$
very strong winning.
\end{theorem}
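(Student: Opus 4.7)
The plan is to have Alice simulate her $c$ potential strategy inside the very strong $(\alpha,\beta)$ Schmidt game. Fix $c>c_0$ and set the potential-game parameter $\beta':=\alpha\beta$, so that any legal sequence of Bob moves in the very strong Schmidt game is also a legal sequence in the $(c,\beta')$ potential game (both games shrink Bob's ball by at least $\alpha\beta$ per turn). Under this identification, Alice's potential strategy produces at each turn $i+1$ a collection $\{C_k^{(i+1)}\}$ satisfying
\[
\sum_k\rad\!\left(C_k^{(i+1)}\right)^c\leq(\alpha\beta\,r_i)^c,
\]
where $r_i=\rad(B_i)$, with the winning guarantee $\bigcap_iB_i\sub E\cup\bigcup_{i,k}C_k^{(i)}$. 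It therefore suffices for Alice to arrange that every ball $C_k^{(j)}$ is eventually avoided by some $A_l$, because then all later $B_{l'}\sub A_l$ automatically miss $C_k^{(j)}$, and the desired inclusion $\bigcap_iB_i\sub E$ follows.

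At turn $i+1$, Alice applies Proposition \ref{prop_ahlfors} to $B_i$ to obtain a family of at least $J\geq c_1\alpha^{-\delta}$ candidate balls $D_1,\ldots,D_J$ of radius $\alpha r_i$ with $3\alpha r_i$-separated centres. She maintains a backlog of unhandled $C$-balls from previous turns whose closure still meets $B_i$, and chooses $A_{i+1}$ to be a $D_j$ disjoint from every ball (in the backlog or freshly produced) whose radius exceeds a threshold $T_{i+1}$ scaling like $\alpha\beta\,r_i\cdot\alpha^{\delta/c}$; balls below the threshold are carried forward. Because $T_{i+1}$ shrinks at essentially the same geometric rate as $r_i$, every fixed $C$-ball eventually rises above the threshold and gets handled before Bob's ball can fit inside it.

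The success of the selection step rests on two counting facts. First, a standard packing argument (using that any Ahlfors regular space is doubling) shows that each ball of radius at most $\alpha r_i$ is intersected by at most a universal constant $M$ of the candidates $D_j$. Second, the Chebyshev-type bound
\[
\big|\{k:\rad(C_k^{(j)})\geq T_{i+1}\}\big|\leq\frac{(\alpha\beta\,r_{j-1})^c}{T_{i+1}^c}
\]
combined with $r_{j-1}\leq\alpha^{j-1}r_0$ gives a geometric series that controls the total number of large backlog balls at turn $i+1$ by a quantity of order $(\alpha\beta\,r_i)^c/T_{i+1}^c$. With the choice of $T_{i+1}$ above, this count is of order $\alpha^{-\delta}$, and the requirement that it be strictly less than $J/M$ reduces to an inequality of the shape $\alpha^{\delta}<\gamma^{\delta}(\alpha\beta)^{c}$, which is exactly the hypothesis $\alpha<\gamma(\alpha\beta)^{c/\delta}$ for $\gamma$ chosen sufficiently small in terms of $c_1$, $M$, and $c$.

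The main technical obstacle will be the bookkeeping of the backlog, which is pulled in two opposing directions: the threshold $T_{i+1}$ must shrink quickly enough that every ball eventually gets handled (otherwise an unaddressed $C$-ball of tiny radius could survive until Bob's ball fits inside it), yet slowly enough that the set of above-threshold balls remains of cardinality less than $J/M$ at every turn. Balancing these two demands via the geometric sum above is what forces the precise scaling $T_{i+1}\sim\alpha\beta\,r_i\cdot\alpha^{\delta/c}$, and this is the step where the sharp form of the hypothesis $\alpha<\gamma(\alpha\beta)^{c/\delta}$ is consumed.
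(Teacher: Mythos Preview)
Your overall plan---simulate the potential strategy inside the Schmidt game and dodge the removed balls---is the right idea and matches the paper's, but the counting argument you propose for the backlog does not work. When you sum the Chebyshev bound $|\{k:\rad(C_k^{(j)})\geq T_{i+1}\}|\leq (\alpha\beta\,r_{j-1})^c/T_{i+1}^c$ over $j=1,\ldots,i+1$ using $r_{j-1}\leq\alpha^{j-1}r_0$, the resulting geometric series $\sum_j(\alpha^{j-1})^c$ is dominated by its \emph{first} term, not its last: you obtain a quantity of order $(\alpha\beta\,r_0)^c/T_{i+1}^c$, not $(\alpha\beta\,r_i)^c/T_{i+1}^c$. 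Since $r_i\to 0$ while $r_0$ is fixed, this ratio blows up with $i$ and cannot be kept below $J/M\asymp\alpha^{-\delta}$ uniformly. The clause ``still meets $B_i$'' in your backlog definition cannot rescue this, because your Chebyshev estimate does not use it. The underlying problem is that merely avoiding above-threshold balls imposes no control on how much of the sub-threshold mass you carry forward lands inside the shrinking ball $B_i$.

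The paper resolves this by replacing your threshold/counting scheme with potential minimisation: Alice picks $A_{m+1}\sub B_m$ to \emph{minimise} $\varphi_m(A)=\sum_{C:\,C\cap A\neq\emptyset}\diam^c(C)$ over the $\asymp\alpha^{-\delta}$ separated candidate balls from Proposition~\ref{prop_ahlfors}. The induction hypothesis $\varphi_m(A_{m+1})\leq(\eps\,r_m)^c$ forces every surviving $C$ to have diameter at most $\eps\,r_m$, hence (with $\eps\leq\alpha^2\beta$) to intersect at most one candidate; so the candidates' potentials sum to at most $\varphi_m(A_{m+1})$, and pigeonhole gives $\varphi_m(A_{m+2})\lesssim\alpha^{\delta}\varphi_m(A_{m+1})$. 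Under the hypothesis $\alpha<\gamma(\alpha\beta)^{c/\delta}$ this becomes $\varphi_m(A_{m+2})\leq\tfrac12(\alpha\beta)^c\varphi_m(A_{m+1})$, and the induction closes once the fresh contribution per step is small enough---which the paper arranges by running the potential game with parameter $\tilde\beta=(\alpha\beta)^q$ for large $q$ (not your $\beta'=\alpha\beta$) and querying it only every $q$ Schmidt turns. Your backlog scheme could be repaired by tracking such a potential inductively rather than counting balls, but at that point it becomes the paper's argument.
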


To see that Theorem \ref{potentialschmidtRN} follows from Theorem
\ref{potentialschmidt},
notice that $\RR^N$ is
$N$ Ahlfors regular, and that very strong $(\alpha,\beta)$ winning sets are
$(\alpha,\beta)$ winning. When $E$ is $\eps$ Cantor winning, the desired statement follows upon taking $c_0=N(1-\eps)$ in the above. The reader may wish to compare the following proof with the proof of Theorem \ref{thm:potentialCantor}.

\begin{proof}
Consider a strategy for Alice to win the $(c,\w\beta)$ potential game for the
set $E$, where $\w\beta = (\alpha\beta)^q$ for some large $q\in\NN$. We want
to show that Alice has a winning strategy in the $(\alpha,\beta)$ very strong game.
Given any sequence of Bob's choices in the $(\alpha,\beta)$ game, say
$B_0,\ldots,B_{qk}$, we can consider the corresponding sequence of choices in
the $(c,\w\beta)$ potential game which correspond to Bob choosing the balls
$B_0,B_q,\cdots,B_{qk}$. Alice's response to this sequence of moves is to
delete a collection $\AAA(B_{qk})$. Now Alice's corresponding response in the
$(\alpha,\beta)$ very strong game will be: if Bob has made the sequence of moves
$B_0,\ldots,B_m$, then she will choose her ball $A_{m+1} \sub B_m$ so as to
minimize the value at $A_{m+1}$ of the potential function 
    \[
    \varphi_m(A) := \sum_{\substack{k\in\NN \\ qk \leq m}} \sum_{\substack{C\in\AAA(B_{qk}) \\ C\cap A \neq \smallemptyset}} \diam^c(C).
    \]
    We will now prove by induction that the inequality
    \[
    \varphi_m\left(A_{m+1}\right) \leq (\eps \cdot \rad\left(B_m\right))^c
    \]
holds for all $m$, where $\eps > 0$ will be chosen later (small but
independent of $\w\beta$).
Suppose that it holds for some
$m$, and let $B_{m + 1}$ be Bob's next move.
Define $r = \alpha\cdot \rad(B_{m+1})\ge \alpha^2\beta\cdot \rad(B_m)$.
By Proposition~\ref{prop_ahlfors}, there exists a set
$$
\DDD:= \{B\left(x_i,r\right)\sub B_{m+1}\;:\; i\in \{1,\ldots,I\}\},
$$
which satisfies the following conditions: $I = \#\DDD \gg \alpha^{-\delta}$,
and for any distinct values $1\le i\neq j\le I$ one has $d(x_i,x_j)\ge 3r$,
i. e. all balls in $\DDD$ are separated by distances of at least $r$.

By the induction hypothesis, for all $C\in \bigcup_{qk\leq m} \AAA(B_{qk})$
such that $C\cap A_{m+1} \neq \emptyset$, we have $\diam(C) \leq \eps\cdot \rad(B_m)$. So by letting $\eps$ be small enough (i.e. $\eps \leq
\alpha^2\beta$), we can guarantee that all balls in $\DDD$ make disjoint
contributions to $\varphi_m(A_{m+1})$. In other words,
$$
\sum_{D\in \DDD} \varphi_m(D) \le \varphi_m(A_{m+1}).
$$
By choosing $A_{m + 2}$ to be the ball from $\DDD$ which minimises the
function $\varphi_m$, we get
\[
\alpha^{-\delta} \varphi_m(A_{m + 2}) \ll \varphi_m(A_{m+1}).
\]
By choosing $\gamma$ small enough, the inequality $\alpha<
\gamma(\alpha\beta)^{c/\delta}$ implies that
\[
\varphi_m(A_{m + 2}) \leq (\alpha\beta)^c \varphi_m(A_{m+1})/2 = \frac12 (\eps \cdot\rad(B_{m+1}))^c.
\]
Note that for the base of induction, $m=-1$, the last estimate is straightforward, since $\varphi_m(A) = 0$ for all balls $A$, and we can set $A_0=B_0$. 

When we consider $\varphi_{m+1}(A_{m+2})$ there may be a new term coming from
a new collection $\AAA(B_{m + 1})$. By the definition of a $(c,\w\beta)$
potential game, that term is at most $(\w\beta \cdot\rad(B_{m+1}))^c$,
and thus by choosing $q$ large enough we can guarantee that
$$
\varphi_{m+1}(A_{m+2}) \le (\eps (\alpha\beta)^{m+1}\rho)^c.
$$
This finishes the induction.

Since Alice followed the winning strategy for $(c,\w\beta)$ potential game,
we have either
$$
\bigcap_{k=1}^\infty B_{kq} \sub E\quad\mbox{or}\quad \bigcap_{k=1}^\infty B_{kq} \sub \bigcup_{k=1}^\infty \bigcup_{A\in\AAA(B_{kq})} A.
$$
To finish the proof of the theorem, we need to show that the second inclusion
is impossible. If we assume the contrary, then there exists $A\in \AAA(B_{kq})$ for
some $k$ such that every ball $B_m$ and in turn every ball $A_m$ has
nonempty intersection with $A$. But in that case we have $\varphi_m(A_{m+1}) \geq
\diam^c(A)$, which contradicts the fact that $\varphi_m(A_{m+1}) \to 0$ as $m\to\infty$.
\end{proof}

As a corollary of Theorem \ref{potentialschmidt}, we get a general form of Theorem \ref{theoremPWWintersectionR}:

\begin{corol}\label{theoremPWWintersection}
    In an Ahlfors regular space, the intersection of a weak winning set and a potential winning set has full Hausdorff dimension. 
\end{corol}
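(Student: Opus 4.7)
The plan is to construct, for each $\varepsilon>0$, a Cantor subset of $E_1\cap E_2$ of Hausdorff dimension at least $\delta-\varepsilon$. The key input is Theorem~\ref{potentialschmidt}, which upgrades the potential winning property of $E_2$ to an $(\alpha_2,\beta_2)$ very strong winning property for any pair with $\alpha_2<\gamma(\alpha_2\beta_2)^{c/\delta}$, where $c>c_0$ and $\gamma>0$ are as in the statement. Let $\sigma_2$ denote the resulting very strong winning strategy for Alice, and let $\sigma_1$ be her $\alpha_1$-weak winning strategy for $E_1$.

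I would combine these via a tree construction. At a node corresponding to a partial Bob history ending in a ball $B$, Alice first consults $\sigma_1$ to obtain $A^{(1)}\subseteq B$ with $\rad(A^{(1)})\geq\alpha_1\rad(B)$, then applies $\sigma_2$ (treating $A^{(1)}$ as Bob's latest move in the $E_2$ very strong game) to obtain $A^{(2)}\subseteq A^{(1)}$ with $\rad(A^{(2)})=\alpha_2\rad(A^{(1)})$. Because $X$ is $\delta$ Ahlfors regular, Proposition~\ref{prop_ahlfors} furnishes at least $c_1\beta_2^{-\delta}$ pairwise well-separated sub-balls of radius $\beta_2\rad(A^{(2)})$ inside $A^{(2)}$; these become the children of $B$ in the tree. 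Each child is a legal next move in both sub-games, so the winning properties of $\sigma_1$ and $\sigma_2$ guarantee that every infinite branch of the tree converges to a point of $E_1\cap E_2$.

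The branching factor $\gtrsim\beta_2^{-\delta}$ against a shrink ratio $\alpha_1\alpha_2\beta_2$ per level yields, via a standard mass distribution argument, a Cantor set of Hausdorff dimension at least $\log(c_1\beta_2^{-\delta})/\log((\alpha_1\alpha_2\beta_2)^{-1})$. By taking $\beta_2$ small compared to $\alpha_1\alpha_2$, inside the admissible region from Theorem~\ref{potentialschmidt}, this quotient can be pushed arbitrarily close to $\delta$, and letting $\varepsilon\to 0$ delivers the corollary.

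The hard part will be maintaining consistency of $\sigma_2$'s perceived history across tree rounds: the very strong game requires Bob's next move (namely the $A^{(1)}$ produced at the next round) to have radius at least $\beta_2\rad(A^{(2)})$, while $\sigma_1$ a priori only guarantees $\rad(A^{(1)})\geq\alpha_1\rad(B')$, which may be strictly smaller. The resolution exploits the inequality available to Alice in the weak game: she may freely enlarge $A^{(1)}$ up to all of $B'$ so as to meet both bounds simultaneously; equivalently, one runs $\sigma_2$ on a coarser time scale, invoking it only every $k$ rounds of $\sigma_1$ for a suitably large $k$ so that the cumulative shrinkage from $\sigma_1$ still fits inside $\sigma_2$'s very strong constraint. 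Making this bookkeeping rigorous is the main technical point of the argument.
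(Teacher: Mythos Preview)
Your direct Cantor-set construction is a natural idea, but it contains a genuine quantitative gap. You assert that the Hausdorff dimension of the resulting tree,
\[
\frac{\log\bigl(c_1\beta_2^{-\delta}\bigr)}{\log\bigl((\alpha_1\alpha_2\beta_2)^{-1}\bigr)},
\]
can be pushed arbitrarily close to $\delta$ by taking $\beta_2$ small relative to $\alpha_1\alpha_2$. This is only true if $\alpha_2$ can be held bounded away from $0$ while $\beta_2\to 0$, but the admissible region in Theorem~\ref{potentialschmidt} does not permit this: the constraint $\alpha_2<\gamma(\alpha_2\beta_2)^{c/\delta}$ rewrites as $\alpha_2<\gamma^{\delta/(\delta-c)}\beta_2^{c/(\delta-c)}$, so $\alpha_2$ is forced to decay like a power of $\beta_2$. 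Substituting this relation, the shrink ratio satisfies $\alpha_2\beta_2\asymp\beta_2^{\delta/(\delta-c)}$, and the quotient above tends to $\delta-c$, not to $\delta$. Optimising over $c>c_0$ therefore gives only $\dim_H(E_1\cap E_2)\geq\delta-c_0$. No refinement of the interleaving (running $\sigma_2$ on a coarser time scale, enlarging $A^{(1)}$, etc.) escapes this, because the same coupling between $\alpha_2$ and $\beta_2$ governs the branching-versus-shrink tradeoff regardless of how many $\sigma_1$ rounds you insert between $\sigma_2$ moves; the computation still collapses to $\delta-c$.

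The paper sidesteps this obstruction with an indirect argument. It first uses Theorem~\ref{potentialschmidt} together with Proposition~\ref{prop_enote} to show that the complement of a potential winning set can never be weak winning, hence every potential winning set meets every weak winning set. Then, assuming for contradiction that $\dim_H(E\cap T)<\delta$, one covers $E\cap T$ cheaply to conclude that $X\setminus(E\cap T)$ is potential winning (as in Proposition~\ref{propositionfirstturn}); the countable intersection property for potential winning sets then makes $E\setminus T$, and hence $X\setminus T$, potential winning---contradicting the first step since $T$ is weak winning. This avoids any explicit dimension computation with coupled parameters.
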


\begin{proof}
Let $E\sub X$ be a $c_0$ potential winning set for some $c_0<\delta$. Fix $c_0 < c <
\delta$. By Theorem~\ref{potentialschmidt}, there exists $\gamma > 0$ such
that for all $\alpha,\beta>0$ with $\alpha < \gamma
(\alpha\beta)^{c/\delta}$, $E$ is $(\alpha,\beta)$ very strong winning.
In particular, for all $\beta > 0$ there exists $\alpha > 0$ such that $E$ is
$(\alpha,\beta)$ very strong winning. Hence we can apply
Proposition~\ref{prop_enote} which says that $X\butnot E$ is not weak winning (and
thus does not contain a weak winning set). So every potential winning set
intersects every weak winning set nontrivially.

Now by contradiction suppose that $T$ is a weak winning set such that $\HD(E\cap
T) < \delta$. This implies that for any $c \in (\HD(E\cap T), \delta)$ and
for any $\eps>0$ one can construct a cover of $E\cap T$ by a countable
collection $(A_i)_{i\in \NN}$ of balls such that
$$
\sum_{i=1}^\infty \diam^c(A_i) <\eps.
$$
In other words, this cover can be chosen by Alice in a single move of the $c$
potential game, so $X\butnot (E\cap T)$ is potential winning. So by the
intersection property of potential winning sets, $E\butnot T = E\cap (X\butnot (E\cap T))$ is potential
winning. But then $X\butnot T$ is a potential winning set whose complement is weak
winning, contradicting what we have just shown.
\end{proof}

\subsection{Proof of Theorem \ref{thmwinningmainN}}\label{winprelim}

In this section we prove that $1/2$ winning implies absolute winning for subsets of the real line. The same notation, terminology, and observations as in~\cite{Schmidt1} are employed.
Recall that
$\BBB(\RR)$ stands for the set of all closed intervals in~$\RR$.

In Schmidt's $(\alpha, \beta)$ game, the method Alice uses to determine where to play her intervals~$A_i$
is called a \textit{strategy}. Formally, a strategy $F:=\{f_1, f_2, \ldots \}$ is a sequence of functions $f_i: \BBB(\RR)^i \rightarrow \BBB(\RR)$ satisfying $\rad(f_i(B_0, B_1, \ldots, B_{i-1})) = \alpha \cdot \rad(B_{i-1})$ and $f_i(B_0, B_1, \ldots, B_{i-1}) \sub B_{i-1}$. A set $E$ is a winning set for the $(\alpha, \beta)$ game if and only if there exists a strategy determining where Alice should place her intervals $A_{i}:=f_i(B_0, B_1, \ldots, B_{i-1})$ so that, however Bob chooses his intervals $B_{i} \sub A_{i}$, the intersection point $\bigcap_{i} B_i$ lies in $E$.  Such a strategy is referred to as a \textit{winning strategy (for $E$)}. Schmidt \cite[Theorem $7$]{Schmidt1} observed that the existence of a winning strategy guarantees the existence of a \textit{positional} winning strategy; that is, a winning strategy for which the placement of a given interval by Alice needs only to depend upon the
position of Bob's immediately preceding interval, not on the entirety of the game so far. To be precise, a winning strategy~$F:=(f_1, f_2, \ldots )$ is \textit{positional} if there is a function $f_0: \BBB(X) \rightarrow \BBB(\RR)$
such that each function $f_i \in F$ satisfies $f_i(B_0, \ldots, B_{i-1})=f_0(B_{i-1})$.  Employing a positional strategy will not give us any real advantage in the proof that follows; however, it will
allow us to simplify our notation somewhat. Without confusion we will write $A_i=F(B_{i-1}) = f_0(B_{i-1})$ where $F$ is a given positional winning strategy for Alice and $f_0$ is the function witnessing $F$'s positionality. 

We refer to a sequence~$\{B_0, B_1, \ldots \}$ of intervals as an \textit{$F$-chain} if it consists of the moves Bob has made during the playing out of an $(\alpha, \beta)$ game with target set $E$ in which Alice has followed the winning strategy $F$. By definition we must have $\bigcap_{i} B_i \, \sub E$. Furthermore, we say a finite sequence $\{B_0, \ldots B_{n-1}\}$ is an \textit{$F_n$-chain} if there exist $B_{n}, B_{n+1}, \ldots$ for which the infinite sequence $\{B_0, \ldots B_{n-1}, B_{n}, B_{n+1} \ldots \}$ is an $F$-chain. One can readily verify (see \cite[Lemma~$1$]{Schmidt1}) that if $\{B_0, B_1, \ldots \}$ is a sequence of intervals such that for every $n \in \NN$ the finite sequence~$\{B_0, B_1, \ldots B_{n-1}\}$ is an $F_n$-chain, then $\{B_0, B_1, \ldots \}$ is an $F$-chain.

\textit{Outline of the strategy.}
We begin with some interval $b_0 \sub \RR$ of length $2\cdot\rad(b_0)$, some $\eps \in (0, 1)$, some $R$ sufficiently large, and a $1/2$ winning set $E \sub \RR$. Fix $\alpha=1/2$ and $\beta=2/R$, so that $\alpha \beta = 1/R$. We will construct for every $R \geq 10^{1/(1-\eps)}$  a local $(b_0, R, 10)$ Cantor set~$\KKK$ lying inside $E$. 
This is sufficient to prove that $E$ is $1$ Cantor winning. 

By assumption the set $E$ is $(1/2, 2/R)$ winning; thus, there exists
a positional winning strategy~$F$ such that however we choose to place Bob's intervals $B_i$ in the game, the placement of Alice's subsequent intervals $A_i=F(B_{i-1})$ guarantees that the unique element of $\bigcap_i B_i$ will fall inside $E$. We will in a sense play as Bob in many simultaneous $(1/2, 2/R)$ games, each of which Alice will win by following her prescribed strategy~$F$.

Our procedure is as follows. The construction of the local Cantor set $\KKK$ comprises the construction of subcollections~$\BBB_n$ of intervals in $\frac 1 {R^n} \BBB_0$ for each $n \in \NN$, where $\BBB_0:=\{ b_0\}$. Construction will be carried out iteratively in such a way that for any interval $b \in \BBB_n$ there exists an $F_n$-chain $\{B_0, B_1, \ldots, B_{n-1}\}$ satisfying $b \sub B_{n-1}$. Moreover, for each \textit{ancestor} $b'$ of $b$ (that is, the unique interval $b' \in \BBB_k$  with $b \sub b'$, for some given $0 \leq k < n$) the subsequence $\{B_0, B_1, \ldots, B_{k-1}\} $ will coincide with the  $F_k$-chain constructed for $b'$ satisfying $b' \sub B_{k-1}$ from the earlier inductive steps. We write $B_n(b)$ if the dependence of the interval $B_n$ on an interval~$b$ is not clear from context. In view of the previous discussion, upon completion of the iterative procedure it follows that for any sequence of intervals $\{b_i\}_{i \in \NN}$ with $b_i \in \BBB_i$ the associated sequence $\{B_0, B_1, \ldots\}$ must be an $F$-chain satisfying $b_i \sub B_{i-1}$. 
In this way, for every point $\mathbf{x}$ in the Cantor set $\KKK$ we will establish the existence of an $F$-chain $\{B_0, B_1, \ldots\}$ for which $\{\mathbf{x}\}= \bigcap_{i}B_i$; namely, we may simply choose the $F$-chain associated with a sequence of intervals $\{b_i\}_{i \in \NN}$ with $b_i \in \BBB_i$ satisfying $\bigcap_{i \in \NN}b_i=\{\mathbf{x}\}$. It follows that $\mathbf x$ must fall within $E$ and, since $\mathbf{x}\in \KKK$ was arbitrary, that $\KKK \sub E$ as required.



We begin the first step of the inductive process. Given $\BBB_0=\{b_0\}$, the idea is to construct the collection $\BBB_1 \sub \frac 1 R \BBB_0$ in such a way that for every $b \in \BBB_1$ there exists a place Bob may play his first interval $B_0$ in a $(1/2, 2/R)$ game so that Alice's first interval $A_1:=F(B_0)$
(as specified by the winning strategy~$F$)
contains~$b$; that is, we construct $\BBB_1$ so that for every $b \in \BBB_1$ there exists an $F_1$-chain $\left\{ B_0 \right\}$ satisfying $b \sub A_1 := F(B_0)$. For technical reasons, in practice we will ensure the stronger condition that $b \sub (1-2/R)A_1$, where $\kappa B$ denotes the interval with the same centre as $B$ but with radius multiplied by $\kappa$.

Recall that for his opening move in a $(1/2, 2/R)$ game Bob may choose any interval in~$\RR$. In particular, he may choose any interval of radius $2 \cdot \rad(b_0)$ with centre lying in~$b_0$, therefore covering $b_0$. Let $\WWW_1(b_0) \sub \BBB(\RR)$ be the set of all ``winning'' intervals for Alice (as prescribed by the winning strategy~$F$) corresponding to any opening interval of this kind that Bob might choose to begin the game with; that is, let
\begin{equation*}
    \WWW_1(b_0): \: =  \: \left\{F(B): \: B \in \BBB(\RR),\: \rad(B) = 2\cdot\rad(b_0), \, \cent(B) \in b_0  \right\}.
\end{equation*}
Note that this set could either be uncountable or, since for each $A \in \WWW_1(b_0)$ we have  $\rad (A)=\alpha \cdot (2\cdot\rad(b_0)) = \rad(b_0)$, it could simply coincide with the singleton $\BBB_0$. 
Moreover, since $\alpha=1/2$ we must have for every interval $B$ that $\cent(B)$ is contained in $F(B)$, and so $\WWW_1(b_0)$ is a cover of $b_0$. This property is unique to the case $\alpha=1/2$.

\begin{lemma} \label{prop:cover1}
    Let $b \in \BBB(\RR)$ be any closed interval and let $\mathcal{W} \sub \BBB(\RR)$ be a cover of~$b$ by closed intervals of radius $\rad(b)$. Then, for any strictly positive $\eps < \rad(b)$ there exists a subset $\mathcal W^\ast(\eps) \sub \WWW$ of cardinality at most two that covers $b$ except for an open interval of length at most $\eps$.
\end{lemma}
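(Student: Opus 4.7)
The plan is to pick one ``left'' interval from $\mathcal{W}$ and one ``right'' interval, exploiting the key fact that every $W\in\mathcal{W}$ has length exactly $2\rad(b)$, the same as $b$. Write $b = [a,a+2r]$ with $r=\rad(b)$. If some $W\in\mathcal{W}$ contains both endpoints of $b$ then $W\supseteq b$ and I would simply take $\mathcal{W}^\ast(\eps)=\{W\}$. Otherwise each element of $\mathcal{W}$ contains at most one endpoint, so I would split $\mathcal{W}$ into the nonempty sub-families $\mathcal{W}_L$ and $\mathcal{W}_R$ consisting of those $W$ containing $a$ and $a+2r$ respectively; for $W\in\mathcal{W}_L$ denote its right endpoint by $e(W)$, and for $W\in\mathcal{W}_R$ denote its left endpoint by $l(W)$.

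The heart of the proof will be the inequality
\[
c \;:=\; \sup_{W\in\mathcal{W}_L} e(W) \;\geq\; \inf_{W\in\mathcal{W}_R} l(W) \;=:\; d.
\]
I would prove this by contradiction. If $c<d$, any point $x\in(c,d)\subset(a,a+2r)\subset b$ must be covered by some $W\in\mathcal{W}$; membership of $W$ in $\mathcal{W}_L$ or $\mathcal{W}_R$ would immediately contradict the definition of $c$ or $d$, while the only remaining case forces $W\subset(a,a+2r)$, which is impossible since $|W|=2r=|b|$.

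Given $c\geq d$, I would invoke the definitions of sup and inf to choose $W_L\in\mathcal{W}_L$ with $e(W_L)>c-\eps/2$ and $W_R\in\mathcal{W}_R$ with $l(W_R)<d+\eps/2$. Then $W_L$ covers $[a,e(W_L)]$ and $W_R$ covers $[l(W_R),a+2r]$, so the uncovered portion of $b$ is the single (possibly empty) open interval $(e(W_L),l(W_R))$, whose length is bounded by
\[
(d+\eps/2)-(c-\eps/2) \;\leq\; \eps.
\]
Setting $\mathcal{W}^\ast(\eps)=\{W_L,W_R\}$ yields the conclusion.

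The main subtlety I anticipate is the case in which the extremal values $c$ and $d$ are not attained within $\mathcal{W}$, which is precisely why the lemma permits the slack $\eps$; splitting this slack into two halves and approximating $c$ and $d$ from the appropriate sides absorbs the issue. Apart from this approximation argument and the small case analysis inside the key inequality, everything reduces to elementary one-dimensional interval arithmetic.
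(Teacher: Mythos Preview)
Your proof is correct. Both your argument and the paper's rest on the same key observation---that any interval of length $2\rad(b)$ meeting $b$ must contain one of the two endpoints of $b$---but the implementations differ. The paper discretises: it fixes $n$ with $\rad(b)/n\le\eps$, chooses evenly spaced points $x_0,\ldots,x_n$ across $b$, picks a covering interval $W_i\ni x_i$ for each, and lets $j$ be minimal with $x_n\in W_j$; then $W_{j-1}$ and $W_j$ cover $b$ except for the grid gap $(x_{j-1},x_j)$. Your approach instead works continuously, taking the supremum $c$ of right endpoints over $\mathcal W_L$ and the infimum $d$ of left endpoints over $\mathcal W_R$, proving $c\ge d$, and then approximating to within $\eps/2$ on each side. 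The paper's grid trick sidesteps the sup/inf approximation by building the $\eps$ slack into the discretisation from the start, while your argument is perhaps more transparent about \emph{why} the slack $\eps$ is needed (namely, that the extremal endpoints need not be attained). Both are equally elementary and equally short.
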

\begin{proof}[Proof of Lemma \ref{prop:cover1}]
Fix $n$ such that $n^{-1} \rad(b) \leq \eps$, and let $\cent(b) - \rad(b) = x_0,\ \ldots,\ x_n = \cent(b) + \rad(b)$ be an evenly spaced sequence of points starting and ending at the two endpoints of $b$. For each $i=0,\ldots,n$ let $W_i$ be an interval in $\WWW$ containing $x_i$. Now let $j$ be the smallest integer such that $x_n\in W_j$. If $j=0$, then $W_0 = b$ and we are done. Otherwise, since $x_n\notin W_{j-1}$, we have $x_0\in W_{j-1}$ and thus $[x_0,x_{j-1}] \sub W_{j-1}$, whereas $[x_j,x_n] \sub W_j$. Thus the intervals $W_{j-1}$ and $W_j$ cover $b$ except for the interval $(x_{j-1},x_j)$, which is of length at most $\eps$.
\end{proof}

We continue the proof of Theorem \ref{thmwinningmainN}. Take $b=b_0$, $\eps=\rad(b_0)/R$, and $\WWW= \WWW_1$ in Lemma~\ref{prop:cover1}. Then there exists a subcollection $\WWW_1^\ast:=\{A_1^L, A_1^R\}$ of $\WWW_1$ of cardinality at most two covering $b_0$ except for an open interval $I(b_0)$ of length at most $\rad(b)/R$. It is possible that $I(b_0)$ is empty. If $\WWW_1^\ast$ contains only one interval, say $\WWW_1^\ast = \{ b'\}$, then set $A_1^L=A_1^R=b'$. Let $B_0^L$ and $B_0^R$ be two choices for Bob's opening move with centres in $b_0$ that satisfy $A_1^L=F(B_0^L)$ and $A_1^R=F(B_0^R)$; such choices exist by the definition of $\WWW_1$. If we had $A_1^L=A_1^R$ then assume $B_0^L=B_0^R$.

We now discard the ``bad'' intervals that will not appear in the generalised Cantor set~$\KKK$. Let
$$\bad_1:= \left\{b' \in \frac{1}{R}\BBB_0: \, b'\cap I(b_0) \neq \emptyset\;\mbox{or}\;  b' \not\sub \left( (1-2/R)A_1^L \cup (1-2/R)A_1^R \right) \right\}. $$
Since any interval of length $\rad(b_0)/R$ may intersect at most two intervals from $\frac{1}{R}\BBB_0$, and there are at most five intervals (of length at most $\rad(b_0)/R$) that need to be intersected by $b'$ for it to be in $\bad_1$,  it is immediate that $\#\bad_1 \leq 10$. It follows that $\BBB_1:= \frac{1}{R}\BBB_0 \setminus \bad_1$ satisfies the required conditions for our generalised Cantor set. For each $b \in \BBB_1$ we associate one of the $F_1$-chains $\left\{ B_0^L \right\}$ or $\left\{ B_0^R \right\}$, depending on whether $b$ is a subset of $A_1^L$ or $A_1^R$ respectively. If $b$ lies in both $A_1^L$ and $A_1^R$ (which is possible if $I(b_0)$ was empty) then we will assign as a precedent whichever of the intervals $B_0^L$ or $B_0^R$ has the `leftmost' centre.  This completes the first step of the inductive construction process.

Assume now that we have constructed the collection $\BBB_i$ so that for each $b \in\BBB_i$ we have an $F_i$-chain $\left\{B_0, \ldots, B_{i-1} \right\}$ with $\rad(B_{i-1})=2R \cdot\rad(b)$ and $b \sub (1-2/R)F(B_{i-1})$. We outline the procedure to construct the subsequent collection $\BBB_{i+1}$ and the corresponding $F_{i+1}$-chains:

Fix $b \in \BBB_i$ with associated $F_i$-chain $\left\{B_0, \ldots, B_{i-1} \right\}$. Assume Alice has just played her $i$th move~$A_i:=F(B_{i-1})$, an interval of radius $R \cdot \rad(b)$, in a $(1/2, 2/R)$ game corresponding to this $F_i$-chain. Bob may then choose any interval of radius $2 \cdot \rad(b)$ inside $A_i$. Since $b \sub (1-2/R)A_i$, Bob is free to choose as $B_i$ any interval of radius  $2 \cdot \rad(b)$ centred in~$b$; every such choice $b'$ will constitute a legal move since $\rad(b')=\frac2R \rad(A_i)$ and $b \sub (1-2/R)A_i$ together imply $b' \sub A_i$. In particular, the collection
\begin{equation*}
\WWW_{i+1}(b): \: =  \: \left\{F(b'): \: b' \in \BBB(\RR),\: \rad(b') = 2\cdot\rad(b_0)/R^i, \, \cent(b') \in b  \right\}
\end{equation*}
is a cover of $b$ and consists of candidate ``winning'' moves that could be played by Alice after any legal move $B_i$ (with centre in $b$) played by Bob. As before, we apply Lemma~\ref{prop:cover1} to find a subset of $\WWW_{i+1}(b)$ of cardinality at most two that covers $b$ except for an open subinterval~$I(b)$ of length at most~$\rad(b)/R$. If the subcover contains two intervals denote them by $A_{i+1}^L(b)$ and $A_{i+1}^R(b)$; otherwise, if the subcover consists of a single ball $b'$ then let $A_{i+1}^L(b)=A_{i+1}^R(b)=b'$. Denote by $B_{i}^L(b)$ and $B_{i}^R(b)$ some choice of intervals centred in $b$ for which $A_{i+1}^L(b)=F(B_{i}^L(b))$ and $A_{i+1}^R(b)=F(B_{i}^R(b))$ respectively. Define
$$\bad_{i+1}(b):= \left\{b' \in \frac{1}{R}\left\{b\right\}: \, b'\cap I(b) \neq \emptyset\;\mbox{or}\;  b' \not\sub \left( (1-2/R)A_{i+1}^L(b) \cup (1-2/R)A_{i+1}^R(b) \right) \right\} $$
and let $\BBB_{i+1}(b):= \frac{1}{R}\left\{b\right\} \setminus \bad_{i+1}(b)$. By the same arguments as before it is clear that $\#\bad_{i+1}(b) \leq 10$ for each $b \in \BBB_i$.

Finally, for each $b' \in \BBB_{i+1}(b)$ we assign the $F_{i+1}$-chain $\left\{B_0, \ldots, B_{i-1}, B_{i}^L(b) \right\}$ or $\left\{B_0, \ldots, B_{i-1}, B_{i}^R(b_i) \right\}$ depending on whether $b'$ lies in  $A_{i+1}^L(b_i)$ or $A_{i+1}^R(b_i)$ respectively. Again, if $I(b)$ was empty and $b$ was contained in both $A_{i+1}^L(b)$ and $A_{i+1}^R(b)$ then we choose whichever of $B_{i}^L(b)$ and $B_{i}^R(b)$ has the leftmost centre as a convention.

Upon setting $$\BBB_{i+1}:= \bigcup_{b \in \BBB_i}\BBB_{i+1}(b)$$ we see that the conditions of our generalised Cantor set $\KKK(b_0,R,10)$ are satisfied. This completes the inductive procedure.

\subsection{Cantor rich and Cantor winning}\label{sec:rich}

In this section we prove the following statement which implies Theorem~\ref{thm:cantor_rich}: 


\begin{theorem}\label{cor:cantor-winning-implies-rich}
Let $(X,\SSS,U,f)$ be a splitting structure, let
$B_0\sub X$
be any ball, and let $\eps>0$ be a real number. If
$E\sub X$
is $\eps$ Cantor winning in $B_0$ then it is $\left(B_0,M\right)$ Cantor rich with
\[
M = 4^{\frac{1}{\eps}}.
\]
Conversely, assume that $X$ is doubling
and let $M\ge 4$ be a real number. If $E$ is $\left(B_0,M\right)$ Cantor rich then it is $\eps$ Cantor winning in $B_0$, for any $\eps$ for which there exists $R\in U$ such that
\begin{equation}\label{eq:fromMtoEpsilon}
M < f(R) \leq 4^{\frac{1}{\delta\eps}}.
\end{equation}
In particular, if $X=\bbr$ with the standard splitting structure, then $E$ is $M$ Cantor rich if and only if it is $\eps$ Cantor winning with
\[
\eps = \log_4 \left(\frac{1}{\lfloor M\rfloor +1}\right).
\]
\end{theorem}
\begin{proof}
Assume $E$ is $\eps$ Cantor winning in $B_0$. Then, by Corollary~\ref{corl:cantor} 
for every $2\leq R\in U$ there exists a 
$\left(B_{0},R,f(R)^{1-\eps}\right)$
Cantor set contained in $E$. Fix an integer 
$R$ such that $f(R)>M$ 
and a real number $y>0$. Since
$\frac{4}{f(R)^{\eps}}<1$, 
we can choose $\ell>0$ such that 
$\frac{4}{f(R)}\cdot \left(\frac{4}{f(R)^{\eps}}\right)^{\ell}<y$.
Consider a 
$\left(B_{0},R^{\ell},f(R)^{\ell(1-\eps)}\right)$ 
Cantor set contained in $E$ and denote its Cantor sequence by $\left\{ \calb_{n}\right\} _{n=0}^{\infty}$.
Define another Cantor sequence $\left\{ \calb_{n}'\right\} _{n=0}^{\infty}$
by setting $\calb'_{\ell k}=\calb_{k}$ for all $k\geq0$ and $\calb_{n}'=\frac{1}{R}\calb_{n-1}$
whenever $n\neq\ell k$. Then 
the limit set
$$
\bigcap_{n=1}^\infty \bigcup_{B\in \calb_{n}'} B
$$
is a $\left(B_{0},R,\left(r_{m,n}\right)_{0\leq m\leq n}\right)$
generalised Cantor set, with 
$r_{\ell\left(k-1\right), \ell k}=f(R)^{\ell(1-\eps)}$
for all $k\geq0$, and $r_{m,n}=0$ for all other pairs. Then, for
any $k>0$ and $n = \ell k$, we get:
\[
\sum_{m=0}^{\ell k}\left(\frac{4}{f(R)}\right)^{\ell k-m +1}r_{m,\ell k}=\left(\frac{4}{f(R)}\right)^{\ell+1}f(R)^{\ell(1-\eps)}=\frac{4}{f(R)}\cdot \left(\frac{4}{f(R)^{\eps}}\right)^{\ell}<y.
\]
For every $n\neq\ell k$, we have
\[
\sum_{m=0}^{n}\left(\frac{4}{f(R)}\right)^{n-m+1}r_{m,n}=0<y.
\]

For the other direction, suppose that $E$ is $\left(B_0,M\right)$ Cantor rich. Let $c_0 = \delta(1-\eps)$. By Theorem \ref{potentialequiv}, it is enough to show that $E$ is $c_0$ potential winning in $B_0$. We argue as in the proof of Theorem \ref{potentialequiv}. Choose $R$ such that $M < f(R) \leq 4^{1/\eps}$ and fix any $0 < \beta \leq  \frac{1}{R}$, $c > c_0$.
Fix a small enough $y>0$ whose precise value will be determined later. By the definition of Cantor rich, $E$ contains some $\left(B_0,R,\vr\right)$ Cantor set $\KKK$, where $\vr$ satisfies
\[
\sum_{m=0}^{n} \left(\frac{4}{f(R)}\right)^{n-m+1}r_{m,n} < y
\]
for every $n\in\bbn$. In particular, for every $m,n\in\bbn$ we have
\[
r_{m,n} < y\left(\frac{f(R)}{4}\right)^{n-m+1} \leq yR^{(n-m+1)c_0}.
\]
We now describe a strategy for Alice to win the $(c,\beta)$ potential game on
$A_\infty(B_0)$.
As in the proof of Theorem \ref{potentialequiv}, let $\rho$ denote the radius of $B_0$, let $D_k$ denote Bob's $k$th move, and for each
$k\in\NN$ let $m = m_k\in\NN$ denote an integer such that $\beta\cdot
\rad(D_k) < R^{-m}\rho \leq \rad(D_k)$. The inequality $\beta\le 1/R$ ensures that such $m$ exists.
Then Alice's strategy is as follows: on turn $k$ remove all elements of the set
\[
\bigcup_{B\in \BBB_m} \bigcup_{n\geq m} \AAA_{m,n}(B)
\]
that intersect Bob's current choice. 
This strategy, if executable, will make Alice win since
the intersection of Bob's balls will satisfy $\bigcap_{k \in \NN}D_k \sub\KKK \sub E$. 
To show that it is legal, we need to show that
\begin{equation}\label{eq:NTS}
\sum_{\substack{B\in \BBB_m \\ B\cap D_k \neq \smallemptyset}} \sum_{n \geq m} r_{m,n} \left(R^{-(n + 1)}\rho\right)^c \leq (\beta\cdot \rad(D_k))^c.
\end{equation}
This is enough because elements of $\AAA_{m,n}(B)$ all have radius $R^{-(n + 1)}\rho$.
Since the elements of~$\BBB_m$ are disjoint and have radius $R^{-m} \rho
\asymp_\beta \rad(D_k)$, the number of them that intersect~$D_k$ is bounded
by a constant depending only on $\beta$. Call this constant $C_1$. Then the
left-hand side of \eqref{eq:NTS} is less than
    \begin{align*}
        C_1 \sum_{n\geq m} yR^{(n - m + 1)c_0} \left(R^{-(n + 1)}\rho\right)^c
        &= C_1 \left(R^{-m} \rho\right)^c y\sum_{\ell = 1}^\infty R^{\left(c_0 - c\right)\ell}\\
        &\le C_1(\rad(D_k))^c\frac{R^{\left(c_0 - c\right)}}{1 - R^{\left(c_0 - c\right)}} y.
    \end{align*}
    By choosing $y$ so that
    \[
    y < \frac{\beta^c \left(1 - R^{\left(c_0 - c\right)}\right)}{C_1R^{\left(c_0 - c\right)}},
    \]
    we guarantee that the move is legal.
\end{proof}

\section{Intersection with fractals}\label{sec:intersections}

The concept of winning sets gives a notion of largeness which is
orthogonal to both category and measure. 
It was first noticed in
\cite{BFKRW} that absolute winning subsets of $\bbr^N$ are also
large in a dual sense: they intersect every nonempty diffuse set.
The Cantor and the potential games, together with the definition of
the Ahlfors regularity dimension, allow us to quantify this
observation. 
Based on the Borel determinacy property for
Schmidt games \cite{FLS} we also prove a partial converse:

\begin{theorem}\label{thm:potential_ahlfors}
If $E\sub X$ is $c_0$ potential winning then $E\cap
K\neq\emptyset$ for every closed set $K\sub X$ with $\dim_R K > c_0$. If $E$
is Borel then the converse holds.
\end{theorem}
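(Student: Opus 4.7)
The plan is to reduce to Theorem \ref{potentialschmidt} via a restriction argument. Given a closed $K\sub X$ with $\dim_{R}K>c_{0}$, I pick $c$ with $c_{0}<c<\dim_{R}K$ and a $c$-Ahlfors regular Borel measure $\mu$ whose support $Y\subseteq K$ I view as a $c$-Ahlfors regular metric space in its own right. The claim to verify is that $E\cap Y$ is $c_{0}$ potential winning on $Y$. Whenever Bob plays a ball $B^{Y}\subset Y$, lift it to the concentric ball $B$ in $X$ and have Alice respond using her $c_{0}$-potential winning strategy on $X$: she produces a countable collection $\{A_{k'}\}$ of balls in $X$ with $\sum\rad(A_{k'})^{c_{0}}\le(\beta\cdot\rad(B))^{c_{0}}$, and I replace each $A_{k'}$ by a closed $Y$-ball of the same radius (up to a universal factor) that covers $A_{k'}\cap Y$, discarding those with empty $Y$-intersection. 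A short case-check confirms that any Alice-win in the $X$-game (target, cover, or radii not shrinking) yields the corresponding win in the $Y$-game. Theorem \ref{potentialschmidt} applied in the Ahlfors regular space $Y$ then gives that $E\cap Y$ is $(\alpha,\beta)$ very strong winning on $Y$ for suitable $\alpha,\beta>0$; since the very strong game on a nonempty complete metric space produces an outcome point in the winning set, this yields $E\cap K\supseteq E\cap Y\neq\emptyset$.

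\textbf{Backward direction.} Under the Borel assumption, the plan is to argue contrapositively using Borel determinacy. Assume $E$ is Borel and not $c_{0}$ potential winning. The Borel determinacy result of \cite{FLS} for Schmidt games extends by a Martin-style argument to the $c_{0}$-potential game once one verifies that the Alice-winning condition (target, cover, or radii not shrinking) is Borel in the space of plays, which it is as soon as $E$ is Borel. Hence Bob has a winning strategy $\sigma_{B}$ forcing the outcome into $X\setminus E$ with no default loss for Alice. The goal is then to exhibit a closed set $K\subseteq X\setminus E$ with $\dim_{R}K>c_{0}$, contradicting the hypothesis.

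To construct $K$, fix $c_{0}<c$ and a large scale parameter $R$. At each stage of the game, Alice may spend her entire $c_{0}$-potential budget on a single ball of radius $\beta\cdot\rad(B_{k-1})$ placed wherever she wishes, and different placements steer $\sigma_{B}$ into different sub-balls $B_{k}$ (since Bob, following a winning strategy, must ensure the final outcome does not fall inside Alice's ball). By selecting at each stage $\gtrsim R^{c}$ well-separated Alice placements that force $\sigma_{B}$ into $\gtrsim R^{c}$ pairwise disjoint children, one builds a tree of game plays whose set of outcomes is a generalised $(B_{0},R,R^{c})$-type Cantor set inside $X\setminus E$, supporting a natural Ahlfors regular measure of dimension $\ge c>c_{0}$.

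The main obstacle, and the technical heart of the proof, is this last construction: verifying quantitatively that $\sigma_{B}$ admits $\gtrsim R^{c}$ pairwise disjoint responses to well-chosen Alice placements at every stage. The underlying idea is that Bob's strategy, to be winning, must be spatially sensitive to Alice's moves---otherwise Alice could concentrate her moves and trap the outcome inside her own balls---so the map from Alice placements to Bob responses fills up a positive proportion of $B_{k-1}$ at every scale. Formalising this, together with checking that the determinacy framework handles the default clauses of the potential game, constitutes the substantive work of the argument.
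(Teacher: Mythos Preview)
Your forward direction is essentially the paper's argument: restrict the potential game to a $c$-Ahlfors regular support $Y\subseteq K$ with $c>c_0$, observe that $E\cap Y$ is $c_0$ potential winning on $Y$, and invoke Theorem~\ref{potentialschmidt} to get a nonempty $(\alpha,\beta)$ winning set inside $E\cap Y$. This is correct.

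Your backward direction has the right skeleton---contrapositive, Borel determinacy to obtain a winning strategy $\sigma_B$ for Bob in some $(c,\beta)$ game with $c>c_0$, then build an Ahlfors regular Cantor set in $X\setminus E$ from the tree of Bob's responses---but the mechanism you propose for producing many \emph{disjoint} children at each level does not work as stated. You suggest having Alice spend her full budget on a single ball placed at various well-separated locations, arguing that Bob must avoid each. But ``Bob avoids $A^{(1)}$'' and ``Bob avoids $A^{(2)}$'' in two separate games do not force the two responses $B^{(1)},B^{(2)}$ to be disjoint from each other; nothing prevents $\sigma_B$ from returning the same ball (or overlapping balls) in response to many different Alice moves, as long as that ball misses the particular $A^{(j)}$ in each case. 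So the branching could collapse.

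The paper's proof supplies the missing idea: define the children \emph{recursively}. Having already found children $g_1,\ldots,g_{k-1}$ of $B_{i(n)}$, let Alice's move be the collection of slightly thickened copies $\NNN(g_j,\gamma^{n+1}\diam(B_0))$, $j<k$, and set $g_k$ to be Bob's next good-turn ball against this move. If $g_k$ met some $g_j$, then $g_k\subset\NNN(g_j,\gamma^{n+1}\diam(B_0))$, so Alice would win by default against $\sigma_B$, contradicting that $\sigma_B$ is winning for Bob. Thus disjointness is automatic. The budget constraint $\sum_{j<k}\rad(\NNN(g_j,\cdot))^c\le(\beta\cdot\rad(B_{i(n)}))^c$ then caps the number of children at $N\asymp(\beta^2/\gamma)^c$, and letting $\gamma\to 0$ gives Ahlfors regular sets in $X\setminus E$ of dimension arbitrarily close to $c>c_0$. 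This recursive ``remove what you have already found'' trick is the technical heart you flagged as missing, and your single-ball alternative does not substitute for it.
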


Combining this with Propositions \ref{propositiondiffusedimR} and \ref{thm:potential_implies_absolute} yields the following corollary: 

\begin{corol}
If $E\sub X$ is Borel and $E\cap K\neq\emptyset$ for any diffuse set $K$, then $E$ is absolute winning.
\end{corol}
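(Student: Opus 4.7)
The plan is to chain together the three results immediately preceding the Corollary. Specifically, I would show that the hypothesis forces $E$ to be $c_0$ potential winning for every $c_0 > 0$ (i.e. $0$ potential winning), and then invoke Proposition~\ref{thm:potential_implies_absolute} to conclude that $E$ is absolute winning. The standing assumption throughout is that $X$ is a complete doubling metric space, as required by Proposition~\ref{thm:potential_implies_absolute}.

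First I would fix an arbitrary $c_0 > 0$ and aim to verify that $E$ is $c_0$ potential winning. Since $E$ is Borel by hypothesis, the backward direction of Theorem~\ref{thm:potential_ahlfors} reduces this to checking the intersection property
\[
E \cap K \neq \emptyset \quad \text{for every closed } K \sub X \text{ with } \dim_R K > c_0.
\]
So fix such a closed set $K$. Because $\dim_R K > c_0 > 0$, Proposition~\ref{propositiondiffusedimR} furnishes a diffuse subset $K' \sub K$. The standing hypothesis on $E$ then gives $E \cap K' \neq \emptyset$, and since $K' \sub K$ this yields $E \cap K \neq \emptyset$, completing the verification that $E$ is $c_0$ potential winning.

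Since $c_0 > 0$ was arbitrary, $E$ is $c$ potential winning for every $c > 0$, i.e. $0$ potential winning in the terminology of Section~\ref{sec:potential}. Applying Proposition~\ref{thm:potential_implies_absolute} then shows that $E$ is absolute winning, as required.

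There is no real obstacle; the argument is essentially bookkeeping. The only point that deserves attention is that Proposition~\ref{propositiondiffusedimR} only guarantees that $K$ \emph{contains} a diffuse set rather than being diffuse itself, but since the hypothesis applies to \emph{every} diffuse set, this is sufficient. It is also worth noting that the doubling hypothesis on $X$ is essential, as it is used in Proposition~\ref{thm:potential_implies_absolute}; without it, $0$ potential winning need not imply absolute winning.
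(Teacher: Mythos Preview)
Your proof is correct and follows exactly the route the paper intends: apply the converse (Borel) direction of Theorem~\ref{thm:potential_ahlfors} for every $c_0>0$, use the second half of Proposition~\ref{propositiondiffusedimR} to pass from $\dim_R K>0$ to a diffuse subset, and finish with Proposition~\ref{thm:potential_implies_absolute}. Your explicit remark that $X$ must be complete doubling (needed for Proposition~\ref{thm:potential_implies_absolute}) is a useful clarification that the paper leaves implicit.
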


\begin{rem}
One can
check that Theorem~\ref{thm:intersection_with_fractal_rn} follows from Theorem~\ref{thm:potential_ahlfors} together with Theorem~\ref{potentialequiv}. Indeed, by Theorem~\ref{potentialequiv} the set $E\in \RR^N$ if $\eps$ Cantor winning if and only if it is $c_0$ potential winning for $c_0 = N(1-\eps)$. Then, Theorem~\ref{thm:intersection_with_fractal_rn} is a straightforward corollary of Theorem~\ref{thm:potential_ahlfors}.
\end{rem}

We will need the following observation regarding the potential game:

\begin{lemma}\label{lem:restriction}
Let $X$ be a complete metric space, $K\sub X$ a closed subset, and let $c>0$. If $E\sub X$ is $c$ potential winning in $X$ then $E\cap K$ is $c$ potential winning in $K$.
\end{lemma}

\begin{proof}
Assume $c'>c$ and $\beta>0$ and fix a winning strategy for Alice on $E$ for the $(c',\beta)$ potential game on $X$. Define a winning strategy for Alice on $E\cap K$ for the $(c',3\beta)$ potential game on $K$ as follows: Assume $i\geq0$ and that $B_i=B(x_i,r_i)\sub K$ with $x_i\in K$ is chosen by Bob in his $i$st move. Consider the ball with the same center and radius in $X$ and apply the winning strategy of Alice on $X$ to get a collection of balls $\cala_{i+1}$. For every $A\in\cala_i$ for which $A\cap K\neq\varnothing$ fix a point $z(A)\in A\cap K$. The triangle inequality implies that $A\cap K\sub B(z(A),3\rad(A))$ for every $A\in\cala_{i+1}$.
For the game in $K$ Alice will choose the collection of balls in $K$ defined by
\[
\cala'_{i+1} = \left\{B(z(A),3\rad(A)) \sep A\in \cala_{i+1} \text{ and } A\cap K\neq\varnothing \right\}.
\]
On one hand
\[
\sum_{A\in\cala'_{i+1}} \rad(A)^{c'} \leq 3^{c'}\sum_{A\in\cala_{i+1}} \rad(A)^{c'} < 3^{c'}\left(\beta\cdot\rad(B_i)\right)^{c'} = \left(3\beta\cdot\rad(B_i)\right)^{c'},
\]
and, therefore, this move is legal. On the other hand,
\[
\bigcup_{A\in\cala_{i+1}}A\cap K\sub\bigcup_{A\in\cala'_{i+1}}A,
\]
Since $K$ is closed, applying this strategy for every $i$ guarantees that, if $\rad(B_i)\to 0$ then either
\[
\bigcap_{i=0}^\infty B_i\sub E\cap K
\]
or
\[
\bigcap_{i=0}^\infty B_i\sub \bigcup_{i=1}^\infty\bigcup_{A\in\cala'_i}A.
\]
This shows that $E\cap K$ is $(c',3\beta)$ potential winning for every $c'>c$ and $\beta>0$, hence, it is $c$ potential winning.
\end{proof}

\begin{proof}[Proof of Theorem \ref{thm:potential_ahlfors}]

Assume that $E\sub X$ is $c_0$ potential winning, and let $K\sub X$ be a $c$ Ahlfors regular set with $c>c_0$. Lemma \ref{lem:restriction} implies that $E\cap K$ is winning for the $c_0$ potential game restricted to $K$. Therefore, by Theorem~\ref{theoremschmidtgamerelation}, $E\cap K$ contains an $(\alpha, \beta)$ winning set for certain pairs $(\alpha,\beta)$ and hence it is nonempty.

For the converse, assume $E$ is Borel, and that $E$ is not $c_0$ potential winning. By the Borel determinacy theorem~\cite[Theorem 3.1]{FLS},\footnote{In fact, Theorem~3.1 in~\cite{FLS} does not deal with potential games. However its proof can be easily modified to cover them. In the notation of that paper, it is clear from the definition of who wins the potential game that there exists a Borel set $A\sub Z\times X$ such that if $\omega\in E_\Gamma$ is a play of the game, then $\omega$ is a win for Alice if and only if either $\omega\notin Z$ or $(\omega,\iota(\omega)) \in B$. Combining with \cite[Lemma 3.3]{FLS} shows that the set of plays of the game that result in a win for Alice is Borel.} Bob has a winning strategy to make sure the $c_0$ potential game ends in $X\butnot E$. Fix this strategy and let $\beta > 0$ and $c > c_0$ be Bob's choices according to this strategy. We will show that $X\butnot E$ contains Ahlfors regular sets of dimension arbitrarily close to $c$.

Fix a small number $\gamma > 0$. Denote by $\AAA_{i+1}$ the collection of balls chosen by Alice in response to Bob's $i$th move $B_i$. We will consider strategies for Alice where she only chooses nonempty collections on some turns, namely $\AAA_{i+1}\neq \emptyset$ only if
\[
\rad(B_i) \leq \gamma^n \rad(B_0) < \rad(B_{i-1}) \text{ for some } n\in\NN.
\]
Such turns $i = i(n)$ will be called \emph{good turns}. If $B_0,\ldots,B_{i(n)}$ vs. $\AAA_1,\ldots,\AAA_{i(n)+1}$ is a history of the game, then we let $f(\AAA_{i(n)+1})$ denote the next ball that Bob plays on a good turn if the game continues with Bob playing his winning strategy and Alice playing dummy moves (i.e. choosing the empty collection), i.e. $f(\AAA_{i(n)+1}):= B_{i(n+1)}$.
Next, we let
\[
N = \left\lfloor \left(\frac{\beta^2}{3\gamma}\right)^c \right\rfloor.
\]
If $B_0,\ldots,B_{i(n)}$ vs. $\AAA_1,\ldots,\AAA_{i(n)}$ is a history of the game, then we define a sequence of balls $\left( g_k(B_{i(n)}) \right)_{k = 1}^N$ by recursion: we let $g_1(B_{i(n)}):= f(\emptyset)$, and if $g_1,\ldots,g_{k-1}$ are defined for some $k\geq 1$, then we let
\[
g_k( B_{i(n)} ) = f\big(\big\{\NNN(g_1(B_{i(n)}),\gamma^{n+1} \diam (B_0)),\ldots,\NNN(g_{k - 1}( B_{i(n)} ),\gamma^{n+1} \diam (B_0))\big\}\big).
\]
Here $\NNN(A,d)$ denotes the $d$-neighborhood of a set $A$, i.e. $\NNN(A,d):=\{x\in X\;:\; \vd(A,x)\le d\}$.
We will show later (using the definition of $N$) that it is legal for Alice to play the collection appearing in the right-hand side, hence the right-hand side is well-defined. Note that since $B_{i(n)}$ is the $n$th good turn in the history $B_0,\ldots,B_{i(n)}$ vs. $\AAA_1,\ldots,\AAA_{i(n)}$, for each $k=1,\ldots,N$, $g_k(B_{i(n)})$ is the $(n+1)$st good turn in its corresponding history. We call these balls the \emph{children} of $B_{i(n)}$. The children of $B_{i(n)}$ are disjoint, because if $g_j(B_{i(n)}) \cap g_k(B_{i(n)}) \neq \emptyset$ for some $j,k$ with $j < k$, then $g_k(B_{i(n)}) \sub \NNN(g_j(B_{i(n)}),\gamma^{n+1} \diam(B_0))$, and thus Alice wins by default in the game in which Bob played his winning strategy ending in move $g_k(B_{i(n)})$, a contradiction.

Now construct a Cantor set $\KKK$ as follows: let $\BBB_0 = \{B_0\}$, and if $\BBB_n$ is defined, then let $\BBB_{n+1}$ be the collection of children of elements of $\BBB_n$ according to the above construction. Note that $\BBB_n$ consists of moves for Bob that are the $n$th good turns with respect to their corresponding histories, and in particular $\beta\gamma^n\rad(B_0) < \rad(B) \leq \gamma^n \rad(B_0)$ for all $B\in \BBB_n$. Finally, let $\KKK = \bigcap_{n\in\NN} \bigcup_{B\in\BBB_n} B$. Any element of $\KKK$ is the outcome of some game in which Bob played his winning strategy, so $\KKK\subset X\butnot E$. Moreover, from the construction of $\KKK$ it is clear that $\KKK$ is Ahlfors regular of dimension $\frac{\log N}{-\log \gamma}$. From the definition of $N$, we see that this dimension tends to $c$ as $\gamma\to 0$.

To complete the proof, we need to show that it is legal for Alice to play the collection
\[
\big\{\NNN(g_1(B_{i(n)}),\gamma^{n+1} \diam (B_0)),\ldots,\NNN(g_{k - 1}( B_{i(n)} ),\gamma^{n+1} \diam (B_0))\big\}
\]
for any $k\leq N$. Indeed, the cost of this collection is
\begin{align*}
\sum_{j=1}^{k-1} \big(\rad(g_j(B_{i(n)})) + \gamma^{n+1} \diam(B_j)\big)^c
&\leq N (3\gamma^{n+1} \rad(B_0))^c \\
\leq (\beta^2 \gamma^n \rad(B_0))^c &\leq (\beta \cdot \rad(B_{i(n)}))^c.
\qedhere\end{align*}
\end{proof}


\begin{rem}
A similar characterisation is possible in the generality of $\calh$ potential games.
\end{rem}


In fact, potential winning sets not only have nonempty intersection with Ahlfors regular sets, but the intersection has full Hausdorff dimension. To prove this, let us first prove the following:

\begin{proposition}\label{propositionfirstturn}
    If $\HD(E) \leq c_0$, then $X\butnot E$ is
$c_0$ potential winning.
\end{proposition}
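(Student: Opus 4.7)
The idea is entirely straightforward: because $\HD(E)\leq c_0$, Alice can cover $E$ outright with a single countable collection of balls on her opening move, using the fact that the legality cost in the potential game is tuned to exactly Hausdorff $c$-measure. After this initial sweep she needs do nothing further.

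More concretely, suppose Bob opens the $c_0$ potential game by choosing parameters $c>c_0$ and $\beta>0$ together with an initial ball $B_0$. Since $c>c_0\ge \HD(E)$, the $c$-dimensional Hausdorff measure of $E$ vanishes, so there exists a countable cover $\{U_k\}$ of $E$ with
\[
\sum_k \diam(U_k)^c \;<\; \left(\beta\cdot \rad(B_0)\right)^c.
\]
For each nonempty $U_k$ pick a point $x_k\in U_k$ and let $A_{1,k}=B(x_k,\diam(U_k))$ (if $\diam(U_k)=0$, replace this by a ball of sufficiently small positive radius so that the total sum is still bounded by $(\beta\rad(B_0))^c$). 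Then $E\subseteq \bigcup_k A_{1,k}$ and $\sum_k \rad(A_{1,k})^c \leq (\beta\cdot \rad(B_0))^c$, so the collection $\{A_{1,k}\}$ is a legal first move for Alice. Alice's strategy is to play this collection on her first turn and the empty collection (which is trivially legal) on every subsequent turn.

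It remains to check that this strategy is winning. If $\rad(B_i)\not\to 0$ then Alice wins by default. Otherwise, by Cantor's intersection theorem in the complete metric space $X$, the intersection $\bigcap_i B_i$ consists of a single point $x$. If $x\in E$, then $x$ belongs to some $A_{1,k}$, so $\bigcap_i B_i \subseteq \bigcup_{i,k} A_{i,k}$ and Alice again wins by default; if $x\notin E$, then $\bigcap_i B_i \subseteq X\setminus E$ and Alice wins outright. Since this works for every choice of $c>c_0$ and $\beta>0$, the complement $X\setminus E$ is $c_0$ potential winning.

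There is no real obstacle here: the only very minor point is the passage from an arbitrary Hausdorff cover by sets to a cover by positive-radius balls, which is handled by the standard trick of centering a ball of radius $\diam(U_k)$ at a point of $U_k$, with a harmless adjustment for singletons.
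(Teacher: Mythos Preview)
Your proof is correct and follows essentially the same approach as the paper's own proof: Alice covers all of $E$ on her first move using a Hausdorff cover of total $c$-cost below $(\beta\cdot\rad(B_0))^c$, and does nothing thereafter. Your version is simply more explicit, carefully converting the cover by sets into a cover by positive-radius balls and spelling out why either default-win condition or inclusion in $X\setminus E$ must hold at the end.
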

\begin{proof}
    Fix $\beta > 0$, $c > c_0$, and
$r_0 > 0$. Since the $c$ dimensional Hausdorff measure of $E$ is zero,
there exists a cover $\CCC$ of $E$ such that
    \[
    \sum_{C\in\calc} \diam^c(C) \leq (\beta r_0)^c.
    \]
    Thus if Bob's first ball has radius
$r_0$, then Alice can legally remove the collection $\CCC$, thus deleting the entire set $E$ on her first turn.
\end{proof}

\begin{theorem}\label{propositionfullHD}
    If $c_0 < \delta=\dim_R X$, then every
$c_0$ potential winning set in $X$ has Hausdorff dimension $\delta$.
\end{theorem}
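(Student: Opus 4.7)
The plan is to establish the nontrivial inequality $\dim_H E \geq \delta$ by a contradiction argument that combines the intersection property of potential winning sets (Proposition~\ref{w2_potential}), the ``small sets are losable on the first turn'' observation (Proposition~\ref{propositionfirstturn}), and the nonempty intersection characterisation of Theorem~\ref{thm:potential_ahlfors}. The reverse inequality $\dim_H E \leq \delta$ is immediate in the natural setting where $X$ is Ahlfors $\delta$ regular, since then $\dim_H X = \delta$.

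Concretely, I would fix an arbitrary $c$ with $c_0 < c < \delta$ and aim to show $\dim_H E \geq c$; letting $c \nearrow \delta$ then yields $\dim_H E \geq \delta$. Suppose for contradiction that $\dim_H E < c$. Since $E$ is $c_0$ potential winning and $c > c_0$, the remark following the definition of $c_0$ potential winning (namely, $c_0$ potential winning $\iff$ $(c',\beta)$ potential winning for every $c' > c_0$ and $\beta > 0$) implies that $E$ is also $c$ potential winning. On the other hand, applying Proposition~\ref{propositionfirstturn} with the upper bound $\dim_H E \leq c$ gives that $X \setminus E$ is $c$ potential winning as well.

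Now invoke the intersection property, Proposition~\ref{w2_potential}, to deduce that $E \cap (X \setminus E) = \emptyset$ is $c$ potential winning. But Theorem~\ref{thm:potential_ahlfors}, applied to the closed set $K = X$ which satisfies $\dim_R K = \delta > c$, would then force $\emptyset \cap X \neq \emptyset$, which is absurd. This contradiction yields $\dim_H E \geq c$, as required.

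The only subtle point (not really an obstacle) is checking that the borderline case in Proposition~\ref{propositionfirstturn} is covered: we need $X \setminus E$ to be strictly $c$ potential winning when $\dim_H E$ may equal $c$. This is handled cleanly by applying the proposition with parameter $c_0' = \dim_H E < c$, so that the conclusion $X \setminus E$ is $\dim_H E$ potential winning upgrades to $c$ potential winning because $c > \dim_H E$. With that in hand, every invocation above is legal and the contradiction is immediate; no delicate Cantor-set construction is needed, in contrast to the direct argument used in Theorem~\ref{thm:potential_ahlfors}.
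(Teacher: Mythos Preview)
Your proof is correct and follows essentially the same route as the paper's: assume $\dim_H E < \delta$, apply Proposition~\ref{propositionfirstturn} to get that $X\setminus E$ is potential winning at a parameter below $\delta$, intersect via Proposition~\ref{w2_potential} to deduce that $\emptyset$ is $c$ potential winning for some $c<\delta$, and then derive a contradiction. The only difference is in the last step: you invoke Theorem~\ref{thm:potential_ahlfors} (with $K=X$) to conclude $\emptyset\neq\emptyset$, whereas the paper appeals directly to Theorem~\ref{theoremschmidtgamerelation} to conclude that $\emptyset$ is $(\alpha,\beta)$ winning for some $\alpha,\beta>0$. Since the forward direction of Theorem~\ref{thm:potential_ahlfors} is itself proved using Theorem~\ref{theoremschmidtgamerelation}, your route is a mild repackaging rather than a genuinely different argument.
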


\begin{proof}
    By contradiction, suppose that $E\sub X$ is
$c_0$ potential winning but $c_1 := \HD(E) < \delta$. Then by Proposition \ref{propositionfirstturn}, $X\butnot E$ is
$c_1$ potential winning, so by the intersection property~(W2) (see Proposition~\ref{w2_potential}), the empty set $\emptyset$ is
$c_2$ potential winning, where $c_2 = \max(c_0,c_1) < \delta$. But then by Theorem \ref{theoremschmidtgamerelation}, the empty set $\emptyset$ is $(\alpha,\beta)$ winning for some $\alpha,\beta > 0$, which is a contradiction.
\end{proof}

Theorem \ref{propositionfullHD} may be seen as a quantitative version of the full dimension intersection property of absolute winning sets \cite[Corollary 5.4]{BFKRW}.

\begin{rem}
 In view of Theorem~\ref{potentialequiv}, if a complete doubling metric space
 admits a splitting structure $(X,\SSS,U,f)$ such that $B=A_\infty(B)$ for
 every ball $B$, then the notions of Cantor winning and potential winning sets
 are equivalent. However, not every metric space $X$ admits such a
 splitting structure. Indeed, spaces with splitting structures satisfying (S4)
 must have dimensions of the form $\frac{\log n}{\log m}$ ($m,n\in\NN$)
 (cf. \cite[Theorem 3]{BadziahinHarrap}), whereas one can construct a complete
 doubling metric space of arbitrary Hausdorff dimension. In that sense, the
 notion of potential winning sets is strictly more general than the notion of Cantor winning
 sets.
\end{rem}

\begin{rem}\label{rem:emptysetIsWinning}
In the above proof, we needed to be careful due to the fact that
$c_0$ potential winning sets are not automatically nonempty (as the winning sets for Schmidt's game are); indeed, by Proposition \ref{propositionfirstturn}, the empty set is
$\HD(X)$ potential winning.
\end{rem}

Lastly, the same observation that is used to prove Proposition \ref{propositionfirstturn} may be used to prove Theorem \ref{thm:dolgopyat}:

\begin{proof}[Proof of Theorem \ref{thm:dolgopyat}]
Recall that every closed ball in $X$ of radius $\leq 1$ is of the form
\[
B(x,r)=\{\left(y_k\right)_{k=1}^\infty \sep y_k=x_k \textrm{ for any } 1\leq k\leq n \}
\]
whenever $x\in X$ and $2^{-n}\leq r < 2^{-n+1}$.
Note that $X$ is 1 Ahlfors regular so it is enough to prove that the exceptional set defined in \eqref{eq:exceptionalSet}, i.e.
\[
    E=\left\{x\in X\sep \overline{\{T^i x\sep i\geq0\}}\cap K=\emptyset \right\},
\]
is $\dim_H K$ potential winning. We now define a winning strategy for Alice. Assume Bob choose $0<\beta<1$ and $c > \dim_H K$. Without loss of generality assume that $B_0=X$ and $r_0=1$. Then there exists a unique positive integer $\ell$ such that
\[
2^{-\ell}\leq \beta < 2^{-\ell+1}.
\]
Since $\dim_H K < c$, we may choose a collection of open balls $\calc$ such that $K\sub\bigcup_{C\in\calc}C$ and
\begin{equation}\label{eq:dolgopyatCover}
\sum_{C\in\calc}\rad(C)^c < \ell^{-1} (2^{-2\ell}\beta)^c.
\end{equation}
Let $\{B_i\}_{i=0}^\infty$ denote any sequence of balls that are chosen by Bob which satisfy $2^{-(i+2)\ell}\leq \rad(B_i) < 2^{-(i+1)\ell}$. If this sequence is not infinite then Alice wins by default. By abuse of notation, we will think of $B_i$ as Bob's $i$th choice, and it will be sufficient to define Alice's reaction to these moves. On her $(i+1)$st move, Alice will remove the collection 
\begin{equation}\label{eq:DolgopyatStrategy}
\cala_{i+1}= \bigcup_{j=0}^{\ell-1} \left\{C \sep C\in T^{-(i\ell+j)}\calc \textrm{ and } C\cap B_i \neq \emptyset\right\}.
\end{equation}
Here, $T^{-j} \calc$ denotes the set of all images of elements of $\calc$ under the inverse branches of $T^j$. 

Note that if Alice still doesn't win by default, i.e. if \eqref{eq:potentialDefault} is not satisfied, then necessarily
\[
\bigcap_{i=0}^\infty B_i\sub \left\{x\in X\sep \{T^i x\sep i\geq0\}\sub X\setminus \bigcup_{C\in\calc}C \right\} \sub E.
\]
Therefore, we are done once we show that the collection (\ref{eq:DolgopyatStrategy}) is a legal move for Alice, i.e., that it satisfies \eqref{eq:potentialLegal}. Firstly, for any $C\in\calc$ and $j = 0,\ldots,\ell-1$, we have that $T^{-(i\ell+j)}(\{C\})$ is a collection of balls of radius $2^{-(i\ell+j)}\rad(C)$, pairwise separated by distances of at least $2^{-(i\ell+j)} \geq 2^{-i\ell - \ell}$. Since $\rad(B_i) < 2^{-i\ell-\ell}$, at most one of them intersects $B_i$. Therefore,
\[
\sum_{C\in\cala_{i+1}} \rad\left(C\right)^c \leq \sum_{j=0}^{\ell-1} \sum_{C\in\calc}\left(2^{-(i\ell+j)}\cdot\rad(C)\right)^c \leq \ell 2^{2c\ell} \sum_{C\in\calc}\rad(C)^c\cdot \rad(B_i)^c,
\]
which is smaller than $\left(\beta\cdot\rad(B_i)\right)^c$ by \eqref{eq:dolgopyatCover}.
\end{proof}

\section{Counterexamples}\label{counters}

The original papers of Schmidt \cite[Theorem 5]{Schmidt1} and McMullen \cite{McMullen_absolute_winning} provide examples of sets that are winning (respectively, strong winning) but not absolute winning.
%
%
%
We provide two additional counterexamples.

\subsection{Cantor winning but not winning}

%
%

We give 
an explicit example of a set which is potential winning in $\RR$ (or equivalently, by Theorem \ref{potentialequivR}, is Cantor winning), but is not winning. 
In fact,  we will prove more by showing that the set in question is not weak winning (see \S\ref{subsectionstrongweak}).



\begin{proof}[Proof of Theorem \ref{theoremPWnotW}]
By an \emph{iterated function system} on $\RR$, or \emph{IFS}, we mean a finite collection $\{f_i\;:\RR\to\RR\}$, $i = 1,2,\ldots, N,$ of contracting similarities. By the \emph{limit set} of the IFS we mean the unique compact set $S$ which is equal to the union of its images $f_i(S)$ under the elements of the IFS. We call an IFS on $\R$ \emph{rational} if its elements all preserve the set of rationals. Note that there are only countably many rational IFSes.

  Let $E \sub \RR$ be the complement of the union of the limit sets of all rational IFSes whose limit sets have Hausdorff dimension
    $\leq 1/2$. We will show that $E$ is not winning for Schmidt's game. On the other hand, since $\dim(\RR \setminus E) = 1/2 < 1$, it follows from Proposition \ref{propositionfirstturn} that $E$ is $1/2$ potential winning. 

    Fix $\alpha > 0$, and let $\beta > 0$ be a small number, to be determined later. We will show that Alice cannot win the weak
    $(\alpha, \beta)$ game. 
    Let $I$ be a rational
    interval contained in Alice's first move, whose length is at least half of the length of Alice's first move. Without
    loss of generality, suppose that $I = [-1, 1]$. Let $\lambda < 1$ be a rational number large enough so that $\lambda + \lambda\alpha \geq 1$, and let  $\gamma > 0$ be a rational number small enough so that
    \begin{equation}
    \label{gammaalpha}\lambda^{1/2} + 2\gamma^{1/2} \leq 1.
    \end{equation}
    Consider the IFS on the interval $I$ consisting of the following three contractions:
    $$u_0(x) = \lambda x, \quad u_1(x) = \gamma(x - 1) + 1, \quad u_{-1}(x) = \gamma(x + 1) - 1,$$
and note that condition (\ref{gammaalpha}) guarantees that the dimension of the limit set of this IFS is at most $1/2$ (see , for example,~\cite[Section~5.3, Theorem~(1)]{Hutchinson}
). Bob's strategy is as follows: on any given turn $k$, if Alice just made the move $A_k$, then find the largest interval $J_k \sub A_k$ which can be written as the image of $I$ under a composition of elements of the IFS,
and select the unique ball $B_k$ of radius $\beta\cdot\rad(A_{k})$ whose centre is the midpoint of this interval. We will show by induction that $B_k \sub J_k$, thus proving that this choice is legal. Indeed, suppose that this holds for $k$. By ``blowing up the picture'' using inverse images of the elements of the IFS, we may without loss of generality suppose that $J_k = I$. Then we have $B_k = [-r, r]$, where $r = \rad(B_k) \leq 1$. By ``blowing up the picture'' further using an iterate of the inverse image of the contraction $u_0$, we may assume without loss of generality that $r > \lambda$. Then Alice's next choice $A_{k+1}$ is an interval of length at least $2\lambda\alpha$ contained in $[-1, 1]$. But the set $\left\{\pm \lambda^q: \, q = 1, 2, \ldots, N \right\}$ intersects every interval of length $1 - \lambda$ contained
in $[-1, 1]$, where $N$ is a large constant. Since $\lambda \alpha \geq 1-\lambda$ by assumption, there exist $q \in \{1, \ldots, N\}$ and $\epsilon\in\{-1,1\}$ such that $\epsilon \lambda^q \in \frac12 A_{k+1}$. But then $J = u^q_0u_{\epsilon}(I)$ is an interval coming from a cylinder in the IFS construction which intersects $\frac12 A_{k+1}$, and whose length is $2\gamma\lambda^q \in [2\lambda^N\gamma, 2\lambda\gamma ]$. By choosing $\gamma \leq \frac14\alpha$, we can guarantee that the length of $J$ satisfies $2\rad(J) \leq \frac12\lambda\alpha \leq \frac12 \rad(A_{k+1})$. Combining with the fact that $J\cap \frac12 A_{k+1} \neq \emptyset$ shows that $J\sub A_{k+1}$. So then by the definition of $J_{k+1}$, we have $2\rad(J_{k+1}) \geq 2\rad(J) \geq 2\lambda^N\gamma$, and thus by letting $\beta = \lambda^N\gamma$ we get $B_{k+1} \sub J_{k+1}$, completing the induction step.
\end{proof}

\subsection{Winning but not Cantor winning}

Finally, we will show that there exists a set in $\RR$ which is winning, but is not potential winning (or equivalently, by Theorem \ref{potentialequivR}, Cantor winning). 

\begin{rem}
The above statement remains true if ``winning'' is replaced by ``weak winning'', but the resulting statement is weaker and so we prove the original (stronger) statement instead. We will however need the weak $(\alpha,\beta)$ game in the proof. It appears that this is a nontrivial application of the weak game.
\end{rem}
By Corollary \ref{theoremPWWintersection}, it suffices to show that there exists a winning set $E$ whose complement is weak winning. Note that this also shows that the weak game does not have the intersection property (W2), since $E$ and its complement are both weak winning, but their intersection is empty, and the empty set is not weak winning. We will use the result below as a substitute for the intersection property.

    \begin{defn}
        A set $E$ is \emph{finitely weak winning} if Alice can play the weak game so as to guarantee that after finitely many moves her ball $A_n$ is a subset of $E$.
    \end{defn}
    \begin{lemma}\label{lem1}
The intersection of countably many finitely weak $\alpha$ winning sets is weak $\alpha$ winning.
    \end{lemma}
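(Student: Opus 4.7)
The plan is to mimic the classical intersection argument for Schmidt's game, but with the "reach $E$ in finitely many moves" condition playing the role that "eventually end up inside $E$" plays for ordinary winning. Fix $\alpha\in(0,1)$ and a countable family $\{E_k\}_{k\in\NN}$ of finitely weak $\alpha$ winning sets. I need to show that Alice has a winning strategy in the weak $\alpha$ game for $\bigcap_k E_k$. Let Bob choose $\beta\in(0,1)$ together with his initial ball $B_0$; since each $E_k$ is finitely weak $\alpha$ winning, Alice has, for this value of $\beta$, a strategy $F_k$ with the property that no matter how Bob plays, after some finite number of moves (depending on the play so far) her ball will be a subset of $E_k$.

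Alice's strategy is to use $F_1, F_2, \ldots$ in succession, restarting each time she has secured the current target. First I would have her play according to $F_1$ from move $1$ onwards; by the finitely weak winning property, there is some (random, history-dependent) index $n_1\in\NN$ at which $A_{n_1}\subseteq E_1$. Bob then replies with some ball $B_{n_1}\subseteq A_{n_1}$ satisfying $\rad(B_{n_1})=\beta\cdot\rad(A_{n_1})$. Alice now pretends that a brand new weak $(\alpha,\beta)$ game has just begun with initial ball $B_{n_1}$ (there is no constraint on the size of the initial ball in the weak game, so this is legitimate), and plays according to $F_2$ for this subgame. After finitely many further moves, say at step $n_2>n_1$, her ball satisfies $A_{n_2}\subseteq E_2$; and because all moves from step $n_1+1$ onward lie inside $B_{n_1}\subseteq A_{n_1}\subseteq E_1$, we automatically have $A_{n_2}\subseteq E_1\cap E_2$. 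Iterating in the obvious way produces an infinite increasing sequence $n_1<n_2<\cdots$ of stopping times with
\[
A_{n_k}\subseteq \bigcap_{j\le k} E_j \quad\text{for every }k\in\NN.
\]

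To verify the strategy wins, observe that Alice's moves are legal throughout: each stitched subgame is itself a legitimate weak $(\alpha,\beta)$ game in which Alice obeys $\rad(A_{i+1})\ge\alpha\cdot\rad(B_i)$ and Bob obeys $\rad(B_{i+1})=\beta\cdot\rad(A_{i+1})$, so nothing about the global game changes. Moreover, the radii of Bob's balls contract by a factor of at least $\alpha\beta$ per round, so $\rad(B_i)\to 0$ and $\bigcap_{i\ge 0}B_i$ is a single point $x$. Since the nested sequence $\{B_i\}$ passes through each ball $A_{n_k}$, we have $x\in A_{n_k}\subseteq \bigcap_{j\le k}E_j$ for every $k$, and therefore $x\in\bigcap_{k\in\NN}E_k$, as required.

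The only subtlety, which I view as the main "obstacle" to write cleanly, is that the stopping time $n_k$ depends on Bob's (adversarial) play during the $k$-th subgame, so one must make it explicit that Alice's full strategy is well-defined: at each actual move she consults the strategy $F_k$ corresponding to whichever stage she is currently in, and this choice is determined by the history of the game. Once that bookkeeping is made clear, the rest is the straightforward nesting argument above.
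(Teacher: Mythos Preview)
Your proof is correct and follows essentially the same concatenation argument as the paper: play the strategy for $E_1$ until some $A_{n_1}\subseteq E_1$, restart with Bob's reply as the new initial ball and play for $E_2$, and so on, so that the outcome lies in every $E_k$. One minor slip: the radii contract by a factor of at most $\beta$ per round (since $A_{i+1}\subseteq B_i$ forces $\rad(A_{i+1})\le\rad(B_i)$, hence $\rad(B_{i+1})=\beta\,\rad(A_{i+1})\le\beta\,\rad(B_i)$), not ``at least $\alpha\beta$,'' but this does not affect the conclusion that $\rad(B_i)\to 0$.
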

    \begin{proof}
     Let $(E_n)_{n\in\NN}$ be a countable collection of finitely weak $\alpha$ winning sets. Alice can use the following strategy to ensure that $\bigcap_{m=0}^\infty B_m$ lies inside $\bigcap_{n=0}^\infty E_n$. She starts by following the strategy to ensure that $A_{m_1} \subset E_1$. After that she starts playing the weak winning game for the set $E_2$, assuming that Bob's initial move is $B_{m_1}$. Therefore she can ensure that $A_{m_1+m_2} \subset E_1\cap E_2$. Then she switches her strategy to $E_3$ and so on.

Finally, we have that  for any $N$
$$
\bigcap_{m=1}^\infty A_m \subset A_{\sum_{k=1}^N m_k}\subset \bigcap_{n=1}^N E_n.
$$
By letting $N$ tend to infinity we prove the lemma.
    \end{proof}
In what follows, the intersection of countably many finitely weak $\alpha$-winning sets will be called $\sigma$ finitely weak $\alpha$-winning. 
    \begin{proof}[Proof of Theorem \ref{theoremWnotPW}]
    Throughout this proof we let $Z = \{0,1\}$, and we let $\pi$ denote the coding map for the binary expansion, so that $\pi:Z^\NN\to [0,1]$.

    \begin{defn}
        A \emph{Bohr set} is a set of the form
        \[
        A(\gamma,\delta):= \{n \in \NN : n\gamma \in (-\delta,\delta) \text{ mod 1}\},
        \]
        where $\gamma,\delta > 0$.

        For each $\omega = (\omega_n)_{n\in\NN}\in Z^\NN$, we let $E(\omega) = \{n\in\NN : \omega_n = 1\}$.
    \end{defn}

    We now define the set $E \sub \RR$:
    \begin{align*}
        E_0: &= \ZZ + \{\pi(\omega) : \omega\in Z^\NN,\; E(\omega)\text{ contains a Bohr set}\}\\
        E: &= \bigcup_{r\in \QQ} r E_0.
    \end{align*}
    We claim that $E$ is $1/3$ winning, but that its complement is weak $1/3$ winning.

    Fix $1/4<\alpha < 1/3$ and $\beta,\rho_0 > 0$. We show that Alice has a strategy to win the $(\alpha,\beta)$ Schmidt game assuming that Bob has chosen a ball of radius $\rho_0$ with his first move. Choose $0 < r\in \QQ$ so that $3 < 2\rho_0/r < \alpha^{-1}$, and consider the set
    \[
    A:= \{n\in\ZZ : \exists m = m_n\in\NN \text{ such that }(\alpha\beta)^m \rho_0/r \in (3\cdot 2^{-(n + 1)},\alpha^{-1}\cdot 2^{-(n + 1)})\}.
    \]
The bounds on $\alpha$ guarantee that the values $m_n$ for all $n\in A$ are distinct. By taking logarithms and rearranging the expression for the set $A$, we get
\begin{align*}
n\in A \quad&\Leftrightarrow\quad \exists m \;\; \log 3 <(n+1)\log 2 + m\log(\alpha\beta) + \log\left(\rho_0/r\right) < \log \left(\alpha^{-1}\right)\\
&\Leftrightarrow\quad n \frac{\log 2}{-\log(\alpha\beta)} \in \left( \frac{\log \left(\frac{3}{2\rho_0/r}\right)}{-\log(\alpha\beta)} , \frac{\log \left(\frac{\alpha^{-1}}{2\rho_0/r}\right)}{-\log(\alpha\beta)} \right) \text{ mod 1}.
\end{align*}
Since the interval on the right-hand side contains 0, this implies that $A$ contains a Bohr set. 

Now fix $n\in A$, and let $m = m_n \in\NN$ be the corresponding value. On the $m$th turn, Bob's interval $B_m$ is of length $2(\alpha\beta)^m \rho_0 \geq 3\cdot 2^{-n} r$, so if we subdivide $\RR$ into intervals of length $2^{-n} r$, then $B_m$ must contain at least two of them. Since Alice's next interval is of length $2\alpha (\alpha\beta)^m \rho_0 \leq 2^{-n} r$, this means that Alice can choose a subinterval of either one of these intervals on her turn. So Alice can control the $n$th binary digit of $x/r$, where $x$ is the outcome of the game, and in particular she can set the $n$th digit to $1$. Since every value $m$ corresponds to only one value $n\in A$, Alice can set all of the digits of $x/r$ whose indices are in $A$ to $1$. This means that $x/r\in E_0$ and thus $x\in E$, so Alice can force the outcome of the game to lie in $E$. So by Proposition~\ref{lemmacontinuity}, $E$ is $1/3$ winning.

    Now if we show that $\RR\butnot E_0$ is $\sigma$ finitely weak $1/3$ winning, then by symmetry $\RR\butnot rE_0$ is $\sigma$ finitely weak $1/3$ winning for all $r > 0$, and since the intersection of countably many $\sigma$ finitely weak $1/3$ winning sets is $\sigma$ finitely weak $1/3$ winning, it follows that $\RR\butnot E$ is $\sigma$ finitely weak $1/3$ winning. By symmetry it suffices to consider the case $r=1$. To prove that $\RR\butnot E_0$ is indeed $\sigma$ finitely weak $1/3$ winning,  we will use the following lemma: 
    \begin{lemma}\label{lem2}
        Fix $0 < \alpha < 1/3$ and $\beta > 0$. Then there exists $N\in\NN$ such that for any $\rho_0 > 0$ there exists $a_1(\rho_0)$ so that for any sequence of integers $(a_k)_{1\le k\le M}$ with
        \begin{equation}
            \label{aksep}
            a_{k + 1} - a_k \geq N \;\; \forall k,\quad \alpha_1\ge \alpha_1(\rho_0),
        \end{equation}
        Alice has a strategy in the finite weak $(\alpha,\beta)$ game to control the digits indexed by the integers $(a_k)_{1\le k\le M}$.
    \end{lemma}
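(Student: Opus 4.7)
The strategy is for Alice to identify an increasing sequence of turns $m_1<\cdots<m_M$ at which she will pin down one requested digit per turn. First I would isolate the single-digit subtask. To force the $a$-th binary digit of the outcome $x$ to equal some prescribed value $\sigma\in\{0,1\}$, Alice needs at some turn $m$ to place $A_m$ inside a dyadic interval of length $2^{-a-1}$ of the appropriate parity (``digit 0'' or ``digit 1''). For her smallest legal ball (of radius $\alpha\rad(B_{m-1})$) to fit inside such an interval I need $\rad(B_{m-1})\le 2^{-a-2}/\alpha$; for $B_{m-1}$ to actually contain a full such interval I need $\rad(B_{m-1})\ge 3\cdot 2^{-a-2}$. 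The hypothesis $\alpha<1/3$ is exactly what guarantees the control window $W(a):=[3\cdot 2^{-a-2},\,2^{-a-2}/\alpha]$ is nonempty.

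The next step is to use the flexibility of Alice's radius choice in the weak game to steer $\rad(B_i)$ through successive control windows $W(a_1),\ldots,W(a_M)$. Per round, the shrinkage factor $\rad(B_i)/\rad(B_{i-1})=\beta\cdot\rad(A_i)/\rad(B_{i-1})$ depends continuously on Alice's move and ranges over $[\alpha\beta,\beta]$, so composing $N'$ consecutive rounds realises, via the intermediate value theorem, any total shrinkage ratio in $[(\alpha\beta)^{N'},\beta^{N'}]$. To pass from some $\rho_k\in W(a_k)$ to some $\rho_{k+1}\in W(a_{k+1})$ in $N'$ rounds it suffices that $(\alpha\beta)^{N'}\le \rho_{k+1}/\rho_k\le\beta^{N'}$; a short computation shows an integer $N'\ge 1$ lies in the feasible range whenever $a_{k+1}-a_k\ge N$, where one may take
\[
N\ :=\ \left\lceil\frac{\log(1/\beta)\,\log(1/(\alpha\beta))}{(\log 2)\,\log(1/\alpha)}\right\rceil+1.
\]
A parallel computation for the initial phase produces a threshold $a_1(\rho_0)$, of the form $\log_2(1/\rho_0)+O_{\alpha,\beta}(1)$, ensuring $W(a_1)$ is reachable from $\rho_0$ in a positive integer number of rounds.

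Putting the pieces together, Alice precomputes turns $m_1<\cdots<m_M$ and target radii $\rho_k\in W(a_k)$ satisfying the compatibility conditions of the previous paragraph, plays interpolating moves between control turns so that $\rad(B_{m_k-1})=\rho_k$, and at each control turn $m_k$ places $A_{m_k}$ inside a digit-$\sigma_k$ dyadic interval of length $2^{-a_k-1}$ contained in $B_{m_k-1}$. By nestedness $A_{m_M}$ is contained in every controlled dyadic interval simultaneously, so every point of $A_{m_M}$ has the prescribed binary digit at each position $a_k$; this is a finite weak winning strategy for the intersection of the corresponding digit-constraint cylinders. The main technical obstacle I anticipate is verifying the interpolation step round by round, i.e. that a legal sequence of Alice's radius choices can realise any exact prescribed value of $\rad(B_{m_k-1})$ in $W(a_k)$; this reduces to continuity of the per-round shrinkage factor as a function of Alice's radius plus an application of the intermediate value theorem to the composed shrinkage over $N'$ rounds, and is essentially the only place where more than bookkeeping is needed.
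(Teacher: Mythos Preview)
Your proposal is correct and follows essentially the same approach as the paper's proof: exploit the weak game's flexibility in Alice's radius choice to steer Bob's radii through ``control windows'' where a single binary digit can be fixed, with the hypothesis $\alpha<1/3$ ensuring such windows are nonempty. The paper packages the interpolation step slightly more economically---it observes that once $\alpha^n<\beta$ the intervals $[(\alpha\beta)^n\rho,\beta^n\rho]$ overlap for consecutive $n$, so Alice can hit \emph{any} prescribed radius below a fixed threshold $\gamma\rho$, then simply targets $3\cdot 2^{-a_k}$ at each stage---whereas you work with a fixed round count $N'$ and invoke the intermediate value theorem explicitly; but this is a difference in bookkeeping rather than substance.
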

    \begin{proof}[Proof of Lemma \ref{lem2}]
        Suppose that Bob has just played a move of radius $\rho$. Then Alice can play any radius between $\alpha\rho$ and $\rho$, forcing Bob to play a predetermined radius between $\alpha\beta\rho$ and $\beta\rho$. After iterating $n$ times, Alice can force Bob to play a ball of any given radius between $\alpha^n \beta^n \rho$ and $\beta^n\rho$. Since $\alpha^n < \beta$ for sufficiently large $n$, there exists $\gamma > 0$ such that Alice can force Bob to play a ball of any prescribed radius $\leq \gamma \rho$ (after a sufficient number of turns depending on the radius). Now Alice uses the following strategy: for each $k = 1,\ldots,M$, first force Bob to play a ball whose radius is $3 \cdot 2^{-a_k}$, and then respond to Bob's choice in a way such that the $a_k$th digit of the outcome is guaranteed (this is possible by the argument from the previous proof). Forcing Bob to play these radii is possible as long as $3\cdot 2^{-a_1} \leq \gamma \rho_0$ and $3\cdot 2^{-a_{k+1}} \leq \gamma \alpha 3 \cdot 2^{-a_k}$ for all $k$. By \eqref{aksep}, the latter statement is true as long as $N$ and $a_1(\rho_0)$ are sufficiently large.
\end{proof}

    To continue the proof of Theorem \ref{theoremWnotPW}, fix $0 < \alpha < 1/3$ and $\beta > 0$, and let $N\in\NN$ be as in Lemma~\ref{lem2}. Then:
    \begin{itemize}
        \item For all $k\in\NN$, the set
        \[
        A_k:= \ZZ + \{\pi(\omega) : T(\omega) \text{ contains an arithmetic progression of length $k$ and gap size $N$}\}
        \]
        is finitely weak winning, where $T(\omega) := \NN\butnot E(\omega) = \{n\in\NN: \omega_n=0\}$.
        \item For all $q,i\in\NN$, the set
        \[
        A_{q,i} := \ZZ + \{\pi(\omega) : T(\omega) \cap (q\NN + i) \neq \emptyset\}
        \]
        is finitely weak winning.
    \end{itemize}
    So the intersection
    \[
    A = \bigcap_k A_k \cap \bigcap_{q,i} A_{q,i}
    \]
    is $\sigma$ finitely weak winning. To finish the proof, we need to show that $\RR\butnot E_0 \subp A$. Indeed, fix $n + \pi(\omega) \in A$. Then $T(\omega)$ contains arbitrarily long arithmetic progressions of gap size $N$, and intersects every infinite arithmetic progression nontrivially.

    Now let $A(\gamma,\delta)$ be a Bohr set, and we will show that $A(\gamma,\delta)$ intersects $T(\omega)$ nontrivially. First suppose that $\gamma$ is irrational. Then so is $N\gamma$, so by the minimality of irrational rotations, there exists $M$ such that the finite sequence $0,N\gamma,\ldots,MN\gamma$ is $\delta$-dense in $\RR/\ZZ$. Now let $n,n+N,\ldots,n+MN$ be an arithmetic progression of length $M$ and gap size $N$ contained in $T(\omega)$. Then $n\gamma,n\gamma + N\gamma,\ldots,n\gamma + MN\gamma$ is also $\delta$-dense in $\RR/\ZZ$, and in particular must contain an element of $(-\delta,\delta)$. So $(n+iN)\gamma \in (-\delta,\delta)$ mod 1 for some $i = 0,\ldots,M$. But then $n+iN \in A(\gamma,\delta) \cap T(\omega)$.

    On the other hand, suppose that $\gamma$ is rational, say $\gamma = p/q$. Then $A(\gamma,\delta)$ contains the infinite arithmetic progression $q\NN$, which by assumption intersects $T(\omega)$ nontrivially.

    Thus every Bohr set intersects $T(\omega)$ nontrivially, so $E(\omega)$ does not contain any Bohr set, i.e. $n + \pi(\omega) \notin E_0$. So $A \sub \RR\butnot E_0$, completing the proof.
\end{proof}

\newpage

\section*{Acknowledgements}

EN and DS were supported by EPSRC Programme Grant: EP/J018260/1. EN wishes to thank Dmitry Dolgopyat for suggesting to use games to prove \cite[Theorem 1]{Dolgopyat} during the 2016 ``Dimension and Dynamics'' semester at ICERM.
\\

\bibliographystyle{amsplain}

\providecommand{\bysame}{\leavevmode\hbox to3em{\hrulefill}\thinspace}
\providecommand{\MR}{\relax\ifhmode\unskip\space\fi MR }
\providecommand{\MRhref}[2]{%
  \href{http://www.ams.org/mathscinet-getitem?mr=#1}{#2}
}
\providecommand{\href}[2]{#2}

\bibliography{bibliography}

\vspace{8mm}

\begin{minipage}{0.4\textwidth}
\textbf{Dzmitry Badiziahin}\\
School of Mathematics and Statistics F07\\
University of Sydney NSW 2006\\
Australia\\
\url{dzmitry.badziahin@sydney.edu.au}\\
\end{minipage}
\hspace{8ex}
\begin{minipage}{0.4\textwidth}
\textbf{Stephen Harrap}\\
Department of Mathematical Sciences\\
Durham University\\
Lower Mountjoy\\
Stockton Road\\
Durham DH1 3LE\\
United Kingdom\\
\url{s.g.harrap@durham.ac.uk}\\
\end{minipage}

\vspace{3mm}

\begin{minipage}{0.4\textwidth}
\textbf{David Simmons}\\
Department of Mathematics\\
University of York\\
Heslington\\
York YO10 5DD\\
United Kingdom\\
\url{david9550@gmail.com}\\
\end{minipage}
\hspace{8ex}
\begin{minipage}{0.4\textwidth}
\textbf{Erez Nesharim}\\
Department of Mathematics\\
University of York\\
Heslington\\
York YO10 5DD\\
United Kingdom\\
\url{erez.nesharim@york.ac.uk}\\
\end{minipage}

\end{document}